\newcommand{\E}{\mathbb{E}}
\newcommand{\R}{\mathbb{R}}
\newcommand{\Z}{\mathbb{Z}}
\newcommand{\T}{\mathbb{T}}
\newcommand{\un}{\mathbbm{1}}
\newcommand{\PP}{\mathbb{P}}
\newcommand{\bP}{\textbf{P}}
\newcommand{\Q}{\mathbb{Q}}
\renewcommand{\P}{\mathbb{P}}
\newcommand{\FEP}{\textsc{fep}}
\newcommand{\SSEP}{\textsc{ssep}}
\newcommand\ent[1]{\lfloor #1 \rfloor} 
\newcommand\entc[1]{\lceil #1 \rceil}
\newtheorem{theorem}{Theorem}[section]
\newtheorem{lemma}[theorem]{Lemma}
\newtheorem{proposition}[theorem]{Proposition}
\newtheorem{corollary}[theorem]{Corollary}
\newtheorem{definition}{Definition}[section]
\newtheorem{remark}{Remark}[section]
\definecolor{MSBlue}{rgb}{.15,.0.35,.85}
\newcommand{\eqs}[1]{\begin{equation}#1\end{equation}}
\newcommand{\eq}[2]{\begin{equation}\label{#1}#2\end{equation}}
\title{Cutoff for the mixing time of the Facilitated Exclusion Process}
\author{Brune Massoulié}
\date{}
\begin{document}
\maketitle
\begin{abstract}
We compute the mixing time of the Facilitated Exclusion Process (FEP) and obtain cutoff and pre-cutoff in different regimes. The main tool to obtain this result is a new bijective, deterministic mapping between the joint law of an ergodic FEP and its current through the origin, and the joint law of a Simple Symmetric Exclusion Process (SSEP) and its current through the origin. This mapping is interesting in itself, as it remains valid in the non-ergodic regime where it gives a coupling between the position of a tagged particle in the FEP and the current through the origin in a SSEP with traps.
\end{abstract}

\section{Introduction}
The Facilitated Exclusion Process (FEP) is an interacting particle system on a lattice that was introduced in the physics literature by \cite{rossi_universality_2000, basu_activeabsorbing-state_2009} as a model with an active-absorbing-state phase transition. It is defined as follows: there is at most one particle per lattice site and each particle tries to jump to a neighbouring site at rate 1. The jump is not allowed if the target site is occupied (exclusion constraint), or if the particle is isolated, i.e. it has no nearest neighbour occupied site (kinetic constraint). The kinetic constraint makes the FEP's behaviour very different from that of the Simple Symmetric Exclusion Process (SSEP), which is defined similarly but without the kinetic constraint. Indeed, unlike the SSEP, the FEP is not reversible with respect to product measures, non-attractive and has transient configurations and absorbing states (when no particles have neighbours, the system freezes). 

The FEP has mostly been studied on one-dimensional lattices, such as $\Z$ \cite{baik_facilitated_2018, zhao_invariant_2019, goldstein_stationary_2022,erignoux_stationary_2023}, a closed segment \cite{ayre_mixing_2024}, a segment connected to reservoirs \cite{da_cunha_hydrodynamic_2024} and the discrete circle $\T_N:= \Z/N\Z$ \cite{gabel_facilitated_2010,blondel_hydrodynamic_2020,blondel_stefan_2021, ayre_mixing_2024, erignoux_cutoff_2024}.
In this article we will study the mixing time for the FEP on a discrete circle. A first important observation is that,
depending on the number $K$ of particles,  the FEP on $\T_N$ has very different long-time behaviours :
\begin{itemize}
\item  if $K \le \frac{N}{2}$ we say the FEP is \emph{subcritical}. In this case all particles end up becoming isolated and therefore the system ultimately becomes frozen, i.e.  no jumps are possible anymore;
\item if $K > \frac{N}{2}$ we say the FEP is \emph{supercritical}. In this case the system never reaches a frozen configuration, but at some point each empty site becomes surrounded by particles and this property is preserved forever: we then say the system has reached its ergodic component. This is due to the fact that the dynamics can separate 2 neighbouring empty sites but not make them join each other (this would require a jump of an isolated particle).
\end{itemize}

Our main result is to show cutoff for the mixing time of a supercritical FEP on $\T_N$. More precisely, taking $N$ to infinity the $\varepsilon$-mixing time (namely the time needed for a Markov chain to be $\varepsilon$-close to its invariant measure) does not depend  at first order on $\varepsilon$ (see Theorem \ref{theo1}).
This means that the worst total variation distance to equilibrium as a function of time goes very abruptly from almost 1 to almost 0 around the mixing time. The system stays far from equilibrium until just before the mixing time, then becomes almost indistinguishable from equilibrium after a short window of time, which is dominated by the mixing time. Such an abrupt convergence has been viewed as a time-reverse of escape behaviour from some metastable chains \cite{barrera_abrupt_2009}.
This type of phenomena was first found by \cite{diaconis_generating_1981,aldous_shuffling_1986} in the context of card shuffling, and was since shown in many Markov chains (see for an introduction \cite[Chapter 18]{levin_markov_2017}). We mention in particular that cutoff for the SSEP on the circle was shown in \cite{lacoin_simple_2017}, as we will use some of its tools in this article. 
Most previous works on cutoff concern irreducible chains and cutoff is proved by estimating precisely the mixing time of each model. 
A more general theory was introduced recently \cite{salez_cutoff_2023,salez_modern_2025} to prove the occurence of cutoff without needing to compute precisely the mixing time, but this is only valid under curvature assumptions and for irreducible chains. Since the FEP is not irreducible, it is necessary to study precisely its mixing time to show cutoff, which is quite challenging since one must control the \emph{transience time}, meaning the time needed to leave the transient component.

The mixing time of the FEP was previously studied in  \cite{ayre_mixing_2024}  and \cite{erignoux_cutoff_2024}. In \cite{ayre_mixing_2024}  pre-cutoff for the mixing time, and cutoff if restricted to the ergodic component were proved for the FEP on the segment.
For the FEP on the circle,  \cite{ayre_mixing_2024} showed the mixing time is of order $N^2\log N$ with some conditions on the initial configuration. 
In these pioneer results, controlling the transience time was a major issue. In \cite{erignoux_cutoff_2024}, extending to the circle a mapping developed in \cite{ayre_mixing_2024} for the segment, the transience time of the FEP on the circle was precisely computed, which allowed to generalise the mixing time result of \cite{ayre_mixing_2024} to all initial configurations. 
Moreover, \cite{erignoux_cutoff_2024} proves cutoff for the transience time and computes the transience time as a function of the number of particles. 
Our result is a great improvement compared to  what was already known, since we are able to compute the mixing time precisely enough to obtain cutoff: instead of showing that the mixing time is $\mathcal{O}_\varepsilon(N^2\log N)$, we show it is $\frac{N^2}{4\pi^2}\log N (1+o_\varepsilon(1))$. We achieve this by combining estimates on the transience time from \cite{erignoux_cutoff_2024} and a new estimate on the mixing time once in the ergodic component. As was observed for the SSEP with traps in \cite{erignoux_cutoff_2024}, the times spent in the transient phase and in the ergodic component are often not of the same order, so we do not lose too much precision by studying these two phases separately. 
 
An account of our main results and ideas follows. First we prove that the worst mixing time over all initial supercritical configurations exhibits cutoff (see Theorem \ref{theo1}). We also study the mixing time as a function of the number of particles $K$, and obtain bounds that imply cutoff in certain regimes, such as the close-to-critical regime (e.g. $K=\frac{N}{2} + \log N$) and the macroscopically supercritical regime (e.g. $K = \rho N, \, \frac12<\rho<1$), and that imply pre-cutoff in the intermediate case (e.g. $K=\frac{N}{2} + N^\alpha, \,0<\alpha<1$) (see Theorem \ref{th:diff_reg}). Our main tool is a new bijective mapping from the pair of an ergodic FEP and its current through the origin, to the pair of a SSEP and its current through the origin. This is a natural extension of the mapping introduced in \cite{erignoux_cutoff_2024}. By using this mapping together with a height function representation of the SSEP and its current, and adapting the work of \cite{lacoin_simple_2017}, we obtain sharp results on the mixing time of FEP started in the ergodic component. Then we combine this result with the sharp results on the transience time we obtained in \cite{erignoux_cutoff_2024} to get our Theorems \ref{theo1} and \ref{th:diff_reg}.
 
The use of mappings to study the FEP is not novel. We recall the mapping to a zero-range process \cite{basu_activeabsorbing-state_2009, blondel_hydrodynamic_2020}, the interpretation of the ergodic FEP as an exclusion process with objects of size 2 \cite{gabel_facilitated_2010, ayre_mixing_2024}, more recently a lattice path representation \cite{ayre_mixing_2024}, and a mapping to a SSEP with traps \cite{erignoux_cutoff_2024} which is the particle system version of the latter. These processes are more convenient to study than the FEP, mainly because the zero-range process and the SSEP with traps are attractive. The novelty of our mapping is that it is bijective. Let us emphasize that our mapping, besides being a key tool for the study of the mixing time, has other interesting consequences. For instance, by Corollary \ref{cor:mapc}, the position of a tagged particle in the ergodic FEP can be exactly coupled to the current through the origin in an associated SSEP. More generally, the position of a tagged particle in the FEP can be deterministically mapped to the current through the origin of a SSEP with traps. This mapping remains valid by changing jump rates (for example in the asymmetric setting), and can also be defined on the full integer line, so we expect our method to be useful for studying the FEP in broader settings. However, as for the mappings mentioned above, it would be more difficult to do this construction in a non-conservative setting (e.g. a FEP in contact with reservoirs \cite{da_cunha_hydrodynamic_2024}). 

An interesting perspective would be to try and show cutoff in the intermediate case, where we only showed pre-cutoff. In this regime, the transience time and the ergodic mixing time both have the same order, so one would need to understand well the distribution of the FEP once it leaves the transient component to know if it has already started mixing. A challenge for this is that the worst initial conditions for the transience time and the mixing time are not known, although it was conjectured in \cite[Remark 1]{erignoux_cutoff_2024} that a step initial condition should be the worst. Without knowing this, one must control the process started from any initial condition. 

Another extension would be to study more precise notions of cutoff, namely the cutoff window or the cutoff profile, defined for example in \cite[Section 2.2]{barrera_thermalisation_2020}. One could search for sharp windows for the cutoff times, meaning the timescale on which the distance to equilibrium decreases from $1-\varepsilon$ to $\varepsilon$. For the transience time, it was conjectured in \cite[Remark 1]{erignoux_cutoff_2024} that the true window should be of order $N^2$, which would be the case if the worst initial condition was the conjectured one. Such a result would also improve the window for the mixing time. Going even further, one could aim to study cutoff profiles, meaning the shape of the total variation distance curve around the mixing time, rescaling time by the window size. Although ever more results of this kind are being discovered, they are very difficult to obtain in general, see \cite{teyssier_every_2025} for an introduction to this field. However, since this was done for the SSEP on the circle in \cite{lacoin_cutoff_2016}, perhaps using our bijective mapping one could hope for such a result, at least in the ergodic regime. This is by no means evident and would require careful control of the links between the two distributions. 

\subsection*{Notation and conventions}
In this article, we will work with three different processes, that can be mapped into each other: the Facilitated Exclusion Process (FEP), the Simple Symmetric Exclusion Process (SSEP) and the Corner-Flip Dynamics (CFD). Although, thanks to the mappings we will introduce, these three processes can be defined on the same probability space, we will use different notations for their distributions, to highlight which process we are working on. We will also choose different typical names for the configurations and the variables according to which process we are looking at. These conventions are summarised below:
\begin{center}
\def\arraystretch{1.4}
\begin{tabular}{|l|c|c|c|}
\hline
Process & FEP & SSEP & CFD \\
\hline
Total number of sites &$N$ & $K$& $K$ \\
Position of a site & $x$ & $k$ & $k$ \\
Number of particles & $ K$ & $P$ & $P$ uphill slopes \\ 
Typical configuration name & $\eta=(\eta_x)_{x\in \T_N}$ & $\sigma=(\sigma_k)_{k\in\T_K} $ & $\zeta=(\zeta_k)_{k \in \T_K}$  \\
Distribution of the process  & ${\PP}_\eta$  & $\bP_\sigma$ & $\mathbb{Q}_\zeta$ \\
\hline
\end{tabular}
\end{center}
Here are some other conventions we will use:
\begin{itemize}
\item We will write the elements of $\T_N := \Z/N\Z$ as $\{1, ..., N\}$ so we start at $1$ and finish at $N$.
\item We will often consider intervals of sites of the periodic lattice. Throughout the article, these intervals are considered clockwise, more precisely, for $x,y\in \T_N$, if $1\le y < x \le N$, then $[x,y] = \{x, x+1, ... N\} \cup \{1, ... y\}$. It should be clear from context whether we are working modulo $N$ or $K$, so we will not explicit this size parameter in the clockwise intervals. 
\item We will write $f(N) = \mathcal{O}_{\varepsilon}\left(g(N)\right)$ to indicate that there exists a constant $C_\varepsilon >0$, depending on $\varepsilon$, such that for all $N$, $f(N) \le C_\varepsilon g(N)$.
\end{itemize}

\section{Model and results}
We study the Facilitated Exclusion Process (FEP) on the discrete circle $\T_N = \{1,... N\}$. It is a particle system with an exclusion constraint, so on each site there is at most 1 particle. Therefore, its configurations belong to $\Gamma_N := \{0,1\}^{\T_N}$, and we denote a configuration by $\eta = (\eta_x)_{x \in \T_N}$, where $\eta_x = 1$ if there is a particle at site $x$ and $\eta_x=0$ if site $x$ is empty. 

The FEP has the following dynamics: each particle tries jumping at rate 2 to the left or to the right with probability $\frac12$ (equivalently, each jump direction is attempted at rate 1). However, the jump is cancelled if one of the two following constraints is not satisfied:
\begin{itemize}
\item The exclusion constraint: if the target site is already occupied, the jump is forbidden.
\item The kinetic constraint: if, before the jump, the particle is isolated (meaning it has no neighbour), the jump is forbidden.
\end{itemize}

In other words, the FEP on $\T_N$ is the continuous time Markov process on $\Gamma_{N}$ with generator given by
\eqs{\mathcal{L}_N^{\FEP}f(\eta) = \sum_{x \in \T_N}\sum_{z = \pm 1}\eta_x \eta_{x-z} (1 - \eta_{x+z}) (f(\eta^{x,x+z}) - f(\eta)),}
where
\begin{equation} \label{eq:swap-fep}
\eta_y^{x,x+z} =
    \begin{cases}
        \eta_y \hbox{ if } y \notin \{x,x+z\}\\
        \eta_{x+z} \hbox{ if } y=x \\
        \eta_x \hbox{ if } y = x+z,
    \end{cases}       
    \end{equation}
which corresponds to a jump from site $x$ to $x+z$ if $\eta_{x-z}\eta_x(1-\eta_{x+z})=1$. 

Furthermore, the system is conservative (the number of particles is preserved by the dynamics), so it will be convenient to study the FEP for a given number of particles $K$. We thus define the set of exclusion configurations on $\T_N$ with $K$ particles: 
\eq{eq:def_gammaNK}{\Gamma_{N,K} = \{\eta \in \{0,1\}^{\T_N} : |\eta| = K\}.}

Depending on the number of particles $K$, the FEP has different long-time behaviours. If $K \le \frac{N}{2}$ (the subcritical case), the system ends up becoming frozen: all particles are isolated and no jumps are possible. On the other hand, if $K > \frac{N}{2}$ (the supercritical case), the system never freezes, but eventually reaches an absorbing set of configurations, the ergodic component (the set of configurations with every empty site surrounded by particles). 

Therefore, the FEP is not irreducible and has transient states: in the subcritical case, they are the configurations that are not frozen, and in the supercritical case, they are the configurations that are not ergodic (with at least one pair of neighbouring empty sites). 

In this article, we study the FEP's mixing time, and therefore focus on the regime $K > \frac{N}{2}$, which is the only one where a non frozen stationary state is reached. We define, for $K > \frac{N}{2}$,
\eqs{\mathcal{E}_{N,K} = \{\eta \in \Gamma_{N,K} : \forall x \in \T_N, \eta_x + \eta_{x+1} \ge 1\}}
the set of ergodic configurations of the FEP. Set also 
\eqs{\mathcal{T}_{N,K} = \Gamma_{N,K} \setminus \mathcal{E}_{N,K}}
the set of transient configurations of the FEP on $\T_N$ with $K$ particles. 
Then, the invariant law of the FEP on $\T_N$ with $K$ particles is given by
\eqs{\pi_{N,K}^{\FEP} = \mathcal{U}(\mathcal{E}_{N,K}),}
with $\mathcal{U}(A)$ denoting the uniform measure over a set $A$.

For $\eta \in \Gamma_{N,K}$, we denote by $\P_\eta(\eta(t) \in \cdot)$ the distribution of the FEP started from $\eta$ at time $t$. We also write $d_{\textsc{tv}}$ for the total variation distance. Then we define for all $\varepsilon \in (0,1)$ the $\varepsilon$-mixing time 
\eq{eq:deftmix}{\tau^{\FEP}_{N,K}(\varepsilon) = \inf \{t \ge 0 : \forall \eta \in \Gamma_{N,K}, d_{\textsc{tv}}\left(\P_\eta(\eta(t) \in \cdot), \pi_{N,K}^{\FEP}\right) \le \varepsilon \}, }
the first time such that for any initial configuration $\eta\in\Gamma_{N,K}$, the distribution of $\eta(t)$ is at distance less than $\varepsilon$ from $\pi_{N,K}^\FEP$. 
Our main results concern the behaviour of $\tau^{\FEP}_{N,K}(\varepsilon)$ as $N$ and $K$ go to infinity. We define here the cutoff and pre-cutoff phenomena.
\begin{definition}[Cutoff and pre-cutoff] A sequence of mixing times $(\tau_N)_{N\ge 0}$ exhibits \emph{cutoff} if
\eqs{\forall \varepsilon \in (0,1), \, \lim\limits_{N\to \infty} \frac{\tau_N(\varepsilon)}{\tau_N(\frac{1}{4})} = 1.}
A sequence of mixing times $(\tau_N)_{N\ge 0}$ exhibits \emph{pre-cutoff} if there exist $C, C' >0$ such that
\eqs{\forall \varepsilon \in (0,1), \, C \le \varliminf\limits_{N\to \infty} \frac{\tau_N(\varepsilon)}{\tau_N(\frac{1}{4})}\le \varlimsup\limits_{N\to \infty} \frac{\tau_N(\varepsilon)}{\tau_N(\frac{1}{4})} \le C'.}
\end{definition}
In the cutoff phenomenon, at first order, the mixing time does not depend on $\varepsilon$. The times for the total variation distance to decrease to $1-\varepsilon$ or to $\varepsilon$ are equivalent, so that the total variation distance falls abruptly from almost $1$ to almost $0$ in a small window around $\tau_N(\frac{1}{4})$, which is taken as a reference mixing time. The choice $\frac{1}{4}$ is often made for the reference mixing time, see for example \cite[Section 4.5]{levin_markov_2017}, but is not very important for the definition of cutoff. Pre-cutoff is a weaker property. We now state our first result.
\begin{theorem}[$K$-uniform cutoff for the mixing time]\label{theo1}
For all $\varepsilon \in (0,1)$, there exists $C_\varepsilon >0$ such that for all $N$, 
\eqs{\left|\max\limits_{\frac{N}{2}< K < N} \tau^{\FEP}_{K,N}(\varepsilon) - \frac{1}{4\pi^2}N^2 \log N\right| \le C_\varepsilon N^2\log\log N.}
\end{theorem}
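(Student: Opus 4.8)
The plan is to decompose the dynamics into the time needed to reach the ergodic component $\mathcal{E}_{N,K}$ and the subsequent mixing inside it. Write $\tau^{\mathrm{erg}}_{N,K}(\varepsilon)$ for the mixing time obtained by restricting, in the definition of $\tau^{\FEP}_{N,K}(\varepsilon)$, the supremum over initial conditions to $\eta \in \mathcal{E}_{N,K}$, and let $T_{N,K}(\varepsilon)$ be the $\varepsilon$-transience time, whose sharp behaviour is provided by \cite{erignoux_cutoff_2024}. For the upper bound I would split time as $t = s+u$ and use the elementary inequality $d_{\textsc{tv}}(\P_\eta(\eta(s+u)\in\cdot),\pi^{\FEP}_{N,K}) \le \P_\eta(\eta(s)\notin\mathcal{E}_{N,K}) + \max_{\eta'\in\mathcal{E}_{N,K}} d_{\textsc{tv}}(\P_{\eta'}(\eta(u)\in\cdot),\pi^{\FEP}_{N,K})$, which gives $\tau^{\FEP}_{N,K}(\varepsilon) \le T_{N,K}(\varepsilon/2) + \tau^{\mathrm{erg}}_{N,K}(\varepsilon/2)$. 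Conversely, since $\pi^{\FEP}_{N,K}$ is supported on $\mathcal{E}_{N,K}$ and ergodic configurations remain ergodic, restricting the supremum to $\mathcal{E}_{N,K}$ yields the matching lower bound $\tau^{\FEP}_{N,K}(\varepsilon) \ge \tau^{\mathrm{erg}}_{N,K}(\varepsilon)$. The statement therefore reduces to a $K$-uniform estimate of $\tau^{\mathrm{erg}}_{N,K}$, together with the bound $T_{N,K}(\varepsilon) = \mathcal{O}_\varepsilon(N^2\log\log N)$ over the whole range of $K$.

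To control $\tau^{\mathrm{erg}}_{N,K}$ I would push the process through the bijective mapping described in the introduction, sending the pair (ergodic \FEP, current through the origin) to a pair (\SSEP, current), and then to the associated corner-flip (CFD) height function, an interface on $K$ sites carrying $N-K$ up-slopes. The analysis then follows \cite{lacoin_simple_2017}: the lower bound comes from Wilson's method applied to the slowest Fourier mode of the corner-flip generator, whose eigenvalue is of order $N^{-2}$ (recall $K \asymp N$), so the relaxation time is of order $N^2$; the matching upper bound comes from a grand coupling of the height functions together with a censoring argument and an $L^2$ estimate. At fixed $K$ this produces cutoff, with the mixing time of the form (relaxation time, of order $N^2$) times a logarithmic factor coming from the extremal initial interface, of order $\log$ of the minority slope count $\min(N-K,\,2K-N)$, and with the $\varepsilon$-dependence confined to the diffusive window, hence absorbed into the error term.

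The heart of the proof, and the step I expect to be the main obstacle, is the optimisation over $K$, which also pins down the $\log\log N$ scale of the error. The relaxation time grows with $K$, while the logarithmic factor $\log\min(N-K,\,2K-N)$ collapses both as $K \to N/2$ and as $K \to N$; balancing the two places the maximiser at a deficit $N-K$ of order $N/\log N$, where $\log(N-K)=\log N-\log\log N+\mathcal{O}(1)$. The maximal value is thus $\frac{1}{4\pi^2}N^2\log N-\Theta(N^2\log\log N)$: the leading term is never exceeded and is attained up to an intrinsic $N^2\log\log N$ correction. The two-sided assumption $1\ll a_N\ll\log\log N$ is what makes this legitimate uniformly: $a_N\gg1$ keeps the system supercritical enough that the transience time $T_{N,K}$—largest near criticality—stays $\mathcal{O}_\varepsilon(N^2\log\log N)$ and the interface carries enough slopes for the asymptotics to apply, while $a_N\ll\log\log N$ ensures the trimmed windows near $K=N/2$ and $K=N$ remain thin enough to retain the near-optimal values of $K$ without introducing corrections beyond order $N^2\log\log N$. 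The genuinely hard part is making each of these estimates uniform in $K$ with a truly $\mathcal{O}(N^2\log\log N)$ error: the Lacoin-type bounds, cleanest at a fixed macroscopic slope density, must be transported with summable errors across the whole range as $N-K$ runs from $\Theta(N)$ down to $a_N$, and then matched against the transience-time asymptotics of \cite{erignoux_cutoff_2024} at the same precision, so that the two contributions combine into the announced $\frac{1}{4\pi^2}N^2\log N + \mathcal{O}_\varepsilon(N^2\log\log N)$.
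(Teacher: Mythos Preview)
Your decomposition into transience time plus ergodic mixing time is exactly the paper's Proposition~\ref{prop:easy}, and your plan to control the ergodic mixing time via the bijective mapping to a \SSEP\ height function and Lacoin's techniques is also the paper's route (Propositions~\ref{prop:UB} and~\ref{prop:LB}). The gap is in how you weigh the two pieces.

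You assert that $T_{N,K}(\varepsilon)=\mathcal{O}_\varepsilon(N^2\log\log N)$ uniformly over $\frac{N}{2}+a_N\le K\le N-a_N$, and then attribute the leading term $\frac{1}{4\pi^2}N^2\log N$ entirely to the ergodic mixing time, optimised near $N-K\asymp N/\log N$. Both claims are wrong. By Proposition~\ref{prop:t}, the transience time satisfies $\theta_{N,K}(\varepsilon)=\frac{K^2}{\pi^2}\log\frac{K}{2K-N}+\mathcal{O}_\varepsilon(K^2\log\log K)$; at $K=\frac{N}{2}+a_N$ this is $\frac{N^2}{4\pi^2}\log N+\mathcal{O}(N^2\log\log N)$, not $\mathcal{O}(N^2\log\log N)$. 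To force $\theta_{N,K}$ down to order $N^2\log\log N$ you would need $2K-N\ge K/(\log K)^{C}$, i.e.\ $a_N$ at least $N/\mathrm{polylog}\,N$, far larger than the hypothesis allows. Meanwhile the ergodic mixing time is at most $\frac{K^2}{8\pi^2}\log\big((2K-N)\wedge(N-K)\big)+\mathcal{O}_\varepsilon(K^2)\le \frac{N^2}{8\pi^2}\log N+\mathcal{O}_\varepsilon(N^2)$ uniformly, so it can never produce the constant $\frac{1}{4\pi^2}$.

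In the paper the maximum over $K$ is achieved at the \emph{smallest} admissible $K$, namely $K=\frac{N}{2}+a_N$, and the leading term is precisely the transience time there; the ergodic contribution is absorbed into the $\mathcal{O}_\varepsilon(N^2\log\log N)$ window. The condition $a_N\ll\log\log N$ is what keeps this near-critical $K$ inside the range (so that $\log a_N=o(\log\log N)$ and the transience lower bound hits $\frac{N^2}{4\pi^2}\log N$), while $a_N\to\infty$ is needed only so that $(2K-N)\wedge(N-K)\to\infty$ and Proposition~\ref{prop:UB} applies. Your reading of the role of $a_N$ is therefore inverted as well.
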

This is an estimate on the mixing time starting from the worst configuration, over all possible values of $K$. We establish that this worst mixing time is asymptotically equivalent to $\frac{1}{4\pi^2}N^2 \log N$, which does not depend on $\varepsilon$, and therefore this sequence of times exhibits cutoff.
\begin{remark}Interestingly, the dominant term of this worst mixing time is exactly the critical FEP's transience time from \cite{erignoux_cutoff_2024}. In fact, the worst mixing time is achieved in the close-to-critical regime, when $K-N/2$ is small, and the transience time is very long and dominates.
\end{remark}

\begin{remark}
Notice that this worst mixing time uniform in $K$ is not the most usual notion of mixing time: mixing times of particle systems are often studied for a sequence of systems of size $N$ with $K(N)$ particles as $N$ goes to infinity. 
The notion used in Theorem \ref{theo1} is analogous to the worst case mixing time shown for the SSEP with traps in \cite[Theorem 1.3]{erignoux_cutoff_2024} and allows for a compact formulation. 
\end{remark} 

The following theorem addresses the mixing time as a function of $K$.
\begin{theorem}[Cutoff and pre-cutoff as a function of $K$] \label{th:diff_reg}
For all $\varepsilon \in (0,1)$, for all sequence $K=K(N)$ such that for all $N$, $N/2 < K < N$, 
\begin{itemize}
\item If $\frac{\log (2K-N)}{\log K} \longrightarrow 0$ (e.g. $K = N/2 + \log N$), then 
\eqs{\left|\tau^{\FEP}_{N,K}(\varepsilon)-\frac{1}{\pi^2}K^2 \log K\right| =\mathcal{O}_\varepsilon\left( N^2 \max\left(\log\log N,  \log (2K-N)\right)\right).}
\item If $\frac{\log (2K-N)}{\log K} \longrightarrow \alpha \in (0,1)$ (e.g. $K = N/2 + N^\alpha$), then
\eqs{\max\Big(\frac{1-\alpha}{\pi^2},\frac{\alpha}{8\pi^2}\Big) \le \varliminf\limits_{N \to \infty} \frac{\tau^{\FEP}_{N,K}(\varepsilon)}{K^2 \log K} \le \varlimsup\limits_{N \to \infty} \frac{\tau^{\FEP}_{N,K}(\varepsilon)}{K^2 \log K} \le \frac{1-\alpha}{\pi^2} + \frac{\alpha}{8 \pi^2}.}
\item If $\frac{\log (2K-N)}{\log K} \longrightarrow 1$ and $(N-K)\to \infty$  (e.g. $K/N \longrightarrow \rho \in (\frac12,1)$), then, setting $P=\min(2K-N,N-K)$,
\eqs{\left|\tau^{\FEP}_{N,K}(\varepsilon)- \frac{1}{8\pi^2}K^2 \log P\right| =\mathcal{O}_\varepsilon \left(K^2\left(1+\log\frac{K}{2K-N}\right)\right).}
\end{itemize}
\end{theorem}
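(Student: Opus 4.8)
The plan is to read $\tau^{\FEP}_{N,K}(\varepsilon)$ as the sum of two essentially independent contributions and to control each through the bijective mapping. I would split the dynamics into a \emph{transient phase}, during which the chain leaves the transient set $\mathcal{T}_{N,K}$ and hits the ergodic component $\mathcal{E}_{N,K}$, and an \emph{ergodic phase}, during which it mixes inside $\mathcal{E}_{N,K}$. The ergodic phase is where the mapping is used: through it, the $\FEP$ restricted to $\mathcal{E}_{N,K}$ (together with its current) is identified with a $\SSEP$ on the smaller circle $\T_K$ carrying $P=N-K$ particles, whose minority species therefore has cardinality $(2K-N)\wedge(N-K)$—precisely the quantity organizing the three regimes. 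Since the mapping is deterministic and bijective, it sends the uniform measure on $\mathcal{E}_{N,K}$ to the uniform $\SSEP$ measure, so $\pi^{\FEP}_{N,K}$ is transported exactly.

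The two quantitative inputs are then as follows. For the ergodic phase, I would transport the cutoff for the $\SSEP$ on $\T_K$ back to the $\FEP$, obtaining mixing at time $T_{\mathrm{mix}}^{\SSEP}\approx\frac{1}{8\pi^2}K^2\log\left((2K-N)\wedge(N-K)\right)$; the $\SSEP$ cutoff itself is obtained by adapting the height-function (corner-flip) analysis of \cite{lacoin_simple_2017} to the circle, the constant $\frac{1}{8\pi^2}$ reflecting the spectral gap $\approx 4\pi^2/K^2$ of the random walk on $\T_K$. For the transient phase I would invoke the sharp transience time of \cite{erignoux_cutoff_2024}, in the form $T_{\mathrm{trans}}\approx\frac{1}{\pi^2}K^2\log\frac{K}{(2K-N)\wedge(N-K)}$, which also exhibits cutoff. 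One checks directly that in the close-to-critical regime $T_{\mathrm{trans}}$ dominates and equals $\frac{1}{\pi^2}K^2\log K$ up to lower order, whereas in the macroscopically supercritical regime $T_{\mathrm{mix}}^{\SSEP}$ dominates and equals $\frac{1}{8\pi^2}K^2\log K$, while in the intermediate regime the two are comparable, with $T_{\mathrm{trans}}\approx\frac{1-\alpha}{\pi^2}K^2\log K$ and $T_{\mathrm{mix}}^{\SSEP}\approx\frac{\alpha}{8\pi^2}K^2\log K$.

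For the upper bound I would run the chain for $(1+o(1))T_{\mathrm{trans}}$, so that by the transience cutoff it lies in $\mathcal{E}_{N,K}$ with probability $1-o(1)$; applying the strong Markov property at the hitting time of $\mathcal{E}_{N,K}$ together with the worst-case $\SSEP$ mixing bound—which is insensitive to the entrance law—an additional time $(1+o(1))T_{\mathrm{mix}}^{\SSEP}$ brings the conditional law within $\varepsilon$ of $\pi^{\FEP}_{N,K}$, and a triangle inequality yields $\tau^{\FEP}_{N,K}(\varepsilon)\le(1+o(1))\left(T_{\mathrm{trans}}+T_{\mathrm{mix}}^{\SSEP}\right)$; this gives all three upper bounds, in particular the sum $\frac{1-\alpha}{\pi^2}+\frac{\alpha}{8\pi^2}$ in the intermediate regime. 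For the lower bound I would combine two estimates: since $\pi^{\FEP}_{N,K}$ is supported on $\mathcal{E}_{N,K}$, one has $d_{\textsc{tv}}\ge\Prob_\eta(\eta(t)\in\mathcal{T}_{N,K})$, so the transience lower bound forces $\tau^{\FEP}_{N,K}(\varepsilon)\ge(1-o(1))T_{\mathrm{trans}}$; and a Wilson-type estimate on the slowest mode (transported through the mapping) yields an independent spectral bound. In the two extreme regimes one timescale dominates and its own cutoff forces cutoff for the $\FEP$; in the intermediate regime the two are comparable and only pre-cutoff survives.

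The main obstacle is the lower bound in the intermediate regime, and specifically the exponent $1-\alpha/2$. Taking the naive maximum of the transience bound and the ergodic $\SSEP$ bound only produces $\frac{1-\alpha}{\pi^2}\vee\frac{\alpha}{8\pi^2}$, which is too weak. To reach $\frac{1-\alpha/2}{8\pi^2}$ one must run the Wilson estimate over the \emph{entire} evolution, started from the genuinely transient worst-case configuration with all empty sites bunched together: there the slowest mode, which through the mapping relaxes at rate of order $K^{-2}$, has a macroscopic signal of order $K$, while its stationary fluctuations are only of order $\left((2K-N)\wedge(N-K)\right)^{1/2}$, and the ratio $K/((2K-N)\wedge(N-K))^{1/2}$ is exactly what produces $(1-\alpha/2)\log K$. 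Controlling the relaxation of this mode \emph{across} the transient phase—where the clean $\SSEP$ picture degenerates into the $\SSEP$-with-traps of \cite{erignoux_cutoff_2024} and the increments are no longer stationary—is the delicate point, and the residual mismatch between this lower bound and the additive upper bound is precisely what leaves only pre-cutoff in the intermediate regime.
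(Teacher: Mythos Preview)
Your overall architecture matches the paper's exactly: the sandwich
\[
\theta_{N,K}(\varepsilon)\vee\tau^{\FEP}_{\mathcal{E}_{N,K}}(\varepsilon)\;\le\;\tau^{\FEP}_{N,K}(\varepsilon)\;\le\;\theta_{N,K}(\varepsilon/2)+\tau^{\FEP}_{\mathcal{E}_{N,K}}(\varepsilon/2),
\]
the transience estimate from \cite{erignoux_cutoff_2024}, and the upper bound on the ergodic mixing time via the height-function coupling of \cite{lacoin_simple_2017} combine precisely as you describe to produce all three regimes. Two small slips: the mapped $\SSEP$ carries $2K-N$ particles, not $N-K$ (the $N-K$ are the empty sites, so the minority count $(2K-N)\wedge(N-K)$ is right but your label $P=N-K$ is not), and the transience time from \cite{erignoux_cutoff_2024} depends on $2K-N$ alone rather than on the minimum---though in the regimes of the theorem this distinction is immaterial since $2K-N\le N-K$ whenever $\log(2K-N)/\log K\to\alpha<1$.

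Where you diverge from the paper, and make your life unnecessarily hard, is the second lower bound. You propose to launch the Wilson estimate from the fully transient configuration with all empty sites bunched together and to track the slow mode \emph{across} the transient phase via the $\SSEP$-with-traps picture, which you rightly flag as delicate. The paper sidesteps this entirely. It takes as initial condition the \emph{ergodic} configuration
\[
\eta=\underbrace{\bullet\cdots\bullet}_{2K-N}\underbrace{\bullet\circ\bullet\circ\cdots\bullet\circ}_{2(N-K)},
\]
which under the clean bijection $\Phi$ becomes the $\SSEP$ configuration $\sigma(0)=\un_{[1,P]}$ on $\T_K$ with $P=2K-N$. A direct Fourier computation of $\E_\sigma\bigl[|\sigma_{|[1,P]}(t)|\bigr]$, combined with Cantelli's inequality and the link (Proposition~\ref{prop:link_fep_ssep}) between density fluctuations in the $\FEP$ and in the $\SSEP$, then yields
\[
\tau^{\FEP}_{\mathcal{E}_{N,K}}(\varepsilon)\;\ge\;\frac{K^2}{4\pi^2}\log\frac{K}{\sqrt{P}}+\mathcal{O}_\varepsilon(K^2),
\]
with no traps and no crossing of the transient regime. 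Since $\tau^{\FEP}_{N,K}\ge\tau^{\FEP}_{\mathcal{E}_{N,K}}$, this already furnishes the second branch of the $\vee$ in the intermediate case; the ``delicate point'' you identify is simply not needed.
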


The FEP dynamics can be decomposed in 2 phases: first the system needs to reach the ergodic component, this is the \emph{transience time}, then once ergodic the FEP continues evolving to become mixed, we call this the \emph{ergodic mixing time}. Our main contribution consists in estimating the ergodic mixing time. The three regimes above can then be understood in the following way. When $K$ is close to $N/2$, the transience time dominates the ergodic mixing time, and the cutoff follows from the transience time cutoff from \cite{erignoux_cutoff_2024}. When $K-N/2$ is for example a power of $N$, both times have the same order so we only obtain pre-cutoff. Last, when $K-N/2$ is for example a positive fraction of $N$, the ergodic mixing time dominates the transience time, and corresponds to a related SSEP's mixing time. Then the cutoff follows from adapting the proof of \cite{lacoin_simple_2017} for the cutoff of the SSEP.

\begin{remark}
The previous result proves cutoff and pre-cutoff for most behaviours of $K$. In the case where $(N-K)$ is bounded, we can show that the mixing time is $\mathcal{O}_{\varepsilon}(N^2)$. For this case, there may be no cutoff at all, as for the simple random walk or for the SSEP with a finite number of particles, see \cite[Remark 1.3]{lacoin_simple_2017}. Showing this for the FEP would require some additional control on the distribution when the system leaves the transient component, and is thus left for future work. 
\end{remark}

\section{Structure of the proof} \label{sec:not_struc}
For $N \ge 2$ and $N/2<K<N$, we set for all $\varepsilon \in (0,1)$,
\eqs{\theta_{N,K}(\varepsilon) = \inf \big\{t \ge 0 : \forall \eta \in \Gamma_{N,K}, \P_{\eta}(\eta(t) \in \mathcal{T}_{N,K}) \le \varepsilon\big\},}
the transience time of the FEP on $\T_N$ with $K$ particles. This is the first time such that for any initial configuration, the process has reached the ergodic component with probability greater than $1-\varepsilon$.
We also define
\eqs{\tau^{\FEP}_{\mathcal{E}_{N,K}}(\varepsilon) = \inf \big\{t \ge 0 : \forall \eta \in \mathcal{E}_{N,K}, d_{\textsc{tv}}(\P_\eta(\eta(t) \in \cdot), \pi_{N,K}^{\FEP}) \le \varepsilon \big\}}
the mixing time of a FEP started from $\mathcal{E}_{N,K}$, and call this the ergodic mixing time. 
This quantifies the time to converge to $\pi_{N,K}^{\FEP}$ when started from a configuration that is already ergodic, whereas the full mixing time $\tau^{\FEP}_{N,K}(\varepsilon)$ defined in \eqref{eq:deftmix} is the time such that, started from any initial configuration, including transient ones, the law of the process is sufficiently close to $\pi_{N,K}^{\FEP}$.

We first bound the full mixing time in terms of the transience time and the ergodic mixing time. 
\begin{proposition}\label{prop:easy} For all $\varepsilon \in (0,1), N \ge 2$ and $N/2 < K < N$,
\eq{eq:encadre}{\max\left(\theta_{N,K}(\varepsilon),\tau^{\FEP}_{\mathcal{E}_{N,K}}(\varepsilon)\right)\le \tau^{\FEP}_{N,K}(\varepsilon) \le \theta_{N,K}\left(\frac{\varepsilon}{2}\right) + \tau^{\FEP}_{\mathcal{E}_{N,K}}\left(\frac{\varepsilon}{2}\right).}
\end{proposition}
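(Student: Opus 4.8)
The plan is to prove the two inequalities separately. Both are elementary consequences of the structure of the total variation distance, with the upper bound resting on a two-phase ``restart'' decomposition through the Markov property at the end of the transience phase.

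For the lower bound $\theta_{N,K}(\varepsilon) \vee \tau^{\FEP}_{\mathcal{E}_{N,K}}(\varepsilon)\le \tau^{\FEP}_{N,K}(\varepsilon)$, I would fix any $t \ge \tau^{\FEP}_{N,K}(\varepsilon)$ and check that $t$ also satisfies the two defining conditions on the left. The bound $t \ge \tau^{\FEP}_{\mathcal{E}_{N,K}}(\varepsilon)$ is immediate, since $\mathcal{E}_{N,K} \subseteq \Gamma_{N,K}$ makes the condition defining $\tau^{\FEP}_{\mathcal{E}_{N,K}}$ a restriction of the one defining $\tau^{\FEP}_{N,K}$. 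For $t \ge \theta_{N,K}(\varepsilon)$, the key observation is that $\pi^{\FEP}_{N,K} = \mathcal{U}(\mathcal{E}_{N,K})$ charges no transient configuration, so testing the total variation distance against the single event $\{\eta(t) \in \mathcal{T}_{N,K}\}$ gives, for every $\eta \in \Gamma_{N,K}$,
\[
\P_\eta(\eta(t) \in \mathcal{T}_{N,K}) = \P_\eta(\eta(t) \in \mathcal{T}_{N,K}) - \pi^{\FEP}_{N,K}(\mathcal{T}_{N,K}) \le d_{\textsc{tv}}\big(\P_\eta(\eta(t) \in \cdot), \pi^{\FEP}_{N,K}\big) \le \varepsilon,
\]
which is exactly the condition defining $\theta_{N,K}(\varepsilon)$.

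For the upper bound I would set $t_1 = \theta_{N,K}(\varepsilon/2)$ and $t_2 = \tau^{\FEP}_{\mathcal{E}_{N,K}}(\varepsilon/2)$ and show the mixing condition holds at time $t_1 + t_2$. Writing $\mu := \P_\eta(\eta(t_1) \in \cdot)$ for the law at the end of the transience phase, the Markov property gives $\P_\eta(\eta(t_1 + t_2) \in \cdot) = \int \P_{\eta'}(\eta(t_2) \in \cdot)\, d\mu(\eta')$, and since $\pi^{\FEP}_{N,K} = \int \pi^{\FEP}_{N,K}\, d\mu(\eta')$, convexity of $d_{\textsc{tv}}$ yields
\[
d_{\textsc{tv}}\big(\P_\eta(\eta(t_1 + t_2) \in \cdot), \pi^{\FEP}_{N,K}\big) \le \int d_{\textsc{tv}}\big(\P_{\eta'}(\eta(t_2) \in \cdot), \pi^{\FEP}_{N,K}\big)\, d\mu(\eta').
\]
I would then split this integral over $\mathcal{E}_{N,K}$ and $\mathcal{T}_{N,K}$: on $\mathcal{E}_{N,K}$ the integrand is at most $\varepsilon/2$ by definition of $t_2$, contributing at most $(\varepsilon/2)\mu(\mathcal{E}_{N,K}) \le \varepsilon/2$; on $\mathcal{T}_{N,K}$ the integrand is trivially at most $1$ while $\mu(\mathcal{T}_{N,K}) \le \varepsilon/2$ by definition of $t_1$, contributing at most $\varepsilon/2$. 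Adding the two gives a bound of $\varepsilon$, uniformly in $\eta$, hence $\tau^{\FEP}_{N,K}(\varepsilon) \le t_1 + t_2$.

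The argument has no deep obstacle; the only points requiring a word of care are that the transience bound $\mu(\mathcal{T}_{N,K}) \le \varepsilon/2$ and the ergodic mixing bound actually hold \emph{at} the infimum times $t_1$ and $t_2$, rather than only in the limit. For the former this follows because $\mathcal{E}_{N,K}$ is absorbing, so $t \mapsto \P_\eta(\eta(t) \in \mathcal{T}_{N,K})$ is continuous and non-increasing; for the latter it follows from continuity in $t$ of the transition probabilities. Both are standard facts for continuous-time Markov chains and can be dispatched in a line, leaving the genuine content to be the restart decomposition together with the convexity of total variation.
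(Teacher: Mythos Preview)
Your argument is correct and is exactly the standard two-phase decomposition the paper has in mind; indeed the paper gives no proof at all for this proposition, merely prefacing it with ``It is not hard to see that''. Your write-up fills in the omitted details faithfully, including the care about attainment of the infima, and there is nothing to compare or correct.
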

\begin{proof} Clearly, $\tau_{N,K}^{\FEP}(\varepsilon)$ is greater than $\theta_{N,K}(\varepsilon)$, because under $\pi_{N,K}^{\FEP}$ the probability to be transient is zero. One can also lower-bound $\tau_{N,K}^{\FEP}(\varepsilon)$ by $\tau^{\FEP}_{\mathcal{E}_{N,K}}(\varepsilon)$ because the configurations considered in the definition of $\tau^{\FEP}_{\mathcal{E}_{N,K}}(\varepsilon)$ are a subset of those for $\tau^{\FEP}_{N,K}(\varepsilon)$. 
Last, the upper bound is a consequence of Markov property.
\end{proof}

To control $\tau_{N,K}^{\FEP}$, we will therefore combine estimates on $\theta_{N,K}$ and on $\tau^{\FEP}_{\mathcal{E}_{N,K}}$. We will use the following bounds on the transience time from \cite{erignoux_cutoff_2024}:

\begin{proposition}[Transience time estimate]\label{prop:t} \cite[Lemma 4.5, Remark 9]{erignoux_cutoff_2024} For all $\varepsilon \in (0,1)$, there exists $C_{\varepsilon}>0$ such that for all $N\ge 2$ and $N/2<K<N$,
\eq{eq:transience}{ -C_{\varepsilon} K^2 \le \theta_{N,K}(\varepsilon) - \frac{K^2}{\pi^2} \log \frac{K}{2K-N}\le C_{\varepsilon} K^2 \log\log K.}
If $K = K(N)$ is such that $\frac{\log (2K-N)}{\log K} \longrightarrow 1$, and $\log\frac{K}{2K-N}=o(\log\log K)$,
\eq{eq:transience_short}{\theta_{N,K}(\varepsilon) =\mathcal{O}_{\varepsilon} \left(K^2 \log \frac{K}{2K-N}\right).}
\end{proposition}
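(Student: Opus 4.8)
This proposition is a restatement, in the present paper's notation, of the transience-time estimates established in \cite[Lemma 4.5, Remark 9]{erignoux_cutoff_2024}, so my plan is not to re-derive it from scratch but to exhibit the precise dictionary between the two formulations and to check that the quoted bounds transfer with the stated uniformity. The first point to settle is that the quantity $\theta_{N,K}(\varepsilon)$ defined here really is the transience time of \cite{erignoux_cutoff_2024}. Since $\mathcal{T}_{N,K} = \Gamma_{N,K}\setminus\mathcal{E}_{N,K}$ and the ergodic component $\mathcal{E}_{N,K}$ is absorbing for the FEP dynamics, the quantity $\P_{\eta}(\eta(t)\in\mathcal{T}_{N,K})$ is exactly the probability that absorption into $\mathcal{E}_{N,K}$ has not yet occurred by time $t$. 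Hence the infimum over $t$ with this probability $\le\varepsilon$, uniformly in $\eta\in\Gamma_{N,K}$, coincides with the $\varepsilon$-quantile of the worst-case absorption time studied in the cited work, and no reweighting is needed to pass between the two definitions.

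With the dictionary in place, I would read off \eqref{eq:transience} from Lemma 4.5. In \cite{erignoux_cutoff_2024} this is obtained through the mapping of the supercritical FEP to an attractive SSEP with traps, under which reaching $\mathcal{E}_{N,K}$ becomes an absorption event, so that the transience time is the time for the last un-trapped coordinate to be absorbed. The leading term $\frac{K^2}{\pi^2}\log\frac{K}{2K-N}$ is then a relaxation-time estimate on a diffusively rescaled ring of order $K$ sites, with the factor $\log\frac{K}{2K-N}$ reflecting the number of defect modes (adjacent empty pairs) that must decay, governed by the excess $2K-N$ of particles over holes. I would then transcribe the two halves separately, taking care that the asymmetry of the error terms — the clean $-C_{\varepsilon}K^2$ on the left versus the $+C_{\varepsilon}K^2\log\log K$ on the right — is genuinely what the source proves: the lower bound comes from a single slowly resolving configuration, whereas the upper bound must force \emph{all} defects to resolve and pays an extremal $\log\log K$ correction. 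The short-time estimate \eqref{eq:transience_short} is Remark 9: when $\log(2K-N)/\log K\to 1$ the ratio $K/(2K-N)$ is only $K^{o(1)}$ and the macroscopic supercriticality lets one replace the generic $\log\log K$ upper bound by a sharper diffusive $O(K^2)$ bound.

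The only genuine work — and the step I expect to be the main obstacle — is the uniformity bookkeeping in $K$ and $\varepsilon$. Because Proposition \ref{prop:easy} feeds $\theta_{N,K}(\varepsilon/2)$ into a maximum over the whole range $\frac{N}{2}<K<N$ in Theorems \ref{theo1} and \ref{th:diff_reg}, I must ensure that the constants $C_{\varepsilon}$ and $C'_{\varepsilon}$ supplied by the cited statements can be chosen independently of $N$ and of $K$, rather than merely along a single prescribed sequence $K=K(N)$. Concretely I would trace the $\varepsilon$-dependence through the absorption-time argument of \cite{erignoux_cutoff_2024} to verify that shifting the confidence level from a fixed reference to a general $\varepsilon$ costs only an additive $\mathcal{O}_{\varepsilon}(K^2)$ term, which is absorbed into the stated error bars, and that the estimates remain valid uniformly as $K$ ranges over $(\frac{N}{2},N)$. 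Once this uniform reading of Lemma 4.5 and Remark 9 is confirmed, both \eqref{eq:transience} and \eqref{eq:transience_short} follow immediately.
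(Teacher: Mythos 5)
Your proposal is correct and matches the paper, which gives no proof of Proposition \ref{prop:t} beyond the citation to \cite[Lemma 4.5, Remark 9]{erignoux_cutoff_2024}: the statement is imported verbatim (up to notation), so checking that $\theta_{N,K}(\varepsilon)$ agrees with the transience time of the cited work and that the constants are uniform in $N$ and $K$ is exactly all there is to do. Your additional heuristic account of the SSEP-with-traps mechanism is background rather than proof content, and your emphasis on the uniformity bookkeeping is the right place to focus scrutiny.
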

The missing ingredient for the mixing time, and the main contribution of this work, is to obtain precise bounds on $\tau_{\mathcal{E}_{N,K} }^{\FEP}(\varepsilon)$.

We set, for all $N$, 
\eqs{a_N = \sqrt{\max(1, \log\log N)}.} It will play the role of separating the regimes of $K$ far from $N$ or $N/2$, and $K$ close to these bounds. Any choice of $(a_N)$ going to infinity and less than $\sqrt{\log\log N}$ would be suitable. Section \ref{sec:u} is devoted to proving the following upper bounds. 
\begin{proposition}[Upper bound, far from the edges] \label{prop:UB}
For all $\varepsilon \in (0,1)$, for all sequence $K=K(N)$ such that for all $N$, $N/2 + a_N < K < N - a_N$, there exists $C_\varepsilon > 0$ such that
\eq{eq:propUB}{\tau^{\FEP}_{\mathcal{E}_{N,K}}(\varepsilon) \le \frac{K^2}{8\pi^2}\log\min(2K-N,N-K) + C_\varepsilon K^2.}
\end{proposition}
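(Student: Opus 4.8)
The plan is to push the problem, via the bijective mapping of the paper, onto the corner-flip dynamics (CFD), and there to adapt the upper-bound argument of \cite{lacoin_simple_2017}. First I would use the mapping to trade the ergodic FEP for an interface. An ergodic FEP on $\T_N$ with $K$ particles is coded by an SSEP on $\T_K$ carrying $P=N-K$ particles (hence $2K-N$ holes) together with its current through the origin; recording the current as the origin height turns this SSEP into its height function, i.e.\ a CFD configuration $\zeta$ on $\T_K$ evolving by corner flips, with $P=N-K$ uphill and $2K-N$ downhill slopes, so that the minority species has cardinality $m:=(2K-N)\wedge(N-K)$. Since the FEP configuration $\eta$ is a deterministic function of $\zeta$ and of the origin height $h_0$ (the current), and $\pi^{\FEP}_{N,K}$ pulls back to the uniform law on CFD shapes times the appropriate law of $h_0$, it suffices to mix the CFD shape together with the current. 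The current is a lazy random-walk coordinate that equidistributes modulo its relevant period in time $\mathcal{O}(K^2)$, which will be absorbed into the additive $C_\varepsilon K^2$; the genuine task is therefore to bound the mixing time of the CFD shape on $\T_K$ with $m$ minority slopes.

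Next I would set up a monotone grand coupling and reduce to extremal interfaces. The corner-flip dynamics is monotone for the natural height partial order, with a grand coupling under which two ordered interfaces stay ordered. Fixing the tilt (equivalently the value of $m$), every admissible interface is then sandwiched between the highest and lowest ones, $\zeta^+$ and $\zeta^-$, so the worst-case total variation distance is controlled by the coupling time of the two copies started from $\zeta^\pm$. Writing $h^\pm$ for the associated height functions, the coupling succeeds exactly when the area $A(t)=\sum_{k\in\T_K}(h^+_k(t)-h^-_k(t))\ge 0$ hits $0$; here $A$ is a nonnegative integer-valued supermartingale, and $d_{\textsc{tv}}\le \P(A(t)>0)\le \E[A(t)]$.

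Finally I would bound the coupling time in two phases, following \cite{lacoin_simple_2017}. In the hydrodynamic phase the expected height profile solves the discrete heat equation on $\T_K$, whose Fourier modes decay at rates $2(1-\cos(2\pi j/K))$, the slowest being $\mathrm{gap}=2(1-\cos(2\pi/K))\sim \tfrac{4\pi^2}{K^2}$. Starting from an amplitude of order $m$, the first mode is brought down to $\mathcal{O}(1)$ per site in time $\tfrac{1}{2\,\mathrm{gap}}\log m+\mathcal{O}(K^2)=\tfrac{K^2}{8\pi^2}\log m+\mathcal{O}(K^2)$. In the fluctuation phase, once the two interfaces are within a polylogarithmic distance, the supermartingale $A$ is driven to $0$ by its contact drift, and a second-moment estimate shows $\E[A(t)]\le\varepsilon$ after a further $C_\varepsilon K^2$. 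Summing the two phases and adding the $\mathcal{O}(K^2)$ cost of equidistributing the current yields the claimed bound.

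The delicate point, and the main obstacle, is the sharp constant $\tfrac{1}{8\pi^2}$ together with the correct logarithmic factor $\log m$ rather than $\log K$. A crude estimate—bounding $\E[A(t)]\le A(0)\,e^{-\mathrm{gap}\,t}$ with $A(0)=\mathcal{O}(mK)$—gives only $\tfrac{K^2}{4\pi^2}\log(mK)$, losing a factor $2$ in the constant and an extraneous $\log K$. Both losses are removed as in \cite{lacoin_simple_2017}: the factor $2$ comes from tracking the squared amplitude of the first Fourier mode, which relaxes at rate $2\,\mathrm{gap}$, so that the relevant half-life is $\tfrac{1}{2\,\mathrm{gap}}$; the passage from $\log(mK)$ to $\log m$ reflects that near equilibrium the discrepancy between the two copies is carried only by the $m$ minority particles, whose density profile (of amplitude $\mathcal{O}(m)$, independent of $K$) is the sole slow signal. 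Controlling this step—showing that once the profile has relaxed the residual $m$-particle fluctuations couple in time $\mathcal{O}(K^2)$, uniformly as $m\to\infty$—is precisely where the hypothesis $(2K-N)\wedge(N-K)\to\infty$ enters. A secondary technical point is the transfer of the total variation distance through the non-configurational current coordinate, which must be shown to contribute only at order $K^2$.
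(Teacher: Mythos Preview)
Your overall plan—push through the mapping to the corner-flip dynamics and import Lacoin's analysis—is right, but there is a genuine gap in the treatment of the current coordinate, and this is exactly where the paper's argument departs from a straight adaptation of \cite{lacoin_simple_2017}. You describe the current as ``a lazy random-walk coordinate that equidistributes modulo its relevant period in time $\mathcal{O}(K^2)$'' and call its handling ``a secondary technical point''. Neither is justified. The current $Y(t)$ only moves when the SSEP has a particle–hole pair at the edge $(K,1)$, so its jump rate is of order $m(K-m)/K^2\sim m/K$ when $m=(2K-N)\wedge(N-K)$ is small; a naive random-walk bound on $\T_N$ then yields a mixing time of order $K^3/m$, which for slowly growing $m$ (e.g.\ $m\sim\log K$) dwarfs the target $\frac{K^2}{8\pi^2}\log m + C_\varepsilon K^2$. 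The extremal-interface grand coupling does not rescue this: to sandwich every $(X,\sigma)\in\T_N\times\Gamma_{K,P}$ one needs $\zeta^+_0-\zeta^-_0\ge N\sim K$, so $\zeta^\pm$ start $\Theta(K)$ apart rather than $\mathcal{O}(\sqrt m)$ apart, and Lacoin's $\mathcal{O}(K^2)$ merging estimate (Proposition~\ref{prop:coupl}) no longer applies. Incidentally, under the monotone coupling the area $A(t)$ is a martingale, not a contracting supermartingale, so $\E[A(t)]\to 0$ cannot be the mechanism; and the mapping gives a SSEP with $2K-N$ particles, not $N-K$, though this last point is harmless given your use of $m$.

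The paper avoids mixing the current through its dynamics altogether. The key observation is that $\P_\eta(\eta(t)\in\cdot)$ does not depend on which particle is tagged, so one may take the initial rank $k(0)\sim\mathcal{U}(\T_K)$ independently of everything; a conservation argument (Proposition~\ref{prop:dist_k}) then gives $k(t)\sim\mathcal{U}(\T_K)$ independent of $\eta(t)$ for all $t$, hence $X(t)$ is uniform over the occupied sites of $\eta(t)$. After the burn-in $t_1=\frac{K^2}{8\pi^2}\log m$ the density of $\eta(t_1)$ is flat up to $\sqrt m$ fluctuations (transferred from the SSEP via Proposition~\ref{prop:link_fep_ssep}), so $X(t_1)$ is already almost uniform on $\T_N$; the residual bias is removed by an additional random shift $U$ of order $\sqrt m$. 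This $U$ is precisely the freedom in the choice of the sandwiching height functions: one takes $\zeta^{(1)},\zeta^{(2)}$ built from an \emph{equilibrium} SSEP $\sigma'$ with origin heights $Y(t_1)\pm U$ (Definition~\ref{def:init_cond}). They are only $\mathcal{O}(\sqrt m)$ apart, so Proposition~\ref{prop:coupl} couples them in a further $C_\varepsilon K^2$, and Lemma~\ref{lem:init_height} shows their origin heights modulo $N$ are $\varepsilon$-close to $\mathcal{U}(\T_N)$. This joint control of shape and origin height—not a separate random-walk argument—is the heart of the proof, and it is the piece missing from your outline.
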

\begin{proposition}[Upper bound, close to the edges] \label{prop:UBc}
For all $\varepsilon \in (0,1)$, for all sequence $K=K(N)$ such that for all $N$, $N/2 < K \le N/2 + a_N$ or $N > K \ge N - a_N$, there exists $C_\varepsilon > 0$ such that
\eq{eq:propUBc}{\tau^{\FEP}_{\mathcal{E}_{N,K}}(\varepsilon) \le  C_\varepsilon (Ka_N)^2.}
\end{proposition}
\begin{remark}
In the conditions of Proposition \ref{prop:UB},  by \cite{lacoin_simple_2017}, the right hand side of \eqref{eq:propUB} is equivalent to the $\varepsilon$-mixing time of a SSEP on $\T_K$ with $2K-N$ particles. 
\end{remark}

Section \ref{sec:b} is devoted to proving the following lower bound.

\begin{proposition}[Lower bound] \label{prop:LB}
Set $P = \min(2K-N,N-K)$. If $P \to \infty$, for all $\varepsilon \in (0,1)$, there exists $C_\varepsilon >0$ such that for all $N \ge 2$, 
\eqs{\label{eq:lb-ergo}\frac{K^2}{8\pi^2}  \log P  - C_\varepsilon K^2  \le \tau_{\mathcal{E}_{N,K} }^{\FEP}(\varepsilon).}
\end{proposition}

\begin{remark}
Under the assumptions of Proposition \ref{prop:LB}, the left hand side of \eqref{eq:lb-ergo} corresponds to the mixing time of a SSEP on $\T_K$ with $2K-N$ particles.
\end{remark}
We now prove the theorems by combining \eqref{eq:encadre} and the adequate upper and lower bounds from the propositions.
\begin{proof}[Proof of Theorem \ref{theo1} and \ref{th:diff_reg}]
If $\frac{\log (2K-N)}{\log K} \to 0$, we take the left hand side of \eqref{eq:transience} as a lower bound:
\begin{align}
\tau_{N,K}^{\FEP}(\varepsilon) &\ge \frac{K^2}{\pi^2} \log \frac{K}{2K-N} - C_{\varepsilon}K^2 \nonumber \\
& \ge \frac{K^2}{\pi^2} \log K - C'_{\varepsilon}K^2 (1+\log (2K-N)). \label{eq:lbcrit}
\end{align} 
For the upper bound, we sum the right hand side of \eqref{eq:transience} to a suitable upper bound on $\tau_{\mathcal{E}_{N,K}}(\varepsilon)$. Up to extracting a subsequence, either $K > \frac{N}{2} + a_N$ for all $N$ sufficiently large, or $K \le \frac{N}{2} + a_N$ for all $N$ sufficiently large. Applying \eqref{eq:propUB} in the first case and \eqref{eq:propUBc} in the second case, we obtain:
\eqs{\tau_{N,K}^{\FEP}(\varepsilon) \le \frac{K^2}{\pi^2}\log K + \mathcal{O}_{\varepsilon}\left(K^2 \max\left(\log \log K, \log(2K-N)\right)\right).}
This proves the first point of Theorem \ref{th:diff_reg}.

We now prove the second point of Theorem \ref{th:diff_reg}. If $\frac{\log(2K-N)}{\log K} \to \alpha \in (0,1)$, we use \eqref{eq:transience} and \eqref{eq:lb-ergo} for the lower bound:
\eqs{\frac{\tau_{N,K}^{\FEP}(\varepsilon)}{K^2\log K} \ge \max\left(\frac{1-\alpha}{\pi^2}  + o(1) , \frac{\alpha}{8\pi^2} + o(1) \right).}
We sum \eqref{eq:transience} and \eqref{eq:propUB} to obtain the upper bound:
\eqs{\frac{\tau_{N,K}^{\FEP}(\varepsilon)}{K^2\log K} \le \frac{1-\alpha}{\pi^2} + \frac{\alpha}{8\pi^2} + o(1).}

For the third point of Theorem \ref{th:diff_reg}, assume that $\frac{\log (2K-N)}{\log K} \to 1$ and $(N-K) \to \infty$. We use this time \eqref{eq:lb-ergo} for the lower bound:
\eqs{\tau_{N,K}^{\FEP}(\varepsilon) \ge \frac{K^2}{8\pi^2} \log \min (2K-N,N-K) - C_\varepsilon K^2 .}
For the upper bound, up to choosing a sequence $(a_N)$ growing to infinity with $a_N<N-K$, the assumptions of Proposition \ref{prop:UB} are satisfied, so we can use \eqref{eq:propUB} and sum this with the upper bound from \eqref{eq:transience_short}: 
\eqs{\tau_{N,K}^{\FEP}(\varepsilon) \le \frac{K^2}{8\pi^2}\log \min(2K-N,N-K) + \mathcal{O}_\varepsilon\left(K^2\left(1+\log \frac{K}{2K-N}\right)\right).}

We now prove Theorem \ref{theo1}. First, by \eqref{eq:transience},
\eqs{\sup\limits_{N/2< K< N} \tau_{N,K}^{\FEP}(\varepsilon) \ge \tau_{N,\entc{N/2}+1}^{\FEP}(\varepsilon)\ge \frac{N^2}{4\pi^2}\log N - C_\varepsilon N^2.}
Then, combining \eqref{eq:transience}, \eqref{eq:propUB} and \eqref{eq:propUBc}, we obtain an upper bound valid for all sequence $K=K(N)$. For all $\varepsilon \in (0,1)$, for all sequence $K(N)$ such that, for all $N$, $N/2 < K < N$, there exist $C^{(1)}_\varepsilon,C^{(2)}_\varepsilon,C^{(3)}_\varepsilon ,C_\varepsilon>0$ such that for all $N$, 
\begin{align}
\tau_{N,K}^{\FEP}(\varepsilon) \le&  \frac{K^2}{\pi^2} \log \frac{K}{2K-N} + C^{(1)}_\varepsilon K^2 \log\log K \nonumber \\
&+ C^{(2)}_\varepsilon K^2 \log\log K \un_{\{\min(2K-N,N-K )\le a_N\}} \nonumber \\
&+ \left(\frac{K^2}{8\pi^2}\log \min (2K-N,N-K) + C_\varepsilon^{(3)}K^2\right) \un_{\{\min(2K-N,N-K )> a_N\}} \nonumber\\
\le & \frac{K^2}{\pi^2} \log\frac{K}{2K-N} + \frac{K^2}{8\pi^2} \log \min(2K-N,N-K) + C_\varepsilon N^2\log\log N. \label{eq:ub-all-K}
\end{align}
We apply this to a a sequence $K^*=K^*(N)$ that maximises $\tau_{K,N}(\varepsilon)$ for all $N$, and notice that the maximal value in the right hand side of \eqref{eq:ub-all-K} is achieved for $K$ close to $N/2$. 
\end{proof}
The core of our article will be proving Proposition \ref{prop:UB} in Section \ref{sec:u} and Proposition \ref{prop:LB} in Section \ref{sec:b}, and an important tool for both proofs will be a mapping, developed in Section \ref{sec:map}.

\section{A crucial mapping} \label{sec:map}
We introduce here a mapping between the ergodic FEP and the SSEP which is a key ingredient for the proof of both Propositions \ref{prop:UB} and \ref{prop:LB}. We define it for the ergodic FEP but it can directly be extended to all FEP configurations, that are then mapped to an appropriate SSEP with traps configuration, as exploited in \cite{erignoux_cutoff_2024}.

We first define the SSEP on $\T_K$ with $P$ particles: its configurations belong to $\Gamma_{K,P}$ (defined in \eqref{eq:def_gammaNK}) and each particle tries to jump left or right at rate 1, with an exclusion constraint. In other words, the SSEP is the Markov process on $\Gamma_{K,P}$ with the following generator:
\eqs{\mathcal{L}^{\textsc{SSEP}}_{K,P} f (\sigma) = \sum_{k\in \T_K} \left(\sigma_k(1-\sigma_{k+1}) + (1-\sigma_k)\sigma_{k+1}\right) \left(f(\sigma^{k,k+1}) - f(\sigma)\right),}
where, as in \eqref{eq:swap-fep}, $\sigma^{k,k+1}$ is defined as the configuration obtained after exchanging the values of sites $k$ and $k+1$. 
We will now introduce a static mapping connecting ergodic FEP configurations to appropriate SSEP configurations in a bijective way. 
\paragraph{Static mapping} To ensure bijectivity, 
we will not simply associate a SSEP configuration to a FEP configuration, but a couple made of the position of a tagged particle in the FEP and a SSEP configuration, to the couple made of the rank of this tagged particle in the FEP and a FEP configuration. More precisely, for all FEP configuration $\eta \in \Gamma_{N,K}$, for all $k \in \T_K$, set
\eqs{x_k(\eta) = \inf\{x \in \T_N : \sum_{y=1}^x\eta_y = k \},}
the position of the $k^{\hbox{\tiny th}}$ particle in $\eta$. 
We can then define the static mapping, illustrated in Figure \ref{fig:stat}.
\begin{proposition}[Static mapping] For all $\eta\in \mathcal{E}_{N,K}$, for all $k,l\in \T_K$, set
\eq{eq:sigma_mapping}{\sigma^{(k,\eta)}_l = 2 - x_{k+l}(\eta) + x_{k+l-1}(\eta).}
Then, $\sigma^{(k,\eta)} \in \Gamma_{K,2K-N}$, and the map
\begin{equation} \label{eq:mapping}\Phi : 
\begin{cases} \T_{K} \times \mathcal{E}_{N,K} \to\T_{N}\times \Gamma_{K,2K-N} \\
 (k,\eta) \mapsto  (x_k(\eta), \sigma^{(k,\eta)})
\end{cases}
\end{equation}
is bijective. 
\end{proposition}
Notice that the indices in \eqref{eq:sigma_mapping} belong to $\T_K$, so the sums are to be understood modulo $K$.
\begin{figure}
\centering
\includegraphics[width=0.9\textwidth]{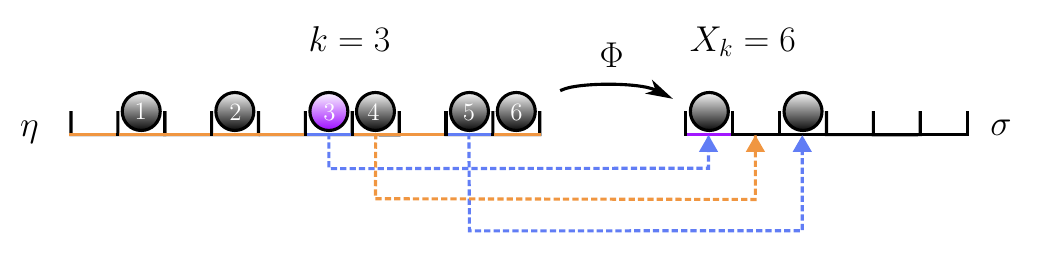}
\caption{Illustration of the static mapping. The rank of the purple particle in $\eta$ is $k$ and its position is $X_k$. Each site of $\sigma$ is in correspondence with a particle of $\eta$: the first site of $\sigma$ is related to the $k^{th}$ particle of $\eta$, the second site of $\sigma$ to the $(k+1)^{th}$ particle of $\eta$, etc.  If a particle of $\eta$ is followed by another particle, it is underlined in blue in $\eta$, and there is a particle on the corresponding site of $\sigma$. If a particle of $\eta$ is followed by an empty site, it is underlined in orange in $\eta$, and the corresponding site of $\sigma$ is empty. }
\label{fig:stat}
\end{figure}
\begin{proof}
\textit{Definition. }It is easy to check that $\sigma^{(k,\eta)}$, as defined by \eqref{eq:sigma_mapping}, is an exclusion configuration: by ergodicity of $\eta$, $1 \le x_j(\eta) - x_{j-1}(\eta) \le 2$ for all $j \in \T_K$. To see that it has $2K-N$ particles, we can sum \eqref{eq:sigma_mapping} for $1 \le l \le K$.

\textit{Surjectivity. }Now we show that, given a couple $(x, \sigma) \in \T_N \times \Gamma_{K,2K-N}$, we can find a couple $(k, \eta) \in \T_K \times \mathcal{E}_{N,K}$ such that $\Phi(k,\eta) = (x, \sigma)$. We construct a set of sites in $\T_N$ 
\eqs{I = \Big\{x + \sum_{j = 1}^l (2- \sigma_j) \mod N, 0\le l \le K-1\Big\},}
where by convention the empty sum is zero. We set $\eta$ the configuration on $\T_N$ such that its occupied sites are the sites of $I$, i.e. $\eta_y = \un_{\{y \in I\}}$ for $y \in \T_N$, and finally set 
\eqs{k = \sum_{y=1}^x \eta_y.}

Now, we show that $\eta \in \mathcal{E}_{N,K}$.
We first check that $\eta$ has $K$ particles, by proving that the elements of $I$ are distinct: for all $0 \le j < l \le K$, 
\eq{eq:surject}{x + \sum_{i = 1}^l (2-\sigma_i) - \left(x + \sum_{i = 1}^j (2-\sigma_i)\right)  = \sum_{i = j+1}^l (2-\sigma_i).}
The right hand side of \eqref{eq:surject} is clearly positive, we now show that it is strictly less than $N$:
\begin{itemize}
\item If $l-j \le N-K$, \eqref{eq:surject} is upper-bounded by $2(l-j) \le 2(N-K) < N$. 
\item If $l-j > N-K$, notice that $\sigma$ has $N-K$ empty sites, so $\sigma$ has at least $l-j - (N-K)$ particles in the segment $[j+1,l]$. Hence \eqref{eq:surject} is upper-bounded by $2(l-j) - (l-j-(N-K)) = l-j + N-K < N$ because $l-j < K$. 
\end{itemize}
In all cases, the difference between two positions indexed by different $j$ and $l$ is strictly between 0 and $N$. Thus, the positions modulo $N$ are still distinct, and $I$ has cardinality $K$.
Finally, $\eta$ is ergodic because by the definition of $I$, the maximum distance between two consecutive particles is 2. 

Hence $(k,\eta) \in \T_K \times\mathcal{E}_{N,K}$, and we just need to show that $\Phi(k,\eta) = (x,\sigma)$. 
First, $\sum_{y=1}^x \eta_y = k$ so $x_k(\eta) \le x$. Since $x\in I$, we have  $\eta_x = 1$, and for all $1 \le y \le x-1, \sum_{z=1}^y \eta_y < k$. Therefore $x_k(\eta) = x$. Similarly, for all $l \in \T_K$, 
\eqs{x_{k+l}(\eta) = x+ \sum_{j=1}^l (2-\sigma_j).} 
We then obtain, for all $l \in \T_K$, 
\eqs{\sigma_l^{(k,\eta)} = 2 - x_{k+l}(\eta) + x_{k+l-1}(\eta) = 2 -(2 - \sigma_{l}) = \sigma_l.}
This concludes the proof that $\Phi(k,\eta) = (x,\sigma)$.

\textit{Injectivity. }Let $(k,\eta), (k', \eta')$ such that $\Phi(k,\eta) =  \Phi(k', \eta')$. Then, 
\begin{align}
&x_k(\eta) = x_{k'}(\eta') \\
&\forall l \in \T_K, \quad x_{k+l}(\eta)-x_{k+l-1}(\eta) = x_{k'+l}(\eta')-x_{k'+l-1}(\eta').
\end{align}
Therefore, for all $l \in \T_K$, $x_{k+l}(\eta) = x_{k'+l}(\eta')$, so $\eta$ and $\eta'$ have the same particle positions, hence $\eta = \eta'$. Then, since $x_k(\eta) = x_{k'}(\eta')$, $k=k'$.
\end{proof}

Notice that this static mapping allows to easily compute the cardinality of $\mathcal{E}_{N,K}$, which was already known by \cite{gabel_facilitated_2010}: 
\eqs{|\mathcal{E}_{N,K}| = \frac{N}{K} |\Gamma_{K,2K-N}| = \frac{N}{K} \binom{K}{2K-N}.}
We could thus have shown only injectivity or surjectivity and used the cardinality of $\mathcal{E}_{N,K}$ to obtain the bijectivity of $\Phi$, but we chose this approach as it yields a nice proof of the cardinality of $\mathcal{E}_{N,K}$.

\paragraph{Dynamic mapping} We now consider the effect of the FEP dynamics on the mapping. We first introduce some notation. 
Let $(k, \eta) \in \T_K \times \mathcal{E}_{N,K}$, we set $k(0) = k$ and $\eta(0) = \eta$. Let $x = x_k(\eta)$, we set $X(0) = x$ and consider the joint dynamics of the FEP and a tagged particle, started from $(x, \eta)$, given by the following generator:
\begin{multline}
\mathcal{L}^{\FEP, tag}f(x,\eta) = \sum_{\substack{1\le y \le N\\ y \neq x}}\eta_y \sum_{z \in \{-1,1\} } \eta_{y-z}(1-\eta_{y+z}) \left(f(x, \eta^{y,y+z}) - f(x, \eta)\right) \\+ \eta_x \sum_{z \in \{-1,1\} } \eta_{x-z}(1-\eta_{x+z}) \left(f(x + z, \eta^{x,x+z}) - f(x, \eta)\right).
\end{multline}
We denote by $\eta(t)$ the FEP at time $t$ and $X(t)$ the position of the tagged particle in $\eta(t)$ at time $t$. We also set
\eqs{k(t) = \sum_{y=1}^{X(t)} \eta_y(t)}
the rank of the particle at site $X(t)$ in $\eta(t)$. By construction, it is a deterministic function of $\eta(t)$ and $X(t)$. 
Defining the current through an edge as the total number of particles having crossed it from left to right minus the total number of particles having crossed it from right to left, $k(t)-k(0)$ is equal to the current through the edge $(N,1)$ from times 0 to $t$ in $\left(\eta(s)\right)_{0 \le s \le t}$, taken modulo $K$.
Last, we set for all $t\ge 0$
\eqs{(Y(t), \sigma(t)) = \Phi(k(t), \eta(t)).}
A first observation is that for all $t \ge 0$, 
\eqs{Y(t) = X(t).}
Indeed, $k(t)$ is defined as the rank of the tagged particle, whose position is $X(t)$, so it is clear that $Y(t) = x_{k(t)}(\eta(t)) = X(t)$ at all times $t\ge 0$.

Then we have the following properties.
\begin{proposition}[Properties of the dynamic mapping] \hfill
\begin{enumerate}
\item The process $(\sigma(t))_{t \ge 0}$ is a SSEP started from $\sigma(0)$.
\item For all $t \ge 0$, $X(t)-X(0)$ is equal to the total current through the edge $(K,1)$ from times 0 to $t$ in $(\sigma(s))_{0 \le s \le t}$, modulo $N$.
\end{enumerate}
\end{proposition}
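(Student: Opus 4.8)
The plan is to understand precisely how an elementary FEP jump acts on the pair $(k(t),\eta(t))$ and hence, through the bijection $\Phi$, on the pair $(X(t),\sigma(t))$, and then to argue that the induced dynamics on $\sigma$ is exactly that of a SSEP generator on $\T_K$ while the displacement of the tagged particle reads off the SSEP current. Since $\Phi$ is a bijection with $Y(t)=X(t)$ at all times, it suffices to track the jump rates and the resulting transitions. I would first fix a generic ergodic configuration $\eta$, a rank $k$, and consider the effect of a single allowed jump of the FEP (a particle moving from $y$ to $y\pm1$) on the gap sequence $\bigl(x_{k+l}(\eta)-x_{k+l-1}(\eta)\bigr)_l$, which by \eqref{eq:sigma_mapping} determines $\sigma$ via $\sigma_l=2-(x_{k+l}-x_{k+l-1})$.

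The key computation is local. A legal FEP jump requires $\eta_{y-z}\eta_y(1-\eta_{y+z})=1$, i.e. the jumping particle has an occupied neighbour behind it and an empty site ahead. In terms of the gaps between consecutive particles, such a jump takes a gap pattern (distance $1$ to the predecessor, distance $2$ to the successor) and swaps it to (distance $2$ behind, distance $1$ ahead), or vice versa depending on the jump direction. Writing this in the $\sigma$ variables, a gap of $1$ corresponds to $\sigma=1$ (occupied) and a gap of $2$ to $\sigma=0$ (empty); so the FEP jump precisely exchanges an adjacent occupied-empty pair in $\sigma$ with an empty-occupied pair, which is exactly a SSEP jump on $\T_K$. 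I would verify that the kinetic/exclusion constraint $\eta_{y-z}\eta_y(1-\eta_{y+z})=1$ is automatically satisfied for \emph{every} such $\sigma$-pair (this is where ergodicity of $\eta$ is used, guaranteeing all gaps are $1$ or $2$), so that the constraint disappears at the SSEP level and every adjacent discordant pair in $\sigma$ swaps at rate $1$ in each direction. Matching the rate-$1$-per-direction convention of both generators then identifies $(\sigma(t))_{t\ge0}$ as a genuine SSEP started from $\sigma(0)$, proving item 1. The cleanest way to make this rigorous is to conjugate the FEP-with-tagged-particle generator $\mathcal{L}^{\FEP,tag}$ by $\Phi$ and check it equals the SSEP generator acting on $\sigma$ (tensored with the deterministic $X$-update); I would present the jump-by-jump correspondence and then state that it amounts to this generator identity.

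For item 2, I would track the index of the tagged particle. The tagged particle sits at rank $k(t)$, which corresponds to the boundary site of $\sigma$ (the edge $(K,1)$). When a particle jumps across the tagged particle, i.e. across edge $(N,1)$ in $\eta$, the rank $k(t)$ changes by $\pm1$; by the earlier observation this is the FEP current through $(N,1)$ modulo $K$. Under $\Phi$ a jump across the tagged boundary in $\eta$ corresponds precisely to a SSEP jump across edge $(K,1)$, each crossing moving the tagged particle's physical position $X(t)$ by $\pm1$ on $\T_N$. I would show that each left-to-right SSEP crossing of $(K,1)$ advances $X(t)$ by exactly the gap cleared, and that the net displacement $X(t)-X(0)$ modulo $N$ equals the signed count of crossings, i.e. the SSEP current through $(K,1)$. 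This again follows from the explicit relation $x_{k(t)}(\eta(t))=X(t)$ combined with how the boundary gap updates when a particle enters or leaves the first site of $\sigma$.

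The main obstacle I anticipate is the careful bookkeeping at the tagged boundary: one must handle the two directions of jumps, the wrap-around of the periodic indices in both $\T_N$ and $\T_K$, and the simultaneous update of $k(t)$ (hence the cyclic relabelling of $\sigma$'s sites) and of $X(t)$, so that the displacement of $X$ is read off as a \emph{current} rather than a net particle count. A subtle point is that $\Phi$ relabels $\sigma$ relative to the tagged rank $k(t)$, so when the tagged particle's rank shifts one has to confirm the relabelling is consistent with viewing $\sigma(t)$ as a single SSEP trajectory and not as a reindexed family; resolving this cleanly—arguing that the tagged-particle convention in $\Phi$ and the moving origin of the current are compatible—is the crux of the argument. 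The bulk computations (checking all four local jump cases and summing signed crossings) are routine once this correspondence is set up.
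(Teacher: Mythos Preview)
Your plan for item 1 is essentially the paper's approach: both reduce to the jump-by-jump check that a legal FEP move of the $(k+l)$-th particle swaps exactly the adjacent pair $(\sigma_l,\sigma_{l+1})$, that ergodicity of $\eta$ ensures every discordant SSEP edge arises from such a move, and that the rate-$1$ conventions match. Phrasing this as conjugating $\mathcal{L}^{\FEP,tag}$ by $\Phi$ is a fine way to package that case analysis.

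For item 2, however, your write-up conflates two distinct objects and this would derail the computation if carried out as stated. You repeatedly identify ``the tagged particle'' with the fixed edge $(N,1)$ of $\eta$ and bring in the evolution of $k(t)$ (the FEP current through $(N,1)$). None of this is relevant to item~2. The quantity $X(t)$ changes \emph{only} when the tagged particle itself hops, and under the gap encoding $\sigma_K$ is the gap immediately \emph{before} the tagged particle while $\sigma_1$ is the gap immediately \emph{after} it; swapping these two values is exactly a SSEP jump across the edge $(K,1)$. Thus each hop of the tagged particle changes $X(t)$ by $\pm 1$ and contributes $\pm 1$ to the SSEP current through $(K,1)$, while every other FEP jump leaves $X(t)$ fixed and corresponds to a SSEP jump on some edge $(l,l+1)$ with $1\le l\le K-1$. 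That is the entire argument, and neither the edge $(N,1)$ nor the process $k(t)$ enters. Your phrase ``advances $X(t)$ by exactly the gap cleared'' is also dangerous: the increment of $X$ is always $\pm 1$, not the gap size $2$.

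Your relabelling worry is legitimate, but it concerns item~1 rather than item~2, and it resolves cleanly. When a \emph{non-tagged} particle crosses the edge $(N,1)$ in $\eta$, the rank $k(t)$ shifts by $\pm 1$; one might fear this cyclically reindexes $\sigma$. It does not: all particle ranks $j\mapsto x_j$ are simultaneously relabelled by the same $\pm 1$, so in $\sigma_l = 2-(x_{k(t)+l}-x_{k(t)+l-1})$ the two shifts cancel, and the only net change is the local swap at the edge corresponding to the jumping particle. Once you make this cancellation explicit, the proof is complete and coincides with the paper's case analysis.
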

\begin{figure}
    \centering
    		\begin{subfigure}[t]{7.62cm} 
		\vspace{0pt}
        \includegraphics[width=7.62cm]{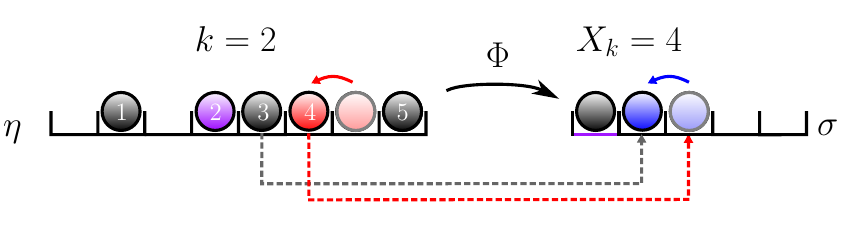}
        \caption{Jump to the left}\label{fig:saut_gauche}
    \end{subfigure}
        \hfill
    \begin{subfigure}[t]{7.62cm}
        \vspace{0pt}
        \includegraphics[width=7.62cm]{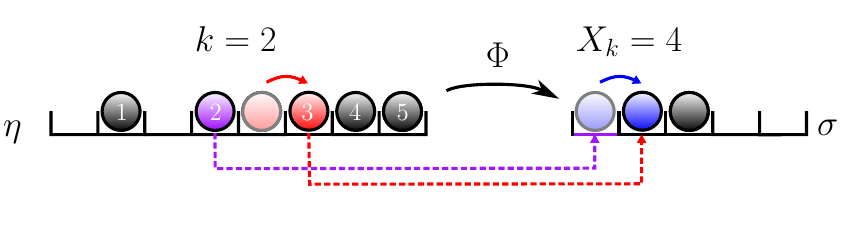}
        \caption{Jump to the right}\label{fig:saut_droite}
    \end{subfigure}
     \begin{subfigure}[t]{7.62cm} 
     \vspace{0pt}
        \includegraphics[width=7.62cm]{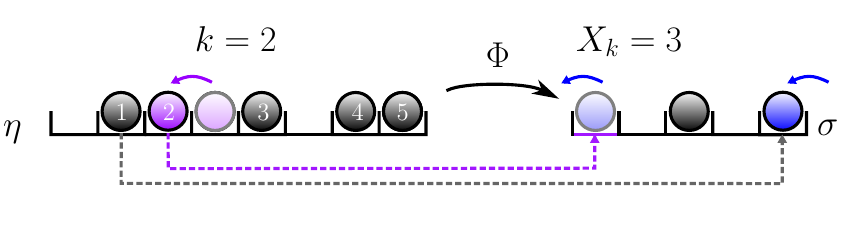}
        \caption{Change of $X_k$ and the current through $\sigma$}\label{fig:saut_courant}
    \end{subfigure}   
        \hfill
		\begin{subfigure}[t]{7.62cm}
		 \vspace{0pt}  
        \includegraphics[width=7.62cm]{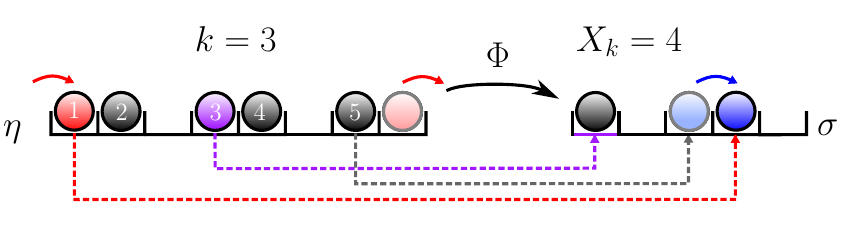}
        \caption{Change of $k$ and the current through $\eta$}\label{fig:saut_rang}
    \end{subfigure}		
    \caption{Different kinds of jumps. The tagged particle is coloured in purple. }\label{fig:ex_sauts}
\end{figure}
\begin{proof}We list every possible transition of $(X,\eta)$ and their effect on $(X,\sigma) = \Phi(k,\eta)$. Notice that, aside from the evolution of $X$, this is the same proof as the dynamic mapping in \cite{erignoux_cutoff_2024}, but we focus here on the ergodic component. The transitions are summarised in Figure \ref{fig:ex_sauts}.

\medskip
\noindent \textit{Jumps that do not affect $X$.} We consider jumps of particles other than the $k^{th}$ particle.

\smallskip
Consider a particle in $\eta$ of rank $k+l \mod K$ with $l\neq 0$, and set $y=x_{k+l}(\eta)$ its position. If it has a right neighbour and an empty site to its left, as in Figure \ref{fig:saut_gauche}, then $\sigma_l = 1$ and $\sigma_{l-1}=0$. 
Indeed, $x_{k+l+1}(\eta)=x_{k+l}(\eta)+1$, and since $\eta$ is ergodic, $x_{k+l-1}(\eta)=x_{k+l}(\eta)-2$. 
Let $\eta^{y,y-1}$ the configuration after the particle jumps to the left: then, $x_{k+l}(\eta^{y,y-1})=x_{k+l}(\eta)-1$, so $x_{k+l+1}(\eta^{y,y-1})=x_{k+l}(\eta^{y,y-1})+2$ and $x_{k+l-1}(\eta^{y,y-1})=x_{k+l}(\eta^{y,y-1})-1$. 
Therefore, $\Phi(k,\eta^{y,y-1}) = (X,\sigma^{l,l-1})$. 
\smallskip

Similarly, consider a particle in $\eta$ of rank $k+l \mod K$ with $l\neq 0$  and position $y$. If it has a left neighbour and an empty site to its right, as in Figure \ref{fig:saut_droite}, then $\sigma_{l-1} = 1$ and $\sigma_{l}=0$. 
Let $\eta^{y,y+1}$ the configuration after the particle jumps to the right: then, $x_{k+l}(\eta^{y,y+1})=x_{k+l}(\eta)+1$, so $x_{k+l+1}(\eta^{y,y+1})=x_{k+l}(\eta^{y,y+1})+1$ and $x_{k+l-1}(\eta^{y,y+1})=x_{k+l}(\eta^{y,y+1})-2$. 
Therefore, $\Phi(k,\eta^{y,y+1}) = (X,\sigma^{l-1,l})$. 

\medskip
\noindent \textit{Jumps that change $X$.} If $\eta_{X+1} = 1$ and $\eta_{X-1} = 0$, as in Figure \ref{fig:saut_courant}, then $\sigma_1 = 1$, $\sigma_K = 0$. As before, the jump $\eta \to \eta^{X,X-1}$ induces a jump $\sigma \to \sigma^{1,K}$. Such a jump reduces the current through $(K,1)$ in the SSEP by $1$. 

If $\eta_{X+1} = 0$ and $\eta_{X-1} = 1$, then $\sigma_1 = 0$, $\sigma_K = 1$. Then, the jump $\eta \to \eta^{X,X+1}$ induces a jump $\sigma \to \sigma^{K,1}$, which increases the current through $(K,1)$ in the SSEP by $1$. 

In both cases, the current through $(K,1)$ in $\sigma$ has the same evolution as the position of the tagged particle $X$. 

\medskip
To conclude, all of these transitions occur at rate $1$, so $(\sigma(t))_{t\ge 0}$ has the law of a SSEP started from $\sigma(0)$, and $(X(t))_{t\ge 0}$ evolves like the current through $(K,1)$ in this SSEP.
\end{proof}

\begin{corollary} \label{cor:mapc}
The trajectory of a tagged particle in an ergodic FEP can be coupled in a deterministic way to the total current through the origin in the corresponding SSEP, taken modulo $N$. 
\end{corollary}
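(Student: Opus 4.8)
The plan is to read the statement directly off the dynamic mapping, interpreting the pathwise identity of its second assertion as a coupling. First I would fix a graphical construction of the joint dynamics: realize the FEP-with-tagged-particle $(X(t),\eta(t))$, started from $(x_k(\eta),\eta)$, from a family of Poisson clocks attached to the edges of $\T_N$, so that its generator is exactly $\mathcal{L}^{\FEP, tag}$. Setting $(Y(t),\sigma(t)) = \Phi(k(t),\eta(t))$ at every time $t$, the Proposition on the dynamic mapping guarantees that $(\sigma(t))_{t\ge 0}$ is a SSEP on $\T_K$ driven by these very same clocks. Thus the ergodic FEP, its tagged particle, and the corresponding SSEP all live on a single probability space, which is precisely the coupling we seek; no independent randomness is introduced in passing from one process to the other, since $\Phi$ is a deterministic bijection.

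Second, I would extract the deterministic identity. We have already observed that $Y(t) = X(t)$ for all $t$, because $k(t)$ is by construction the rank of the tagged particle sitting at $X(t)$, so $Y(t) = x_{k(t)}(\eta(t)) = X(t)$. Combining this with the second assertion of the dynamic mapping Proposition yields
\eqs{X(t) - X(0) \equiv J_{(K,1)}(t) \pmod{N},}
where $J_{(K,1)}(t)$ denotes the total current (right-crossings minus left-crossings) through the edge $(K,1)$ in the SSEP $(\sigma(s))_{0\le s\le t}$. The point to stress is that this holds pathwise, not merely in distribution: the case analysis in the proof of the dynamic mapping shows that each individual displacement $X \to X\pm 1$ of the tagged particle is in exact correspondence, triggered by the same clock, with a crossing of the edge $(K,1)$ by the SSEP, of matching sign. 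Hence every increment of $X$ equals the corresponding increment of the current, and the congruence above is an equality of trajectories once the common clocks are fixed.

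Finally I would conclude: since along this coupling the SSEP trajectory determines the current $J_{(K,1)}(\cdot)$, and the displacement $X(\cdot) - X(0)$ is recovered from it modulo $N$ with no further randomness, the trajectory of the tagged particle in the ergodic FEP is a deterministic function of the current through the origin of the corresponding SSEP. This is exactly the asserted deterministic coupling. There is really no hard step here—the corollary merely repackages the second assertion of the dynamic mapping Proposition—so the only thing to be careful about is emphasizing the pathwise (rather than in-law) nature of the identity, which is already furnished by the transition-by-transition matching carried out in the preceding proof.
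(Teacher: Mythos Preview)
Your proposal is correct and matches the paper's approach: the corollary is stated without proof in the paper, as an immediate consequence of the second assertion of the dynamic mapping Proposition, and you have simply spelled out why that consequence is pathwise and deterministic. There is nothing to add.
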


\begin{remark}
This can be generalised to any starting configuration for the FEP: the trajectory of a tagged particle in the FEP can be coupled in a deterministic way to the total current in an associated SSEP with traps, introduced in \cite{erignoux_cutoff_2024}.
\end{remark}

For $\sigma \in \Gamma_{K,2K-N}$, we denote by $\bP_\sigma(\sigma(t) \in \cdot)$ the distribution of the SSEP started from $\sigma$ at time $t$. Set
\eqs{\pi_{K,2K-N}^{\SSEP} = \mathcal{U}(\Gamma_{K, 2K-N})}
its invariant measure. We connect the convergence to stationarity of the SSEP and the FEP through the mapping.
\begin{proposition}[Stationary measures and effect of the mapping] \label{prop:stat_meas_map}
The process $(X(t), \sigma(t))_{t\ge 0}$ is a Markov chain, with stationary measure
\eqs{\nu_{N,K}^{\hbox{\tiny c},\SSEP} := \mathcal{U}(\T_N) \otimes \pi^{\SSEP}_{K,2K-N}.} 

Furthermore, if $(X, \sigma) \sim \mathcal{U}(\T_N) \otimes \pi^{\SSEP}_{K,2K-N}$, then 
\eqs{\Phi^{-1}(X,\sigma) \sim \mathcal{U}(\T_K) \otimes \pi^{\FEP}_{N,K} =: \mu_{N,K}^{\hbox{\tiny r},\FEP}.}
\end{proposition}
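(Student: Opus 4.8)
The plan is to separate the purely measure-theoretic claim (the second assertion) from the dynamical one (stationarity), since the former follows instantly from bijectivity. First I would record the Markov property: $(X(t),\eta(t))$ is Markov by construction (generator $\mathcal{L}^{\FEP, tag}$), the rank $k(t)=\sum_{y=1}^{X(t)}\eta_y(t)$ is a deterministic function of $(X(t),\eta(t))$, and $(X(t),\sigma(t))=\Phi(k(t),\eta(t))$ with $\Phi$ a fixed bijection; hence $(X(t),\sigma(t))$ is the image of a Markov chain under a time-independent bijection and is itself Markov.

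For the second assertion I would simply observe that $\pi^{\SSEP}_{K,2K-N}=\mathcal{U}(\Gamma_{K,2K-N})$ is uniform, so $\nu_{N,K}^{\hbox{\tiny c},\SSEP}=\mathcal{U}(\T_N)\otimes\mathcal{U}(\Gamma_{K,2K-N})$ is exactly the uniform measure on the product set $\T_N\times\Gamma_{K,2K-N}$, i.e. on the codomain of $\Phi$. Since $\Phi$ is a bijection of finite sets, $\Phi^{-1}$ transports the uniform measure on the codomain to the uniform measure on the domain $\T_K\times\mathcal{E}_{N,K}$, which factorises as $\mathcal{U}(\T_K)\otimes\mathcal{U}(\mathcal{E}_{N,K})=\mathcal{U}(\T_K)\otimes\pi^{\FEP}_{N,K}=\mu_{N,K}^{\hbox{\tiny r},\FEP}$. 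No dynamical input is needed here.

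The substance is in the stationarity claim. I would start the chain from $(X(0),\sigma(0))\sim\nu_{N,K}^{\hbox{\tiny c},\SSEP}$, so that $X(0)\sim\mathcal{U}(\T_N)$ is independent of $\sigma(0)$ and of the Poisson clocks driving the evolution. By the dynamic mapping proposition, $(\sigma(s))_{s\ge 0}$ is a SSEP and $X(t)=X(0)+J(t)\bmod N$, where $J(t)$ is the net current through edge $(K,1)$ up to time $t$, a deterministic functional of the trajectory $(\sigma(s))_{0\le s\le t}$. The decisive step is to condition on this whole trajectory: $X(0)$ is independent of it, hence conditionally uniform on $\T_N$, and $X(t)$ is a deterministic (constant) shift of $X(0)$ in the group $\Z/N\Z$, hence also conditionally uniform. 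As this conditional law does not depend on the realisation of the trajectory, $X(t)$ is uniform on $\T_N$ and independent of $(\sigma(s))_{s\le t}$, in particular of $\sigma(t)$. Finally, stationarity of the SSEP gives $\sigma(t)\sim\pi^{\SSEP}_{K,2K-N}$ whenever $\sigma(0)\sim\pi^{\SSEP}_{K,2K-N}$, so $(X(t),\sigma(t))\sim\nu_{N,K}^{\hbox{\tiny c},\SSEP}$ for every $t$, as claimed.

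I expect the main obstacle to be exactly this independence at the heart of the stationarity argument: one must show not just that $X(t)$ stays uniform but that it stays independent of $\sigma(t)$. The trajectory-conditioning above resolves it by freezing the current into a constant and exploiting the translation-invariance of $\mathcal{U}(\T_N)$ on $\Z/N\Z$ together with the independence of the initial tag position $X(0)$ from the SSEP's driving randomness.
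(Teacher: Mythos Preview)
Your argument is correct. For the Markov property and the measure-theoretic second assertion you do exactly what the paper does (bijectivity plus uniformity on a finite product), just phrased more conceptually.

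The only genuine difference is in the stationarity argument. The paper proceeds by writing down the generator of the joint chain,
\[
\mathcal{L}^{\hbox{\tiny c},\SSEP}f(x, \sigma) = \sum_{k=1}^{K-1} \left(f(x, \sigma^{k,k+1}) - f(x,\sigma)\right) + \left(f(x + \sigma_{K}-\sigma_{1}, \sigma^{K,1}) - f(x,\sigma)\right),
\]
and then asserts that $\mathcal{L}^{\hbox{\tiny c},\SSEP}\nu_{N,K}^{\hbox{\tiny c},\SSEP}=0$; this amounts to pairing transitions $(x,\sigma)\to(x',\sigma')$ and $(x',\sigma')\to(x,\sigma)$ and using that the measure puts equal mass everywhere. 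Your route instead exploits the structure of the transitions probabilistically: freeze the entire SSEP trajectory, observe that $X(t)$ is then a deterministic shift of the independent uniform variable $X(0)$ in $\Z/N\Z$, hence still uniform and independent of the trajectory. This avoids any generator computation and makes the independence of $X(t)$ and $\sigma(t)$ transparent rather than implicit. The paper's approach is shorter on the page but leaves the verification as a routine check; yours is more self-contained and explains \emph{why} the product form is preserved.
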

\begin{proof}
It is clear that $(X(t), \sigma(t))_{t\ge 0}$ is a Markov chain, of generator:
\begin{align}
\mathcal{L}^{\hbox{\tiny c},\SSEP}&f(x, \sigma) = \sum_{k=1}^{K-1} \left(\sigma_k(1-\sigma_{k+1} )+ \sigma_{k+1}(1-\sigma_k) \right)\left(f(x, \sigma^{k,k+1}) - f(x,\sigma)\right) \nonumber \\
&+ \left(\sigma_K(1-\sigma_{1} )+ \sigma_{1}(1-\sigma_K) \right) \left(f(x + \sigma_{K}-\sigma_{1}, \sigma^{K,1}) - f(x,\sigma)\right).
\end{align}
For all $(x,\sigma)$, $\mathcal{L}^{\hbox{\tiny c},\SSEP} \nu_{N,K}^{\hbox{\tiny c},\SSEP}  (x,\sigma) = 0.$
Last, for any $(k,\eta)$ in $\T_K \times \mathcal{E}_{N,K}$,
\eqs{\nu_{N,K}^{\hbox{\tiny c},\SSEP}\left(\Phi(k,\eta)\right) = \frac{1}{N}\frac{1}{\binom{K}{2K-N}} = \frac{1}{K} \frac{1}{|\mathcal{E}_{N,K}|} = \mu_{N,K}^{\hbox{\tiny r},\FEP}(k,\eta).}

\end{proof}

This result allows to express the distance to equilibrium of an ergodic FEP and its current in terms of the distance to equilibrium of a SSEP and its current.
\section{Proof of the upper bound}\label{sec:u}
We give ourselves an initial ergodic configuration $\eta\in \mathcal{E}_{N,K}$, and an initial rank 
\eqs{k(0) \sim \mathcal{U}(\T_K),} 
independent from all the process. For all $t\ge 0$, set 
\eq{eq:defXsigma}{(X(t),\sigma(t)) = \Phi(k(t),\eta(t)).}
Our strategy will be to find a time such that $(X(t), \sigma(t))$ has probably been coupled with a $(X',\sigma')$ whose law is close enough to $\mathcal{U}(\T_{N}) \otimes \pi^{\SSEP}_{K,2K-N}$. Then, by Proposition \ref{prop:stat_meas_map}, $d_{\textsc{tv}}(\PP_\eta(\eta(t) \in \cdot), \pi^{\FEP}_{N,K})$ will be small. 
To lighten notation, we give a name to the number of particles $2K-N$ in $\sigma$ and set
 \eqs{P = 2K-N.}
The following proofs are formulated for the case where $2K-N \le K/2$, meaning that the SSEP has less particles than empty sites, but all the proofs can be done for $2K-N > K/2$ by replacing all of the $P$ by $N-K$ (this corresponds to viewing the particles in the SSEP as empty sites and vice versa). 
In Section \ref{subsec:hf} we introduce tools from \cite{lacoin_simple_2017}, then in Sections \ref{subsec:coup} to \ref{subsec:init-h} we prove Proposition \ref{prop:UB}, so we assume that $\frac{N}{2} + a_N < K < N-a_N $. We will prove Proposition \ref{prop:UBc}, which covers the cases $K \le \frac{N}{2} + a_N$ and $K \ge N-a_N$, in Section \ref{subsec:UBc}. The proof follows the same steps as for Proposition \ref{prop:UB}, it is in fact a simpler regime, but we need to prove a coupling time that was not studied in \cite{lacoin_simple_2017}.

 \subsection{The height function representation}
 \label{subsec:hf}
 We now introduce the representation of the couple $(X(t), \sigma(t))$ as a height function. This is a very convenient way to keep track of the joint law of the two coordinates. We inspire ourselves from \cite{lacoin_simple_2017} where this is used as a tool to develop a coupling between the SSEP and its equilibrium measure. In the case of \cite{lacoin_simple_2017}, only the SSEP part is looked at, but here we will use to our advantage the fact that when height functions couple, not only the SSEP parts but also the first coordinates have coupled. We hereafter give definitions and some useful properties from \cite{lacoin_simple_2017}.
 
\begin{definition}[Height function associated with a couple] Let $(Y, \sigma) \in \Z \times \Gamma_{K,P}$. The map to a height function is defined as
\begin{equation}
 	\Psi:
    \begin{cases}
        \Z \times \Gamma_{K,P} \to \R^{\T_K}\\
        (Y,\sigma) \mapsto  \zeta,
    \end{cases}       
    \end{equation}
where
 \begin{equation}
    \begin{cases}
        \zeta_0 =\zeta_K = - Y\\
        \zeta_k = \zeta_{k-1} +  \sigma_k-\frac{P}{K} \quad \forall \, 1\le k<K.
    \end{cases}       
    \end{equation}
\end{definition} 
In this construction, when $\sigma$ has a particle at site $k$, $\zeta_k$ goes up by $1-\frac{P}{K}$, otherwise it goes down by $\frac{P}{K}$. Notice that we allow the first coordinate $Y$ to be any integer and do not take it modulo $N$ yet, this will be useful for us to put height functions one above another. Set 
 \eqs{\Omega_{K,P} = \Psi(\Z \times \Gamma_{K,P})}
 the set of possible height functions. Then $\Psi$ is bijective from $\Z \times \Gamma_{K,P}$ to $\Omega_{K,P}$.

We define a Markov chain on $\Omega_{K,P}$ called the corner-flip dynamics, following the definition from \cite[Section 5]{lacoin_simple_2017}.
\begin{definition}[Corner-flip dynamics]
For $\zeta \in \Omega_{K,P}$ and $k \in \T_K$, define $\zeta^k$ the configuration with a flip at $k$ such that 
\begin{equation}
    \begin{cases}
        \zeta^k_l =\zeta_l \quad \forall \, l \neq k\\
        \zeta^k_k = \zeta_{k+1} + \zeta_{k-1} - \zeta_k.
    \end{cases} 
    \end{equation}
    Then, the corner-flip dynamics on $\Omega_{K,P}$ is the Markov chain such that for all $k$, $\zeta$ goes to $\zeta^k$ at rate 1 and other transitions are not possible. 
\end{definition}
This dynamics corresponds to turning a local maximum at $k$ into a local minimum by reducing $\zeta_k$ by 1, and vice versa, see Figure \ref{fig:height_current} for an illustration. Then we have the following property:
 \begin{proposition}[Dynamic mapping with the height function]\label{prop:map_height}
 Consider $(\sigma(t))_{t\ge 0}$ a SSEP started from $\sigma$ and $Y(0) \in \mathbb{Z}$. Set, for all $t$, $Y(t) - Y(0)$ the total current through the edge $(K,1)$ in $(\sigma(s))_{s\le t}$ and $\zeta(t) = \Psi(Y(t),\sigma(t))$. Then $(\zeta(t))_{t\ge 0}$ follows corner-flip dynamics as defined above.
 \end{proposition}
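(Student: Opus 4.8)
The plan is to pull the corner-flip generator back through the bijection $\Psi$ and to check that it coincides, transition by transition and rate by rate, with the joint dynamics of the SSEP $\sigma(t)$ and its current $Y(t)$. The starting point is the observation that the increments of $\zeta = \Psi(Y,\sigma)$ encode $\sigma$ at \emph{every} site of the cycle: one has $\zeta_k - \zeta_{k-1} = \sigma_k - \frac{P}{K}$ not only for $1 \le k < K$ but also for $k = K$. This last identity is where the constraint $|\sigma| = P$ enters, since $\sum_{k=1}^{K}\left(\sigma_k - \frac{P}{K}\right) = 0$ forces the profile to close up around $\T_K$, so that the prescribed boundary value $\zeta_0 = \zeta_K = -Y$ is consistent with the slope rule across the seam edge $(K,1)$. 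Thus $\zeta$ is genuinely a height function on $\T_K$ whose increments record the occupation variables and whose base value records minus the current.

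First I would compute the effect of a single corner flip at a site $k$ on these increments. A direct calculation shows that flipping at $k$ leaves the new increment at $k$ equal to the old increment at $k+1$ and vice versa; that is, the flip swaps the two adjacent increments $\zeta_k - \zeta_{k-1}$ and $\zeta_{k+1} - \zeta_k$, hence swaps $\sigma_k$ and $\sigma_{k+1}$ and leaves every other occupation variable untouched. For an interior site $1 \le k \le K-1$ this is exactly the SSEP transition across the edge $(k,k+1)$, and since $\zeta_K = \zeta_0$ is unchanged the current $Y = -\zeta_K$ is preserved, as it must be because no particle crosses the origin. The flip acts trivially precisely when $\sigma_k = \sigma_{k+1}$, matching the fact that swapping equal occupations does nothing.

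The delicate point, and the step I expect to be the main obstacle, is the flip at the seam site $k = K$, whose neighbours in $\T_K$ are $K-1$ and $K+1 \equiv 1$, and where the base value itself moves. Here I would establish two facts simultaneously: that the flip swaps $\sigma_K$ and $\sigma_1$ (the SSEP transition across edge $(K,1)$), and that the base value updates by exactly the current through that edge. Writing the change of the base value through the increments, $\zeta^K_K - \zeta_K = (\zeta_1 - \zeta_0) - (\zeta_K - \zeta_{K-1})$, one gets $Y \mapsto Y + \sigma_K - \sigma_1$, which is $+1$ when a particle crosses from $K$ to $1$, $-1$ when one crosses from $1$ to $K$, and $0$ otherwise, i.e. exactly the signed current through $(K,1)$. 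This identifies the seam flip with the SSEP swap across $(K,1)$ together with the correct increment of $Y$. Finally, since each corner flip and each SSEP edge fire independently at rate $1$ and $k \mapsto (k,k+1)$ is a bijection between flip sites and edges of $\T_K$, the pulled-back generator agrees with that of $(Y(t),\sigma(t))$; as $\Psi$ is a bijection onto $\Omega_{K,P}$, the process $\zeta(t) = \Psi(Y(t),\sigma(t))$ is precisely the corner-flip dynamics started from $\Psi(Y(0),\sigma(0))$.
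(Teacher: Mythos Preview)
Your proof is correct and follows essentially the same approach as the paper: both verify transition by transition that a corner flip at $k$ corresponds to the SSEP swap across edge $(k,k+1)$, with the seam flip at $k=K$ additionally updating $\zeta_K=-Y$ by the signed current through $(K,1)$. Your version is simply more explicit algebraically (computing the increment swap and the change $\zeta^K_K-\zeta_K=\sigma_1-\sigma_K$ directly), whereas the paper phrases the seam case in terms of a local maximum becoming a local minimum and vice versa; the content is the same.
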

 
 \begin{figure}
 \centering
 \includegraphics[width=0.5\textwidth]{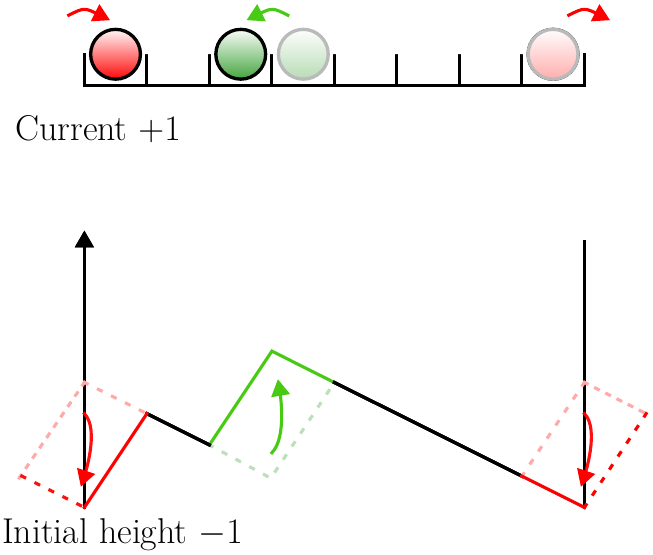}
 \caption{Illustration of the dynamic mapping and the link between current and initial height.}
  \label{fig:height_current}
 \end{figure}
 
 \begin{proof}
We analyse all possible transitions, which are illustrated in Figure \ref{fig:height_current}. It is clear that a particle jump in $\sigma$ induces a corner flip in $\zeta$: going from $\bullet\circ$ to $\circ \bullet$ in the SSEP means we go from ``up-down'' to ``down-up'' in the height function, and vice versa. This is in fact a classical representation for one-dimensional exclusion processes, used for example in \cite{wilson_mixing_2004}. Consider now a jump across the origin. If this jump increases the current $Y$, we go from a local maximum to a local minimum at 0, so $\zeta_0$ decreases by 1. In this case, $\zeta_0$ has the same evolution as $-Y$. Similarly, if a jump decreases the current, it means we go from a local minimum to a local maximum at 0, so that $\zeta_0$ increases by 1, and again $\zeta_0$ has the same evolution as $-Y$. 
\end{proof}

The following properties on the fluctuations of the SSEP will be useful to study the height functions.

 \begin{proposition}[Fluctuations of the density in the SSEP]\cite[Proposition 3.2]{lacoin_simple_2017}
 \label{prop:flu_ssep}
There exists $c>0$ such that for $N$ sufficiently large, for all $\sigma \in \Gamma_{K,P}$ and $s \ge 16$, if $t \ge \frac{1}{8\pi^2}K^2\log P$,
 \eqs{\textnormal{\bP}_\sigma\left(\exists k,l \in \T_K, \left|\sum_{j=k}^l \left(\sigma_j(t) - \frac{P}{K}\right) \right| \ge s \sqrt{P}\right) \le 2 \exp(-cs^2).}
 \end{proposition}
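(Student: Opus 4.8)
The plan is to read the event through the height-function representation. By definition of $\Psi$ we have $\sum_{j=k+1}^l\pa{\sigma_j(t)-\tfrac{P}{K}} = \zeta_l(t)-\zeta_k(t)$, so the probability in question is exactly the probability that the range $\max_k\zeta_k(t)-\min_k\zeta_k(t)$ of the corner-flip height function exceeds $s\sqrt P$. By Proposition \ref{prop:map_height}, $(\zeta(t))$ evolves by corner-flip dynamics, and I would split each increment into its mean and its fluctuation, $\zeta_l(t)-\zeta_k(t) = \E[\zeta_l(t)-\zeta_k(t)] + \pa{\zeta_l(t)-\zeta_k(t)-\E[\zeta_l(t)-\zeta_k(t)]}$, controlling the two contributions separately and allocating a budget $\tfrac{s}{2}\sqrt P$ to each.

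For the mean, the key point is that $k\mapsto\E[\zeta_k(t)]$ solves the discrete heat equation on $\T_K$, since the expectations of the SSEP occupation variables are linear and evolve under the discrete Laplacian. Expanding the centered initial profile in the Fourier basis of $\T_K$, the mode of frequency $m$ decays like $e^{-\lambda_m t}$ with $\lambda_m = 2\pa{1-\cos(2\pi m/K)}$; the slowest nonzero mode satisfies $\lambda_1\sim \tfrac{4\pi^2}{K^2}$, so at the threshold time $t=\tfrac{1}{8\pi^2}K^2\log P$ it is damped by $e^{-\lambda_1 t}\approx P^{-1/2}$. Since the initial height deviations are bounded in sup norm by the total particle number $P$, this yields $\sup_{k,l}\abs{\E[\zeta_l(t)-\zeta_k(t)]} = \mathcal{O}(\sqrt P)$ uniformly over the initial configuration, which is exactly where the constant $\tfrac{1}{8\pi^2}$ enters. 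This bookkeeping is routine once the Fourier decomposition is in place.

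For the fluctuation I would first record a second-moment estimate: the two-point function $\E[\sigma_i(t)\sigma_j(t)]$ obeys a closed linear evolution (a discrete heat equation on ordered pairs with a reflecting condition on the diagonal), from which one extracts $\mathrm{Var}\pa{\sum_{j=k+1}^l\sigma_j(t)} \le C\,\tfrac{P}{K}\abs{l-k}\le C\min(\abs{l-k},P)$, uniformly for $t$ past the threshold. Combined with the negative association of the SSEP occupation variables at a fixed time (alternatively, an exponential-martingale argument), this gives a sub-Gaussian tail $\bP_\sigma\pa{\abs{\zeta_l(t)-\zeta_k(t)-\E[\zeta_l(t)-\zeta_k(t)]}\ge r}\le 2\exp\pa{-cr^2/\min(\abs{l-k},P)}$ for each fixed pair.

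The main obstacle is upgrading this pointwise control to the uniform-in-$(k,l)$ statement while keeping the clean $\exp(-cs^2)$ tail for every $s\ge 16$: a naive union bound over the $\sim K^2$ intervals loses a factor $K^2$ that cannot be absorbed for fixed $s$, precisely because the intervals of length $\sim K/2$ are numerous yet individually only sub-Gaussian at scale $\sqrt P$. To circumvent this I would run a dyadic chaining over the $\log K$ scales of the circle: at scale $2^j$ there are $\sim K/2^j$ blocks, each with fluctuation variance $\mathcal{O}(\tfrac{P}{K}2^j)$ (capped at $P$ at the top scale), and I would allocate the deviation budget $\tfrac{s}{2}\sqrt P$ geometrically across scales so that the per-scale contributions $\exp(-cs_j^2)$ telescope into a single $\exp(-cs^2)$. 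The capping of the variance at $P$ is what makes the associated entropy sum converge to $\mathcal{O}(\sqrt P)$ and removes the would-be $\sqrt{\log K}$ loss. Combining the chained fluctuation maximum with the $\mathcal{O}(\sqrt P)$ mean bound, and using $s\ge 16$ to absorb the $\mathcal{O}(1)$ constants (in particular the entropy term), yields the stated inequality.
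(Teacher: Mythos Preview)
The paper does not prove this statement: it is imported from \cite[Proposition~3.2]{lacoin_simple_2017} and used as a black box. There is therefore no in-paper argument to compare your sketch against.

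On its own terms, your mean--fluctuation split and the heat-equation analysis of $\E[\zeta_k(t)]$ are correct and correctly pin down the constant $\tfrac{1}{8\pi^2}$: the centred initial height has sup norm $\le P$, and at $t=\tfrac{K^2}{8\pi^2}\log P$ the first Fourier mode is damped by $P^{-1/2}$, giving $\sup_{k,l}\abs{\E[\zeta_l(t)-\zeta_k(t)]}=O(\sqrt P)$. The fluctuation half is where the sketch is not closed. First, the bound $\mathrm{Var}\pa{\sum_{j\in I}\sigma_j(t)}\le C\tfrac{P}{K}\abs{I}$ is not what the two-point evolution hands you; what negative association gives cleanly is $\mathrm{Var}\le\sum_{j\in I}p_j(1-p_j)$, and it is the \emph{global} constraint $\sum_j p_j=P$ (hence $\epsilon$-covering number $\lesssim P/\epsilon^2$ in the variance metric, hence Dudley integral $O(\sqrt P)$) that removes the $\sqrt{\log K}$, rather than any per-site estimate. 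Second, the single-interval tail from negative association is Bernstein-type (sub-Gamma with scale $\tfrac13$), not purely sub-Gaussian; your geometric budget allocation tacitly assumes sub-Gaussian increments, and the sub-exponential part of the chaining does not automatically collapse into $\exp(-cs^2)$ at the finest scales when $P$ is small relative to $\log K$. This is fixable (stop the chaining at blocks of mean occupancy $O(1)$, bound the residual via a Poisson-type tail, and note the event is vacuous once $s>\sqrt P$), but the sketch as written does not address it.
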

 
  \begin{proposition}[Fluctuations of the density in the stationary SSEP]\cite[Remark 4.3]{lacoin_simple_2017}
There exists $c>0$ such that for $N$ sufficiently large, for all $s \ge 0$, if $\sigma \sim \pi_{K,P}^{\SSEP}$,
 \eqs{\pi_{K,P}^{\SSEP}\left(\exists k,l \in \T_K, \left|\sum_{j=k}^l \left(\sigma_j - \frac{P}{K} \right)\right| \ge s \sqrt{P}\right) \le 2 \exp(-cs^2).}
 \end{proposition}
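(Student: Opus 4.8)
The plan is to read off this equilibrium estimate directly from the dynamic bound of Proposition \ref{prop:flu_ssep} by letting time go to infinity, supplemented by a trivial estimate for small $s$. The key observation is that Proposition \ref{prop:flu_ssep} already controls the maximal fluctuation over \emph{all} intervals simultaneously, so there is no union bound to redo: the only work is to transfer the bound from the time-$t$ law of the SSEP to its invariant law $\pi_{K,P}^{\SSEP}$.

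Concretely, for $s \ge 0$ I would introduce the fixed event $A_s = \{\sigma \in \Gamma_{K,P} : \exists\, k,l \in \T_K,\ \abs{\sum_{j=k+1}^l (\sigma_j - P/K)} \ge s\sqrt{P}\}$, a subset of the finite set $\Gamma_{K,P}$, so that the left-hand side to be bounded is exactly $\pi_{K,P}^{\SSEP}(A_s)$ and Proposition \ref{prop:flu_ssep} reads $\bP_\sigma(\sigma(t) \in A_s) \le 2\exp(-cs^2)$ for every starting configuration $\sigma$, every $s \ge 16$, and every $t \ge \frac{1}{8\pi^2}K^2\log P$. First I would recall that the SSEP restricted to $\Gamma_{K,P}$ is an irreducible continuous-time Markov chain on a finite state space, reversible with respect to the uniform measure, whose unique stationary law is therefore $\pi_{K,P}^{\SSEP}$; hence $\bP_\sigma(\sigma(t) \in A) \to \pi_{K,P}^{\SSEP}(A)$ as $t \to \infty$ for every event $A$ and every $\sigma$. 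Fixing $s \ge 16$ and an arbitrary $\sigma$, and applying Proposition \ref{prop:flu_ssep} along a sequence $t \to \infty$ (every large enough $t$ satisfies the hypothesis $t \ge \frac{1}{8\pi^2}K^2\log P$), I would conclude $\pi_{K,P}^{\SSEP}(A_s) = \lim_{t\to\infty} \bP_\sigma(\sigma(t) \in A_s) \le 2\exp(-cs^2)$, which is the claim for $s \ge 16$ with the constant $c$ of Proposition \ref{prop:flu_ssep}.

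It remains to cover $0 \le s < 16$, and here I would simply shrink the constant. Setting $c' = \min\!\big(c, (\log 2)/256\big)$, for every $s \le 16$ one has $2\exp(-c's^2) \ge 2\exp(-256\,c') \ge 1 \ge \pi_{K,P}^{\SSEP}(A_s)$, while for $s \ge 16$ the previous paragraph gives $\pi_{K,P}^{\SSEP}(A_s) \le 2\exp(-cs^2) \le 2\exp(-c's^2)$ since $c' \le c$. Thus the stated inequality holds for all $s \ge 0$ with the constant $c'$, and for $N$ large enough as inherited from Proposition \ref{prop:flu_ssep}.

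Once Proposition \ref{prop:flu_ssep} is granted there is essentially no hard step, precisely because the maximum over intervals is already built into it. The only real difficulty arises if one insists on a self-contained proof: the natural route is to bound the range $\max_k \zeta_k - \min_k \zeta_k \le 2\max_k \abs{\zeta_k}$ of the bridge $\zeta_k = \sum_{j=1}^k(\sigma_j - P/K)$ and to concentrate each increment by a Bernstein/Hoeffding inequality for sampling without replacement (using that $\abs{\zeta_l - \zeta_k} \le P$, which already makes $A_s$ empty once $s > \sqrt{P}$). The obstacle there is that a crude union bound over the $\mathcal{O}(K^2)$ pairs $(k,l)$ loses a factor $K^2$ and only yields the clean bound $2\exp(-cs^2)$ when $s$ is of order $\sqrt{\log K}$ or larger; obtaining it for all $s$ from scratch would require a maximal inequality for the exchangeable partial sums $(\zeta_k)$, which is exactly the work that taking the $t\to\infty$ limit in Proposition \ref{prop:flu_ssep} spares us.
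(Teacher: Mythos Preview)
Your argument is correct: taking $t\to\infty$ in Proposition~\ref{prop:flu_ssep} along any fixed initial state, using ergodicity of the SSEP on the finite set $\Gamma_{K,P}$, immediately yields the stationary bound for $s\ge 16$, and your shrinking of the constant to $c'=\min(c,(\log 2)/256)$ cleanly covers the range $0\le s<16$.

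That said, there is nothing to compare against: the paper does not prove this proposition but simply quotes it from \cite[Remark~4.3]{lacoin_simple_2017}, where the stationary estimate is obtained directly (via a concentration bound for the hypergeometric distribution and a maximal/chaining argument for the bridge $(\zeta_k)$), independently of the dynamic Proposition~\ref{prop:flu_ssep}. Your route is in a sense more economical within the logic of the present paper, since it recycles the dynamic bound rather than reproving the combinatorial estimate; the price is a circular-looking dependence on \cite{lacoin_simple_2017}, but since both statements are ultimately imported from that reference this is harmless. Your final paragraph correctly identifies why a naive self-contained proof via a union bound over all $(k,l)$ would lose a logarithmic factor.
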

 
 Last, we will use the coupling and the time needed to couple from \cite{lacoin_simple_2017}. Notice that all height functions of $\Omega_{K,P}$ have their points on the lattice 
 \eqs{\Lambda_{K,P}:=\{(k, m - kP/K), k \in \T_K, m \in \Z\}.} We fix a family of i.i.d. rate 1 Poisson processes $\{\mathcal{T}_{(k,h)}^{\uparrow}, \mathcal{T}_{(k,h)}^{\downarrow}, (k,h) \in \Lambda_{K,P}\}$. There are two clocks per point of the lattice, and in the construction of \cite[Section 5.3]{lacoin_simple_2017}, the clock $\mathcal{T}^{\uparrow}_{(k,h)}$ will prompt upward corner flips from point $(k,h)$, and $\mathcal{T}^{\downarrow}_{(k,h)}$ will prompt downward corner flips from point $(k,h)$. We define more precisely the coupling here. 
\begin{definition} \cite[Section 5.3]{lacoin_simple_2017} One can construct the trajectory of a height function $\zeta$ given the clocks $\{\mathcal{T}_{(k,h)}^{\uparrow/\downarrow}, (k,h) \in \Lambda_{K,P}\}$ by making $(\zeta(t))_{t\ge 0}$ to be càdlàg and to jump only when the $\mathcal{T}_{(k,\zeta_k(t))}^{\uparrow/\downarrow}$ jump, with the following effects:
\begin{itemize}
\item If $\mathcal{T}_{(k,\zeta_k(t^-))}^{\downarrow}$ jumps at time $t$ and if $\zeta_{k-1}(t^-) = \zeta_k(t^-) - 1 = \zeta_{k+1}(t^-)$, then $\zeta(t)=\zeta^k(t^-)$;
\item If $\mathcal{T}_{(k,\zeta_k(t^-))}^{\uparrow}$ jumps at time $t$ and if $\zeta_{k-1}(t^-) = \zeta_k(t^-) + 1 = \zeta_{k+1}(t^-)$, then $\zeta(t)=\zeta^k(t^-)$;
\item In other cases, do nothing.
\end{itemize}
Using the same set of clocks for different height functions defines a coupling between height functions, that we denote by $\Q$.
\end{definition}
We will use the following properties of this coupling.
 \begin{proposition}[Monotone coupling] \label{prop:monot}
 Let $\zeta, \zeta', \zeta'' \in \Omega_{K,P}$ such that $\zeta \le \zeta'\le \zeta''$. Then, under $\Q$, 
 $\zeta(t)$ (resp. $\zeta'(t)$,$\zeta''(t)$) has the law of corner-flip dynamics started from $\zeta$ (resp. $\zeta'$, $\zeta''$) at time $t$, and for all $t \ge 0$, $\zeta(t) \le \zeta'(t)\le \zeta''(t)$ $\Q$-a.s.
 \end{proposition}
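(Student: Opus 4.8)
The plan is to build a single \emph{graphical construction} (a grand coupling) driven by shared Poisson clocks, under which each coordinate follows corner-flip dynamics while the pointwise order between height functions is preserved at every update. The key is to split the rate-$1$ corner flip into two \emph{monotone} half-moves. Recall that for $\zeta\in\Omega_{K,P}$ the increments satisfy $\zeta_k-\zeta_{k-1}=\sigma_k-\frac{P}{K}\in\{1-\frac{P}{K},-\frac{P}{K}\}$, so a site $k$ is a local minimum exactly when $\sigma_k=0,\sigma_{k+1}=1$ and a local maximum exactly when $\sigma_k=1,\sigma_{k+1}=0$; moreover a direct check shows the corner flip raises $\zeta_k$ by exactly $1$ at a local minimum, lowers it by exactly $1$ at a local maximum, and does nothing on a straight segment (where $\zeta^k=\zeta$). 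I would attach to every site $k\in\T_K$ two independent rate-$1$ Poisson clocks, an up-clock and a down-clock, all clocks mutually independent. When the up-clock at $k$ rings I apply, to all three height functions simultaneously, the map $\zeta_k\mapsto \zeta_k+\un_{\{k\text{ is a local minimum of }\zeta\}}$ (leaving every other coordinate unchanged), and when the down-clock rings I apply $\zeta_k\mapsto \zeta_k-\un_{\{k\text{ is a local maximum of }\zeta\}}$.

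Checking the marginals is routine: at any instant a site is a local minimum, a local maximum, or on a straight segment. In the first case only its up-clock produces a nontrivial flip, at rate $1$; in the second only its down-clock does, at rate $1$; on a straight segment neither clock changes anything, consistent with $\zeta^k=\zeta$. Hence each of $\zeta(t),\zeta'(t),\zeta''(t)$ has the generator of corner-flip dynamics started from the prescribed configuration.

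The heart of the argument is order preservation, and it suffices to show a single update preserves $\le$: since only the updated coordinate $k$ can change, I only need $\zeta^{\mathrm{new}}_k\le(\zeta')^{\mathrm{new}}_k$ whenever $\zeta\le\zeta'$. Two ingredients drive this. First, \emph{integrality}: writing $\zeta=\Psi(Y,\sigma)$ and $\zeta'=\Psi(Y',\sigma')$, one has $\zeta_k-\zeta'_k=-(Y-Y')+\sum_{j=1}^{k}(\sigma_j-\sigma'_j)\in\Z$ because the $-k\frac{P}{K}$ terms cancel. Second, the \emph{two-valued increment constraint}. For an up-clock ring at $k$: if $\zeta_k<\zeta'_k$ then integrality gives $\zeta_k\le\zeta'_k-1$, so $\zeta^{\mathrm{new}}_k\le \zeta_k+1\le\zeta'_k\le(\zeta')^{\mathrm{new}}_k$; if instead $\zeta_k=\zeta'_k$ I must show that $k$ being a local minimum of $\zeta$ forces it to be one of $\zeta'$. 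This follows since $\zeta'_{k\pm1}\ge\zeta_{k\pm1}$ combined with the constraint pushes the neighbouring increments of $\zeta'$ to their extreme admissible values: $\zeta'_{k-1}\ge\zeta_{k-1}=\zeta'_k+\frac{P}{K}$ forces $\sigma'_k=0$, and $\zeta'_{k+1}\ge\zeta_{k+1}=\zeta'_k+1-\frac{P}{K}$ forces $\sigma'_{k+1}=1$. The down-clock case is symmetric: at a contact $\zeta_k=\zeta'_k$, $k$ being a local maximum of $\zeta'$ forces it to be one of $\zeta$. Thus every update preserves $\le$; applying this to the pairs $(\zeta,\zeta')$ and $(\zeta',\zeta'')$ under the common clocks, and using that on the finite state space only finitely many clocks ring in any finite interval almost surely, yields $\zeta(t)\le\zeta'(t)\le\zeta''(t)$ for all $t\ge0$, $\Q$-a.s.

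The one genuinely delicate point, and the step I would write out in full, is the contact case $\zeta_k=\zeta'_k$ above: without the integrality observation one cannot exclude $0<\zeta'_k-\zeta_k<1$, and without the fixed two-value increment structure a local extremum of one path need not be inherited by the other. Everything else—independence of the clocks, correctness of the marginals, and the passage from single-update monotonicity to almost-sure monotonicity for all $t$—is standard.
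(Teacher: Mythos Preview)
Your proof is correct and is precisely the standard graphical construction for monotone corner-flip dynamics. The paper does not actually prove this proposition; it simply imports the coupling from \cite[Section 5.3]{lacoin_simple_2017}, and your up-clock/down-clock construction together with the integrality-plus-contact argument is exactly the argument given there.
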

 \begin{proposition}[Coupling of two height functions] \label{prop:coupl}
\cite[Proposition 5.3]{lacoin_simple_2017} Assume that $P=P(K)$ goes to infinity with $K$, and $P\le K/2$ for all $K$. Let $\sigma \sim \pi^{\SSEP}_{K,P}$, let $x \in \Z$ and $H \ge 0$. Let $\zeta^{(1)}=\zeta^{(1),H} = \Psi(x+H, \sigma)$ and $\zeta^{(2)}=\zeta^{(2),H} = \Psi(x-H, \sigma)$. Notice that $\zeta^{(1)} \ge \zeta^{(2)}$. 
For all $s>0$ and $\varepsilon \in (0,1)$, there exists $C(s,\varepsilon)>0$ such that if $H \le s \sqrt{P}$, for all $t \ge C(s, \varepsilon) K^2$,
\eqs{\Q_{(\zeta^{(1)}, \zeta^{(2)})}\left(\zeta^{(1)}(t) \neq \zeta^{(2)}(t)\right) \le \varepsilon,}
using the same coupling as before.
 \end{proposition}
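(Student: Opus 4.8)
The plan is to run $\zeta^{(1)}$ and $\zeta^{(2)}$ under the monotone coupling $\Q$ of Proposition~\ref{prop:monot}, so that the ordering $\zeta^{(1)}(t) \ge \zeta^{(2)}(t)$ is preserved for all $t \ge 0$, $\Q$-almost surely, and so that the two coordinates remain equal once they meet. Then the coalescence time $\tau = \inf\{t \ge 0 : \zeta^{(1)}(t) = \zeta^{(2)}(t)\}$ satisfies $\{\zeta^{(1)}(t) \neq \zeta^{(2)}(t)\} = \{\tau > t\}$, so it suffices to prove $\Q(\tau > C(s,\varepsilon) K^2) \le \varepsilon$ for a suitable constant. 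I would track the non-negative gap profile $D_k(t) = \zeta^{(1)}_k(t) - \zeta^{(2)}_k(t) \ge 0$ and its area $A(t) = \sum_{k \in \T_K} D_k(t) \ge 0$. Since the two initial height functions are vertical translates of one another, $D_k(0) = 2H$ for every $k$, whence $A(0) = 2HK \le 2 s K \sqrt{P}$.

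The first ingredient is a confinement estimate. Because $\sigma \sim \pi_{K,P}^{\SSEP}$ and $\pi_{K,P}^{\SSEP}$ is stationary for the SSEP marginal of the corner-flip dynamics, at every time $t$ the configurations underlying $\zeta^{(1)}(t)$ and $\zeta^{(2)}(t)$ are both distributed according to $\pi_{K,P}^{\SSEP}$. Applying the fluctuation bound for the stationary SSEP stated above at each point of a fine deterministic mesh of $[0, C K^2]$ and taking a union bound (controlling the $\mathcal{O}(1)$-per-flip variation between mesh points), I would show that with probability at least $1-\varepsilon/2$ each interface stays within a tube of width of order $\sqrt{P}$ around the line of slope $P/K$, uniformly over the whole time window. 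As both interfaces then lie within $\mathcal{O}_\varepsilon(\sqrt{P})$ of the same line, one gets $\max_{k} D_k(t) = \mathcal{O}_\varepsilon(\sqrt{P})$ throughout, hence $A(t) = \mathcal{O}_\varepsilon(K\sqrt{P})$. Confinement to the \emph{equilibrium} scale $\sqrt{P}$, rather than the maximal scale $P$, is exactly what lets us avoid a spurious logarithmic factor and reach the $C(s,\varepsilon)K^2$ time scale.

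It then remains to bound the coalescence time given that the vertical gap to be closed is only of order $\sqrt{P}$. The area $A(t)$ is a non-negative supermartingale whose jumps have size $\mathcal{O}(1)$, whose quadratic variation accumulates at a rate controlled by the number of discordant corners of $D$, and whose downward drift is governed by the discrete curvature of $D$. I would combine the supermartingale property with the confinement of the previous step through an optional-stopping argument for $A$ stopped at $\tau \wedge C K^2$: the bounded range $\mathcal{O}_\varepsilon(K\sqrt{P})$, together with $P \le K/2$, forces $A$ to reach $0$ within the diffusive time $\mathcal{O}_\varepsilon(K^2)$ with probability at least $1-\varepsilon/2$, by adapting the quantitative coupling estimates of \cite{lacoin_simple_2017}.

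The main obstacle is this last step, and the difficulty is structural: a flat profile — which is precisely the initial condition $D \equiv 2H$ — has zero drift, so coalescence cannot be produced by the drift alone. One must instead exploit that the martingale fluctuations first erode the flat profile, after which the curvature-driven drift takes over. Quantifying this interplay so as to extract the sharp $\mathcal{O}_\varepsilon(K^2)$ hitting-time bound — and in particular not losing the logarithmic factor that a crude spectral-gap contraction of $\E[A(t)]$ down to scale $1$ would produce — is the crux, and is where the coupling and fluctuation machinery of \cite{lacoin_simple_2017} must be carefully adapted to our setting.
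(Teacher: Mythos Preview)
The paper does not prove this proposition; it is imported directly as \cite[Proposition~5.3]{lacoin_simple_2017} and used as a black box. So there is no in-paper argument to compare your proposal against beyond the citation itself.

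Your outline is in the spirit of Lacoin's approach: run the monotone coupling, track the area $A(t)=\sum_{k\in\T_K} D_k(t)$ between the ordered interfaces, and exploit that $\sigma\sim\pi_{K,P}^{\SSEP}$ is stationary so the equilibrium fluctuation bound (Remark~4.3 in \cite{lacoin_simple_2017}, quoted above) keeps the vertical spread on scale $\sqrt{P}$ at all times. That is the correct skeleton, and your computation $A(0)=2HK\le 2sK\sqrt{P}$ is right.

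One technical correction: under the graphical monotone coupling on the ring, $A(t)$ is a \emph{martingale}, not a strict supermartingale --- there is no boundary and hence no systematic downward drift. Your phrase ``downward drift governed by the discrete curvature of $D$'' describes the heat-equation behaviour of the \emph{mean} profile $\E[D_k(t)]$, not the coupled process $A(t)$ itself; coalescence comes from the quadratic variation of $A$ combined with the fact that $A$ is nonnegative and absorbed at $0$. So the picture in your last paragraph (``martingale fluctuations erode the flat profile, after which the curvature-driven drift takes over'') is not quite the mechanism. This does not break your plan, but the optional-stopping argument has to be run on a genuine martingale with a quadratic-variation lower bound, not on a supermartingale with drift. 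Since your proposal, like the paper, ultimately defers the crux to the machinery of \cite{lacoin_simple_2017}, the two treatments are effectively aligned.
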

 
 \subsection{Coupling with near-equilibrium}
 \label{subsec:coup}
  \begin{figure}
 \centering
 \includegraphics[width=0.9\textwidth]{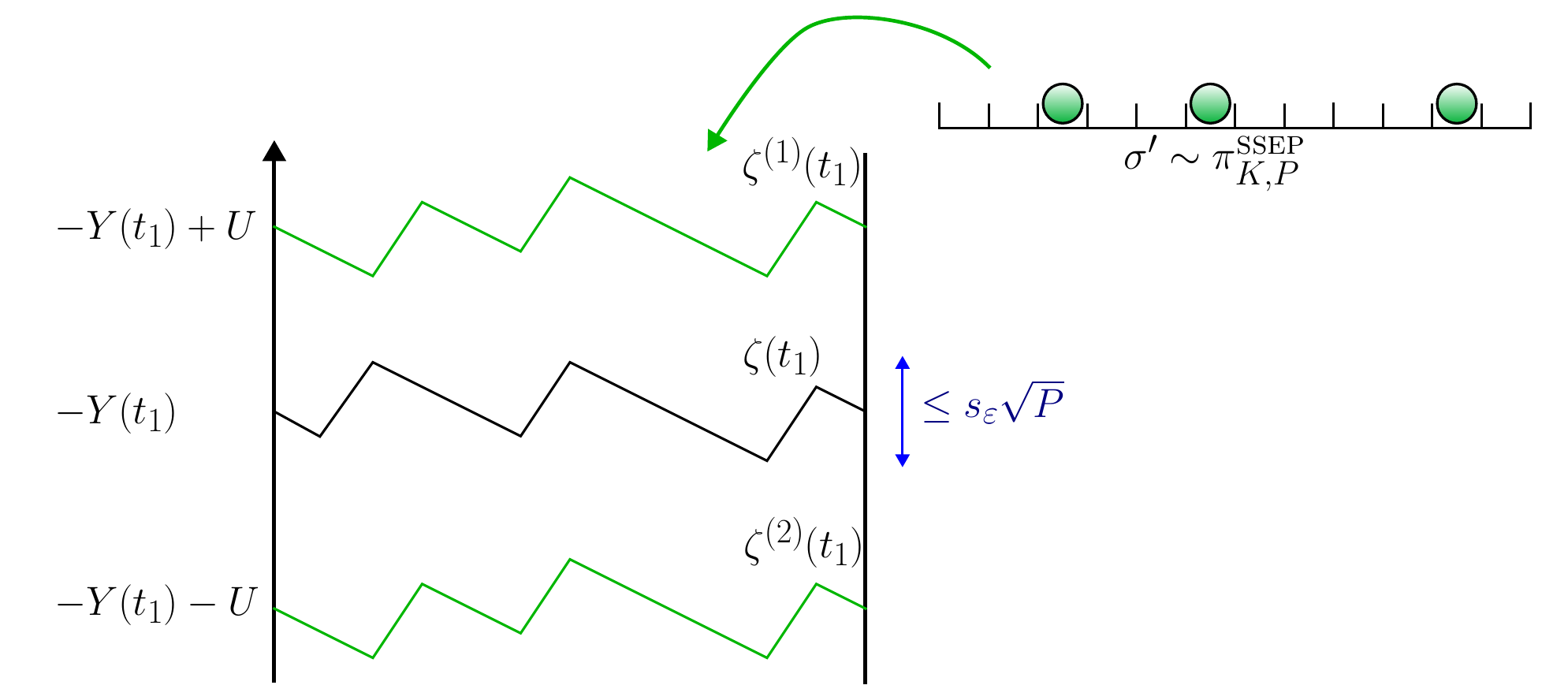}
 \caption{Summary of the coupling strategy. The maximum height difference of $\zeta$ is controlled by Proposition \ref{prop:flu_ssep}. This gives us the inequality $\zeta^{(2)} \le \zeta \le \zeta^{(1)}$. Lemma \ref{lem:init_height} tells us the initial heights of $\zeta^{(1)}$ and $\zeta^{(2)}$ modulo $N$ are close to being uniform. So when all height functions are coupled, $(X(t), \sigma(t))$ is close to $\mathcal{U}(\T_N) \otimes \pi_{K,P}^{\SSEP}$.}
 \label{fig:height_coup}
 \end{figure}
We assume that $P> 2 a_N$ (or $P> a_N$ if $N-K<2K-N$). We wish to find a time such that the height function associated to $\left(X(t),\sigma(t)\right)$,  defined in \eqref{eq:defXsigma}, has been coupled to a height function close to stationarity with probability greater than $1-\varepsilon$.
 The strategy for this coupling is summarised in Figure \ref{fig:height_coup}. Recall we consider, for all $t$, $\left(X(t), \sigma(t)\right)$ where $X(t) \in \T_N$ and $X(t)-X(0)$ gives the total current (modulo $N$) that went through the edge $(K,1)$ in $(\sigma(s))_{s\le t}$. It will be more convenient for our purpose to work with $(Y(t), \sigma(t))$, where $Y(0) = X(0)$ and for all $t$, $Y(t) - Y(0) \in \Z$ is the total current through the origin in $(\sigma(s))_{s\le t}$, not modulo $N$. It is then easy to go back to $X(t)$ by taking $Y(t)$ modulo $N$. 
 
 Now, for all $t$, we set  
 \eqs{\zeta(t)=\Psi(Y(t), \sigma(t))}
  the associated height function. Since we considered the current in $\Z$ and not modulo $N$, we can use the dynamic mapping with the height function from Proposition \ref{prop:map_height}. 

 We set $t_1 = \frac{1}{8\pi^2}K^2\log P$. By Proposition \ref{prop:flu_ssep} we can give ourselves $s=s_\varepsilon$ such that 
\eqs{\bP_\sigma\left(\exists k,l, \left|\sum_{j=k}^l \left(\sigma_j(t_1) - \frac{P}{K}\right)\right| \ge s\sqrt{P}\right) \le \varepsilon/4.}
Notice that this is equivalent to 
 \eq{eq:max_height}{\Q\left(\max\limits_{k,l\in \T_K}\zeta_k(t_1) - \zeta_l(t_1)  > s \sqrt{P}\right) \le \varepsilon/4.}

We therefore wait a first period of length $t_1$ for the maximal amplitude of $\zeta$ to become small. We then define height functions $\zeta^{(1)}$ and $\zeta^{(2)}$ so that at time $t_1$, they surround $\zeta$ with probability greater than $1-\varepsilon$, as illustrated in Figure \ref{fig:height_coup}. 

\begin{definition}\label{def:init_cond}
Take $\sigma' \sim \pi^{\SSEP}_{K,P}$ independent of $k(0)$ and the Poisson clocks, $s' = \frac{2s}{\varepsilon}$ and $U \sim \mathcal{U}\left(\left[ \entc{ s' \sqrt{P}}+1, 2\entc{ s' \sqrt{P} }\right]\right)$ independent of $k(0)$, $\sigma'$ and the Poisson clocks. Then we set
\eqs{\zeta^{(1)}(t_1) = \Psi(Y(t_1) - U,\sigma') \quad\hbox{ and }\quad\zeta^{(2)}(t_1) = \Psi(Y(t_1) + U,\sigma').}\end{definition}
These height functions satisfy the following property:
\begin{lemma}[Inequality between height functions] \label{lem:ineq_height}
With probability greater than $1-\varepsilon$,
\eqs{\zeta^{(2)}(t_1) \le \zeta(t_1) \le \zeta^{(1)}(t_1).}
\end{lemma}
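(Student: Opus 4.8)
The plan is to reduce the comparison of the three height functions to a single pointwise estimate, and then to control the fluctuations of the true configuration $\sigma(t_1)$ and of the equilibrium sample $\sigma'$ separately, matching both against the lower bound on $U$.

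First I would write out the three height functions explicitly. Since $\Psi(Y,\sigma)_k = -Y + \sum_{j=1}^k(\sigma_j - \tfrac PK)$, set $A_k = \sum_{j=1}^k(\sigma_j(t_1) - \tfrac PK)$ and $B_k = \sum_{j=1}^k(\sigma'_j - \tfrac PK)$, so that
\[
\zeta_k(t_1) = -Y(t_1) + A_k, \qquad \zeta^{(1)}_k(t_1) = -Y(t_1) + U + B_k, \qquad \zeta^{(2)}_k(t_1) = -Y(t_1) - U + B_k .
\]
Both desired inequalities $\zeta^{(2)}(t_1) \le \zeta(t_1) \le \zeta^{(1)}(t_1)$ are then equivalent to the single estimate $|A_k - B_k| \le U$ for every $k \in \T_K$ (at $k=0$ it is trivial since $A_0 = B_0 = 0$). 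Thus the lemma reduces to controlling $\max_k |A_k - B_k|$ by $U$.

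Next I would bound $|A_k|$ and $|B_k|$ by the two fluctuation estimates. The event in \eqref{eq:max_height} gives, with probability $\ge 1-\varepsilon/4$, that the oscillation $\max_{k,l}(\zeta_k(t_1)-\zeta_l(t_1)) = \max_{k,l}(A_k - A_l)$ is at most $s\sqrt P$; since $A_0 = 0$ belongs to this range, this yields $|A_k| \le s\sqrt P$ for all $k$. For $\sigma' \sim \pi^{\SSEP}_{K,P}$ I would invoke the stationary fluctuation bound with threshold $\tilde s := s(2-\varepsilon)/\varepsilon$, which, because $\sum_{j=k+1}^l(\sigma'_j - \tfrac PK) = B_l - B_k$ and $B_0 = 0$, gives $|B_k| \le \tilde s\sqrt P$ for all $k$; its failure probability is $2\exp(-c\tilde s^2) \le \varepsilon/4$, choosing $s=s_\varepsilon$ large enough that both fluctuation estimates fail with probability at most $\varepsilon/4$ (note $\tilde s \ge s$ since $\varepsilon<1$). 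On the intersection of these two events, of probability $\ge 1-\varepsilon/2 \ge 1-\varepsilon$, I then combine the bounds:
\[
|A_k - B_k| \le |A_k| + |B_k| \le (s + \tilde s)\sqrt P = \frac{2s}{\varepsilon}\sqrt P = s'\sqrt P \le \entc{s'\sqrt P} \le U,
\]
using $s + \tilde s = 2s/\varepsilon = s'$, the definition of $s'$, and the deterministic lower bound $U \ge \entc{s'\sqrt P}$ from Definition \ref{def:init_cond}. This is exactly $|A_k - B_k| \le U$, whence the claimed inequality between height functions holds on this event.

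The argument is essentially routine once the reduction to $|A_k - B_k| \le U$ is made; I do not expect a serious obstacle. The only point requiring care is the calibration of constants, i.e. choosing the range of $U$ large enough that $U$ dominates the \emph{sum} of the non-equilibrium fluctuation of $(A_k)$ and the equilibrium fluctuation of $(B_k)$ simultaneously. This is precisely the purpose of the factor $2/\varepsilon$ in the definition $s' = 2s/\varepsilon$, which makes $s + \tilde s = s'$ work out exactly. Note also that the bound $|A_k - B_k| \le U$ holds for every value of $U$ in its range, so the independence of $U$ from the configurations is not even needed for this lemma.
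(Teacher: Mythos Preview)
Your proof is correct and follows essentially the same route as the paper: reduce the ordering of the three height functions to a bound on $|A_k-B_k|$ in terms of $U$, and control $A_k$ and $B_k$ via the fluctuation estimates for $\sigma(t_1)$ and for the stationary sample $\sigma'$. The only difference is cosmetic: you introduce a separate threshold $\tilde s = s(2-\varepsilon)/\varepsilon$ for the equilibrium part so that $s+\tilde s = s'$ exactly, whereas the paper simply applies the same threshold $s$ to both oscillations $\Delta(\zeta(t_1))$ and $\Delta(\zeta^{(1)}(t_1))=\Delta(\zeta^{(2)}(t_1))$ and uses the cruder bound $2s \le 2s/\varepsilon = s' \le U/\sqrt P$.
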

\begin{proof} For any height function $\xi$, we set $\Delta(\xi) = \max\limits_{k,l\in \T_K}\xi_k(t) - \xi_l(t) $ the amplitude of $\xi$. Since $s' \ge 2s_\varepsilon$, 
\begin{align}
\Q\left(\zeta(t_1) \leq \zeta^{(1)}(t_1)\right) & \ge \Q\left(\Delta(\zeta(t_1)) < s \sqrt{P}, \Delta(\zeta^{(1)}(t_1)) < s \sqrt{P}\right). 
\end{align}
So, 
\begin{align*}\Q\left(\zeta(t_1) \not \le \zeta^{(1)}(t_1)\right) &\le \Q\left(\Delta(\zeta(t_1)) \ge s \sqrt{P}\right)+\Q\left(\Delta(\zeta^{(1)}(t_1)) \ge s \sqrt{P}\right)\\
& \le \frac{\varepsilon}{2} \quad \text{by \eqref{eq:max_height},}
\end{align*} 
and we conclude similarly for $\zeta^{(2)}(t_1)$.
\end{proof}
Now, for $t\ge t_1$ we make the height functions evolve simultaneously under the corner-flip dynamics with the coupling $\Q$.
\begin{lemma}[Time to merge] \label{lem:merge}There exists $C(s',\varepsilon)>0$ such that, setting $t_2 = C(s',\varepsilon)K^2$,
\eqs{\Q\left(\zeta^{(1)}(t_1+t_2) = \zeta^{(2)}(t_1+t_2) = \zeta(t_1+t_2)\right) \ge 1- 2\varepsilon}
\end{lemma}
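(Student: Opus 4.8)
The plan is to combine the ordering from Lemma~\ref{lem:ineq_height} with the merging estimate from the monotone coupling, and then account for the independent randomisation introduced in Definition~\ref{def:init_cond}. First I would observe that, on the event where the sandwich $\zeta^{(2)}(t_1) \le \zeta(t_1) \le \zeta^{(1)}(t_1)$ holds (probability at least $1-\varepsilon$ by Lemma~\ref{lem:ineq_height}), the monotone coupling $\Q$ from Proposition~\ref{prop:monot} preserves this ordering for all subsequent times: for every $t \ge 0$ we have $\zeta^{(2)}(t_1+t) \le \zeta(t_1+t) \le \zeta^{(1)}(t_1+t)$ $\Q$-almost surely. Consequently, if at some time the two outer height functions coincide, the middle one is squeezed between them and must coincide too. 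So it suffices to force $\zeta^{(1)}$ and $\zeta^{(2)}$ to merge.

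The key step is then to apply Proposition~\ref{prop:coupl}. By construction in Definition~\ref{def:init_cond}, both $\zeta^{(1)}(t_1) = \Psi(Y(t_1)-U, \sigma')$ and $\zeta^{(2)}(t_1) = \Psi(Y(t_1)+U,\sigma')$ are built from the \emph{same} equilibrium configuration $\sigma' \sim \pi^{\SSEP}_{K,P}$ and differ only through a vertical shift of size $2U$ in the first coordinate. Since $U \le 2\entc{s'\sqrt P}$, the height gap is bounded by a constant multiple of $\sqrt P$, say $H = 2\entc{s'\sqrt P} \le s''\sqrt P$ for an appropriate $s'' = s''(s',\varepsilon)$. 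Proposition~\ref{prop:coupl} therefore supplies a constant $C(s'',\varepsilon)$ such that after an additional time $t_2 = C(s'',\varepsilon)K^2$ (renaming the constant as $C(s',\varepsilon)$ since $s''$ depends only on $s'$ and $\varepsilon$), the two height functions have merged with probability at least $1-\varepsilon$:
\eqs{\Q\left(\zeta^{(1)}(t_1+t_2) \neq \zeta^{(2)}(t_1+t_2)\right) \le \varepsilon.}
Here I would be slightly careful that Proposition~\ref{prop:coupl} is stated for a deterministic shift $H$, whereas $U$ is random and independent of the dynamics; one conditions on the value of $U$ (which lies in the stated range) and integrates, the bound being uniform over admissible shifts.

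Finally I would union-bound the two bad events. With probability at least $1 - \varepsilon$ the sandwich of Lemma~\ref{lem:ineq_height} holds, and conditionally on the coupling the outer functions merge except on an event of probability at most $\varepsilon$; on the complementary good event of probability at least $1-2\varepsilon$ we get $\zeta^{(1)}(t_1+t_2) = \zeta^{(2)}(t_1+t_2)$, which by the preserved ordering forces $\zeta(t_1+t_2)$ to equal them as well. This yields exactly
\eqs{\Q\left(\zeta^{(1)}(t_1+t_2) = \zeta^{(2)}(t_1+t_2) = \zeta(t_1+t_2)\right) \ge 1 - 2\varepsilon,}
as claimed. The main obstacle, such as it is, is the bookkeeping at the randomisation step: one must check that the independent auxiliary configuration $\sigma'$ and the uniform shift $U$ from Definition~\ref{def:init_cond} put $(\zeta^{(1)}, \zeta^{(2)})$ exactly into the hypotheses of Proposition~\ref{prop:coupl} (same equilibrium base configuration, symmetric vertical shift bounded by $\mathcal{O}_\varepsilon(\sqrt P)$), and that the constants $s, s', s''$ are threaded through consistently so that the final constant in $t_2$ depends only on $s'$ and $\varepsilon$.
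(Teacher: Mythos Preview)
Your proposal is correct and follows exactly the paper's own approach: the paper's proof is the one-line statement ``This follows from applying Markov property at time $t_1$ and combining Lemma~\ref{lem:ineq_height} and Propositions~\ref{prop:monot} and~\ref{prop:coupl},'' and you have simply unpacked that into its constituent steps (sandwich ordering, monotone preservation, merging of the outer pair via Proposition~\ref{prop:coupl} after conditioning on the independent $U$ and $\sigma'$, then a union bound). The minor bookkeeping you flag---that Proposition~\ref{prop:coupl} is stated for deterministic $H$ while $U$ is random but bounded uniformly by $2\lceil s'\sqrt P\rceil$---is handled exactly as you describe, and the resulting constant depends only on $s'$ and $\varepsilon$.
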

\begin{proof}
This follows from applying Markov property at time $t_1$ and combining Lemma \ref{lem:ineq_height} and Propositions \ref{prop:monot} and \ref{prop:coupl}. 
\end{proof}

We now know that at time $t=t_1+t_2$, the distribution of $(Y(t),\sigma(t))$ is coupled with the distribution of $\Psi^{-1}(\zeta^{(1)}(t))$ with probability $1-2\varepsilon$. All that remains to show is that the latter distribution, taking the first coordinate modulo $N$, is close to $\mathcal{U}(\T_N) \otimes \pi_{K,P}^\SSEP$. This is the object of the following lemmas.

\begin{lemma}[Distribution of the initial height] \label{lem:init_height}
For all $i \in \{1,2\}$, set $X^{(i)}(t_1) = -\zeta^{(i)}_0(t_1) \mod N$. Then,
\eq{eq:dtv_init_h}{d_{\textsc{tv}}\left(\Q(X^{(i)}(t_1) \in \cdot), \mathcal{U}(\T_N)\right) \le \varepsilon.}
\end{lemma}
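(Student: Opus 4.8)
The plan is to reduce the statement to a \emph{smoothing} estimate for the law of the tagged particle's position, and then control that smoothing with the SSEP density fluctuations of Proposition \ref{prop:flu_ssep}. I treat $i=1$; the case $i=2$ is identical after replacing $-U$ by $+U$. Since $\zeta^{(1)}_0(t_1)=-(Y(t_1)-U)$ by the definition of $\Psi$, we have $X^{(1)}(t_1)=(Y(t_1)-U)\bmod N=(X(t_1)-U)\bmod N$, where $X(t_1)\in\T_N$ is the position of the tagged particle in $\eta(t_1)$ and $U$ is independent of everything else.

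The first and conceptually crucial step is to identify the conditional law of $X(t_1)$. Since $k(0)\sim\mathcal U(\T_K)$ is independent of the FEP noise, and since the FEP particles keep their cyclic order and are never destroyed, the map sending an initial rank to the corresponding particle of $\eta(t_1)$ is, conditionally on the trajectory $(\eta(s))_{s\le t_1}$, a deterministic bijection from $\T_K$ onto the $K$ occupied sites of $\eta(t_1)$. Hence, conditionally on $\eta(t_1)$, the tagged particle is uniform among the $K$ particles, i.e. $X(t_1)\sim\mathcal U(S)$ where $S\subset\T_N$ is the set of occupied sites of $\eta(t_1)$, with $U\perp X(t_1)$ given $\eta(t_1)$. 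Note that $\mathcal U(S)$ is itself \emph{far} from $\mathcal U(\T_N)$ (at distance $(N-K)/N$), so the independent smoothing by $U$ is essential.

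The second step is an elementary convolution count. Writing $H=\T_N\setminus S$ for the $N-K$ empty sites, $M=\#\mathrm{supp}(U)=\entc{s'\sqrt P}+1$, and letting $h(a)$ be the number of sites of $H$ in the window $a+\mathrm{supp}(U)$, one gets that conditionally on $\eta(t_1)$ the law of $X^{(1)}(t_1)$ puts mass $\tfrac1K-\tfrac{h(a)}{KM}$ on $a$, so that, with $\bar h:=M(N-K)/N$,
\[
d_{\textsc{tv}}\big(\Q(X^{(1)}(t_1)\in\cdot\mid\eta(t_1)),\,\mathcal U(\T_N)\big)=\frac1{2KM}\sum_{a\in\T_N}\big|h(a)-\bar h\big|.
\]
Thus everything reduces to showing that the number of empty sites in a sliding window of length $M$ stays within $\mathcal O(s\sqrt P)$ of its mean $\bar h$. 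For this, in the third step, I work on the event $G$ of \eqref{eq:max_height} (probability $\ge 1-\varepsilon/4$, depending only on $\eta(t_1)$): there the number of empty sites among any block of gaps $[k+1,l]$ equals $(l-k)\tfrac{N-K}{K}-\sum_{j=k+1}^l(\sigma_j(t_1)-\tfrac PK)$ by the static mapping, hence deviates from its mean by at most $s\sqrt P$. Converting a spatial window to a block of gaps — a length-$M$ window spans $MK/N+\mathcal O(1)+\mathcal O(s\sqrt P)$ particles on $G$, using that all gaps are $1$ or $2$ — gives $|h(a)-\bar h|\le C_0\,s\sqrt P$ for every window, with $C_0$ absolute. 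Plugging this in, using $M\ge s'\sqrt P=\tfrac{2s}{\varepsilon}\sqrt P$ and $N\le 2K$, adding the contribution $\Pr(\text{not }G)\le\varepsilon/4$ of the bad event (where the conditional distance is $\le 1$), and averaging over $\eta(t_1)$ by convexity of total variation yields the claimed bound $d_{\textsc{tv}}\le\varepsilon$ (enlarging the constant in $s'$ if necessary).

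The hard part is the third step's passage from particle-index windows, which is what Proposition \ref{prop:flu_ssep} controls directly, to the spatial windows of fixed length $M$ appearing in the convolution, since a fixed spatial length corresponds to a fluctuating number of gaps; this is exactly where ergodicity (gaps bounded by $2$) is combined with the fluctuation bound. The conceptual key, however, is the first step: recognizing that the otherwise opaque law of $Y(t_1)\bmod N$ is simply the uniform law on the occupied sites of $\eta(t_1)$, which turns the lemma into a clean smoothing estimate.
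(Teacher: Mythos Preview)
Your argument is correct and follows essentially the same route as the paper's: both use that $X(t_1)$ is uniform on the occupied sites of $\eta(t_1)$ (Proposition~\ref{prop:dist_k}), smooth by the independent shift $U$, and control the resulting window occupations via the FEP/SSEP density correspondence (Proposition~\ref{prop:link_fep_ssep}) together with Proposition~\ref{prop:flu_ssep}. The only difference is packaging---the paper bounds each point mass $\PP(X^{(i)}(t_1)=x)=\tfrac{1}{K|I|}\E[|\eta_{|I}(t_1)|]$ directly via an expectation, whereas you condition on $\eta(t_1)$ and bound the conditional total variation on the good event $G$---but the underlying estimates are identical.
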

We postpone the proof of Lemma \ref{lem:init_height} to Section \ref{subsec:init-h}. 

\begin{lemma} \label{lem:lawt1}
For all $t \ge t_1$, set $(Y^{(i)}(t),\sigma^{(i)}(t) ) = \Psi^{-1}\left(\zeta^{(i)}(t)\right)$, and recall that $X^{(i)}(t) = Y^{(i)}(t) \mod N$. Then, for all $t \ge t_1$,
\eqs{d_{\textsc{tv}}\left(\Q\left((X^{(i)}, \sigma^{(i)})(t) \in \cdot \right), \mathcal{U}(\T_N) \otimes \pi^{\SSEP}_{K,P}\right) \le \varepsilon.}
\end{lemma}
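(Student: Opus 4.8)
The plan is to reduce the claim to two facts already available: the near-uniformity of the initial height (Lemma \ref{lem:init_height}) and the invariance of $\mathcal{U}(\T_N) \otimes \pi^{\SSEP}_{K,P}$ under the chain $(X,\sigma)$ (Proposition \ref{prop:stat_meas_map}). The decisive observation is that at the single time $t_1$ the pair $(X^{(i)}(t_1), \sigma^{(i)}(t_1))$ already has \emph{product} law, not merely matching marginals; once this is established the bound for all later times comes for free by contractivity of the total-variation distance.

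First I would unwind the definitions at time $t_1$. Since $\Psi$ is a bijection onto $\Omega_{K,P}$, applying $\Psi^{-1}$ to $\zeta^{(i)}(t_1) = \Psi(Y(t_1) \mp U, \sigma')$ yields $\sigma^{(i)}(t_1) = \sigma'$ and $Y^{(i)}(t_1) = Y(t_1) \mp U$, hence $X^{(i)}(t_1) = (Y(t_1) \mp U) \bmod N$. Now comes the crucial point: by Definition \ref{def:init_cond}, $\sigma' \sim \pi^{\SSEP}_{K,P}$ was chosen independent of everything, in particular independent of the pair $(Y(t_1), U)$; therefore $\sigma^{(i)}(t_1) = \sigma'$ is independent of $X^{(i)}(t_1)$, which is a function of $(Y(t_1), U)$. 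Consequently the joint law of $(X^{(i)}(t_1), \sigma^{(i)}(t_1))$ factorises as $\Q(X^{(i)}(t_1) \in \cdot) \otimes \pi^{\SSEP}_{K,P}$. Using the elementary identity $d_{\textsc{tv}}(\mu \otimes \nu, \mu' \otimes \nu) = d_{\textsc{tv}}(\mu, \mu')$ together with Lemma \ref{lem:init_height}, this product law is at total-variation distance at most $\varepsilon$ from $\mathcal{U}(\T_N) \otimes \pi^{\SSEP}_{K,P}$, which proves the statement at $t = t_1$.

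Next I would propagate this to all $t \ge t_1$. By Proposition \ref{prop:map_height} read in reverse, the corner-flip evolution of $\zeta^{(i)}$ is exactly the evolution of $(Y^{(i)}(t), \sigma^{(i)}(t))$, where $\sigma^{(i)}$ is a SSEP and $Y^{(i)}$ records the current through the origin; reducing the first coordinate modulo $N$ (which commutes with the $\pm 1$ increments of the current) shows that $(X^{(i)}(t), \sigma^{(i)}(t))_{t \ge t_1}$ is precisely the Markov chain with generator $\mathcal{L}^{\hbox{\tiny c},\SSEP}$. Since $\mathcal{U}(\T_N) \otimes \pi^{\SSEP}_{K,P}$ is invariant for this generator by Proposition \ref{prop:stat_meas_map}, the total-variation distance of the chain's law to this fixed invariant measure is non-increasing in $t$; combining this with the time-$t_1$ bound yields $d_{\textsc{tv}}\!\left(\Q((X^{(i)},\sigma^{(i)})(t) \in \cdot), \mathcal{U}(\T_N) \otimes \pi^{\SSEP}_{K,P}\right) \le \varepsilon$ for every $t \ge t_1$.

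The only genuinely substantive step is recognising the independence at time $t_1$: the construction in Definition \ref{def:init_cond} was engineered precisely so that the SSEP coordinate is a fresh draw from equilibrium, independent of the (approximately uniform) height, which upgrades "correct marginals" to a genuine product law. Everything afterwards — the product-measure total-variation identity and the semigroup contraction — is standard, so I expect no real difficulty there.
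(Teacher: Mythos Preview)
Your proposal is correct and follows essentially the same route as the paper: the paper's proof is exactly the independence of $X^{(i)}(t_1)$ and $\sigma^{(i)}(t_1)$ from Definition \ref{def:init_cond} combined with Lemma \ref{lem:init_height} to get the bound at $t_1$, and then the monotonicity of total variation for the Markov chain $(X^{(i)},\sigma^{(i)})$ with invariant law $\mathcal{U}(\T_N)\otimes\pi^{\SSEP}_{K,P}$. You have simply made explicit the product-law step and the product-measure total-variation identity that the paper leaves implicit.
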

\begin{proof}
By Lemma \ref{lem:init_height} and the independence of $X^{(i)}(t_1)$ and $\sigma^{(i)}(t_1)$ (see Definition \ref{def:init_cond}), we have $d_{\textsc{tv}}\left(\Q\left((X^{(i)}, \sigma^{(i)})(t_1) \in \cdot\right), \mathcal{U}(\T_N) \otimes \pi_{K,P}^{\SSEP}\right) \le \varepsilon$. 
Then, under $\Q$, $(X^{(i)}(t), \sigma^{(i)}(t))_{t \ge t_1}$ is distributed as the joint law of a SSEP and its current started from $(X^{(i)}(t_1), \sigma^{(i)}(t_1))$. Since this is a Markov chain, the total variation distance to its invariant measure does not increase. 
\end{proof}

Combining Lemmas \ref{lem:merge} and \ref{lem:lawt1} yields the following proposition.
\begin{proposition}[Distance of $(X(t), \sigma(t))$ to equilibrium] \hspace{1cm}\\ For all $t\ge t_1 + t_2$, 
\eqs{d_{\textsc{tv}}\left(\textnormal{\bP}_{(X(0),\sigma)}\left((X(t), \sigma(t)) \in \cdot\right), \mathcal{U}(\T_N )\otimes \pi_{K,P}^{\SSEP}\right) \le 3\varepsilon.}
\end{proposition}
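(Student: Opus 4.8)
The plan is a three-process coupling argument closed off by the triangle inequality, assembling the lemmas just proved. Recall that $(X(t),\sigma(t))$ is encoded by the height function $\zeta(t) = \Psi(Y(t),\sigma(t))$ with $X(t) = Y(t) \mod N$, while the auxiliary process $(X^{(1)}(t),\sigma^{(1)}(t)) = \Psi^{-1}(\zeta^{(1)}(t))$ (first coordinate reduced modulo $N$) was built so that its law stays within $\varepsilon$ of $\mathcal{U}(\T_N)\otimes\pi_{K,P}^{\SSEP}$ for all $t \ge t_1$, by the preceding lemma. It therefore suffices to bound the probability that $(X(t),\sigma(t))$ and $(X^{(1)}(t),\sigma^{(1)}(t))$ disagree, and then compare the latter to equilibrium.

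First I would record that on the merge event $\mathcal{M} = \{\zeta^{(1)}(t_1+t_2) = \zeta^{(2)}(t_1+t_2) = \zeta(t_1+t_2)\}$, which has $\Q$-probability at least $1-2\varepsilon$ by the Time to merge lemma, the three height functions in fact coincide at \emph{all} later times. This is where the monotone coupling $\Q$ of Proposition \ref{prop:monot} is essential: the ordering $\zeta^{(2)}(t) \le \zeta(t) \le \zeta^{(1)}(t)$ is preserved for every $t \ge t_1$, and once the top and bottom coincide at time $t_1+t_2$ they remain equal afterwards (being driven by the same corner-flip clocks), squeezing the middle so that $\zeta(t) = \zeta^{(1)}(t)$ for all $t \ge t_1+t_2$. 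Applying the bijection $\Psi^{-1}$ and reducing the first coordinate modulo $N$ then gives $(X(t),\sigma(t)) = (X^{(1)}(t),\sigma^{(1)}(t))$ on $\mathcal{M}$, for every $t \ge t_1+t_2$.

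The coupling inequality then yields, for all $t \ge t_1+t_2$,
\eqs{d_{\textsc{tv}}\left(\Q((X(t),\sigma(t)) \in \cdot), \Q((X^{(1)}(t),\sigma^{(1)}(t)) \in \cdot)\right) \le \Q\big((X(t),\sigma(t)) \neq (X^{(1)}(t),\sigma^{(1)}(t))\big) \le \Q(\mathcal{M}^c) \le 2\varepsilon.}
Combining this with the previous lemma, which bounds $d_{\textsc{tv}}\left(\Q((X^{(1)},\sigma^{(1)})(t) \in \cdot),\, \mathcal{U}(\T_N)\otimes\pi_{K,P}^{\SSEP}\right)$ by $\varepsilon$, the triangle inequality produces the stated bound $3\varepsilon$. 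Since $\sigma = \sigma(0)$ and $\bP_{(X(0),\sigma)}$ is exactly the marginal law of the first process under the coupling, this is the claim.

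I expect no analytic obstacle, as all the quantitative work lives in Propositions \ref{prop:flu_ssep}--\ref{prop:coupl} and the preceding lemmas. The only point requiring genuine care is the persistence of the merge for all $t \ge t_1+t_2$: the Time to merge lemma is stated at a single time, so the equality $(X(t),\sigma(t)) = (X^{(1)}(t),\sigma^{(1)}(t))$ at later times must be derived from the graphical/monotone structure of $\Q$ (order preservation plus coincidence of the extreme height functions) rather than invoked directly.
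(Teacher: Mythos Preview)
Your proof is correct and follows the same approach as the paper: couple $(X(t),\sigma(t))$ with the auxiliary near-equilibrium process $(X^{(1)}(t),\sigma^{(1)}(t))$ via the merged height functions, bound the discrepancy by $\Q(\mathcal{M}^c)\le 2\varepsilon$, and close with the $\varepsilon$-bound for the auxiliary process. You are in fact more explicit than the paper about why the merge at time $t_1+t_2$ persists for all later times (same corner-flip clocks under $\Q$), a point the paper leaves implicit.
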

\begin{proof}
Let $A \subset \T_N \times \Gamma_{K,P}$ and recall that $\nu_{N,K}^{\hbox{\tiny c}, \SSEP} = \mathcal{U}(\T_N) \otimes \pi^{\SSEP}_{K,P}$.
 \begin{align}
 \bP_{(X,\sigma)}\!\left((X(t),\sigma(t)) \in A\right)  &\le \bP_{(X,\sigma)}\!\left((X^{(1)}(t),\sigma^{(1)}(t)) \in A\right) + \Q\left(\zeta^{(1)}(t) \neq \zeta(t)\right) \nonumber  \\
 &\le  \nu_{N,K}^{\hbox{\tiny c}, \SSEP}(A) + \varepsilon + 2 \varepsilon.
 \end{align}

\end{proof}

We conclude the proof of Proposition \ref{prop:UB}, by showing that \eqs{\forall t \ge \frac{K^2}{8\pi^2}\log P + C_{\varepsilon} K^2, \quad d_{\textsc{tv}}\left(\PP_\eta(\eta(t) \in \cdot), \pi_{N,K}^{\FEP}\right) \le \varepsilon.} 
Let $A \subset \mathcal{E}_{N,K}$ and 
\eqs{t \ge t_1+t_2(\varepsilon/3)=\frac{K^2}{8\pi^2}\log P + C( s'_{\varepsilon/3}, \varepsilon/3) K^2,} with $C(s,\varepsilon)$ from Proposition \ref{prop:coupl}. Let $(X^U, \sigma^U) \sim \mathcal{U}(\T_N)\otimes \pi^{\SSEP}_{K,P}$ and $(k^U, \eta^U) \sim \mathcal{U}(\T_K)\otimes \pi^{\FEP}_{N,K}$, recall by Proposition \ref{prop:stat_meas_map} that $\Phi(k^U, \eta^U)$ has the same law as $(X^U, \sigma^U)$. Then:
\begin{align}
\PP_\eta(\eta(t) \in A) &= \PP_{(k(0),\eta)}\big((k(t), \eta(t))\in \T_K\times A\big) \nonumber\\
&= \bP_{(X(0),\sigma)}\big((X(t), \sigma(t))\in \Phi(\T_K\times A)\big) \nonumber\\
&\le \bP\left((X^U, \sigma^U)\in \Phi(\T_K\times A)\right) + \varepsilon \nonumber\\
&= \PP\left((k^U, \eta^U) \in \T_K \times A\right) + \varepsilon \nonumber\\
&= \pi^{\FEP}_{N,K}(A) + \varepsilon,
\end{align}
which finishes the proof.
\subsection{Distribution of the initial height} \label{subsec:init-h}
We now prove Lemma \ref{lem:init_height}. We will only show \eqref{eq:dtv_init_h} for $X^{(2)}(t_1) = Y(t_1) + U \mod N$, since the proof is the same for $Y(t_1) - U$. We will need this useful property.
 \begin{proposition}[Distribution of the first coordinates] \label{prop:dist_k}
 If $k(0) \sim \mathcal{U}(\T_K)$ and is independent from $(\eta(t))_{t\ge 0}$, then for all $t \ge 0$,
 \begin{itemize}
 \item $k(t)\sim \mathcal{U}(\T_K)$ and is independent from $\eta(t)$.
 \item Conditionally on $\eta(t)$, $X(t)$ is uniformly distributed over the occupied sites of $\eta(t)$.
 \end{itemize}
 \end{proposition}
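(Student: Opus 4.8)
The plan is to prove both bullet points by a single symmetry argument, exploiting the rotational invariance of the dynamics together with the independence of the initial rank $k(0)$. The key structural fact is that the FEP generator $\mathcal{L}_N^{\FEP}$ commutes with the cyclic rotation $\theta$ of $\T_N$, i.e. $(\theta \eta)_x = \eta_{x-1}$: the dynamics treat all sites symmetrically, so the law of the FEP is invariant under relabelling of the lattice by a rotation. I would first observe that $k(t) = \sum_{y=1}^{X(t)} \eta_y(t)$ is, by construction, the rank of the tagged particle at all times, and that the dynamic mapping ensures $k(t)$ is obtained from $k(0)$ by adding the signed current through $(N,1)$ modulo $K$. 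The crux is that $k(0)$ is uniform on $\T_K$ and independent of $(\eta(t))_{t\ge 0}$, so adding any random integer (the current) that is a function of the $\eta$-trajectory alone still leaves $k(t)$ uniform and independent of $\eta(t)$, since a uniform variable on $\Z/K\Z$ plus an independent integer is again uniform and independent.

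Concretely, for the first bullet I would argue as follows. The tagged-particle dynamics is built so that, given the whole trajectory $(\eta(s))_{0\le s\le t}$, the rank $k(t)$ equals $k(0) + J(t) \bmod K$, where $J(t)$ is the current through $(N,1)$, a deterministic functional of the $\eta$-trajectory. Since $k(0)\sim \mathcal{U}(\T_K)$ is independent of the $\eta$-trajectory, conditioning on $(\eta(s))_{s\le t}$ leaves $k(t)$ uniform on $\T_K$; integrating out the trajectory, $k(t)$ is uniform and — because its conditional law given the entire $\eta$-trajectory does not depend on that trajectory — independent of $\eta(t)$. For the second bullet, note that conditionally on $\eta(t)$, the position $X(t)$ is the deterministic image $x_{k(t)}(\eta(t))$ of the rank under $\eta(t)$; since $k(t)$ is uniform on $\T_K$ and independent of $\eta(t)$, and since $j \mapsto x_j(\eta(t))$ is a bijection from $\T_K$ onto the $K$ occupied sites of $\eta(t)$, the pushforward of the uniform law on ranks is the uniform law on the occupied sites.

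The main obstacle is making the conditioning argument rigorous, because $X(t)$ and $k(t)$ are defined through the joint generator $\mathcal{L}^{\FEP,tag}$ rather than as a naive functional of an autonomous $\eta$-process. I would therefore want to verify carefully that the joint process can indeed be constructed by first running the FEP $(\eta(s))_{s\le t}$ (whose marginal law does not depend on the tagged particle) and then reading off the rank as $k(t)=k(0)+J(t)\bmod K$; this is exactly what the dynamic mapping proposition established, so the independence of $k(0)$ from the driving noise of $\eta$ can be invoked directly. One must also check the base case $t=0$: by hypothesis $k(0)\sim\mathcal{U}(\T_K)$ is independent of $\eta(0)=\eta$, and $X(0)=x_{k(0)}(\eta)$ is uniform over the occupied sites of $\eta$, so both properties hold initially and are preserved by the argument above for all $t$.
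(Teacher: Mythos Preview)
Your argument is correct. The paper actually states this proposition without proof, so there is nothing to compare against; your reasoning fills the gap cleanly. The essential step --- that $k(t)\equiv k(0)+J(t)\pmod K$ with $J(t)$ the current through $(N,1)$, a deterministic functional of the configuration trajectory alone --- is exactly the observation recorded in the paper just before the dynamic mapping proposition, and combined with the assumed independence of $k(0)$ from the \emph{whole} trajectory $(\eta(s))_{s\ge 0}$ it yields both conclusions immediately: a uniform on $\Z/K\Z$ shifted by an independent integer is again uniform and independent of that integer (and of anything measurable with respect to it), and pushing forward by the bijection $j\mapsto x_j(\eta(t))$ gives the uniform law on occupied sites.

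Two minor remarks. First, the rotational invariance of $\mathcal{L}_N^{\FEP}$ that you announce in your opening sentence is never actually used and can be dropped; the argument runs purely on the current identity and the independence hypothesis. Second, your worry about the joint generator $\mathcal{L}^{\FEP,tag}$ is easily settled by noting that the $\eta$-marginal of the tagged-particle process coincides with the autonomous FEP (the tag is passive and does not affect which jumps occur), so one may indeed first realise $(\eta(s))_{s\ge 0}$ and then reconstruct $X(t)$ and $k(t)$ deterministically from $k(0)$ and the trajectory, exactly as you propose.
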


Let $x \in \T_N$. Let $U \sim \mathcal{U}\left(\left[\entc{s'\sqrt{P}}+1, 2\entc{s'\sqrt{P}}\right]\right)$ as in Definition \ref{def:init_cond}, independent of $(k(t_1),\eta(t_1))$. Recall that we consider segments as clockwise modulo $N$ intervals (or modulo $K$ in the context of $\T_K$). We perform a first computation on the law of the initial height at $t_1$ taken modulo $N$.
\begin{align*}
\PP\big(Y(t_1) + U &= x \mod N\big) = \PP\big(X(t_1) + U = x \mod N\big)   \\
&= \!\sum_{u = \entc{s'\sqrt{P}}+1}^{2\entc{s'\sqrt{P}}} \sum_{k = 1}^K \PP\big(k(t)=k, U=u, x_k(\eta(t_1)) = x-u \!\mod N\big).
\end{align*}
Since $U \perp\!\!\!\perp k(t_1)$ and $\left(U,k(t_1)\right)\perp\!\!\!\perp \eta(t_1)$,
\begin{align}
\PP\big(Y(t_1) + U &= x \mod N\big) \nonumber\\
&=\frac{1}{K}\frac{1}{\entc{s'\sqrt{P}}}\sum_{u = \entc{s'\sqrt{P}}+1}^{2\entc{s'\sqrt{P}}} \sum_{k = 1}^K \PP\big(x_k(\eta(t_1)) = x-u \mod N\big)  \nonumber \\
&= \frac{1}{K}\frac{1}{\entc{s'\sqrt{P}}} \sum_{u = \entc{s'\sqrt{P}}+1}^{2\entc{s'\sqrt{P}}}  \PP\big(\eta_{x-u}(t_1) = 1\big) \nonumber \\
&= \frac{1}{K}\frac{1}{\entc{s'\sqrt{P}}} \E\left[|\eta_{|[x-2\entc{s'\sqrt{P}},\, x-\entc{s'\sqrt{P}}-1]}(t_1)|\right] \nonumber \\
&= \frac{1}{K}\frac{1}{|I|} \E\left[|\eta_{|I}(t_1)|\right], \label{eq:probI}
\end{align}
where $I = \big[x-2\entc{s'\sqrt{P}}, x-\entc{s'\sqrt{P}}-1\big]$ and $|\eta_{|I}|$ denotes the number of particles of $\eta$ in $I$. We have connected the law of $X(t_1)+U\mod N$ to the expected density of particles in a segment of the FEP. We now show that this quantity can be formulated in the SSEP representation.

\begin{proposition}[Link between particle density in the SSEP and the FEP] \label{prop:link_fep_ssep}
For all $s, P \ge 1$,
\begin{multline}
\PP_\eta\left(\exists \hbox{ a segment } I \subset \T_N,  \left||\eta_{|I}(t_1) | - \frac{K}{N} |I|\right| \ge s \frac{K}{N} \sqrt{P}\right) \\ \le \textnormal{\bP}_\sigma\left(\exists k,l, \left|\sum_{j=k}^{l} \left(\sigma_j(t_1) - \frac{P}{K} \right) \right|\ge s\sqrt{P} -1\right).
\end{multline}
\end{proposition}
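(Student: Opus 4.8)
The plan is to deduce the statement from a \emph{pathwise} comparison carried along the coupling furnished by the dynamic mapping. Fix a rank $k_0 \in \T_K$ and set $\sigma = \sigma^{(k_0,\eta)}$, so that $\bP_\sigma$ is precisely the law in the right-hand side. Run the joint dynamics $(X(t),\eta(t))$ started from $(x_{k_0}(\eta),\eta)$ and let $(X(t),\sigma(t)) = \Phi(k(t),\eta(t))$ be its image. By the properties of the dynamic mapping the marginal law of $(\eta(t))$ is $\PP_\eta$ while that of $(\sigma(t))$ is $\bP_\sigma$. It therefore suffices to prove the deterministic implication: for every ergodic configuration and its image $\sigma(t)=\sigma^{(k(t),\eta(t))}$, the existence of a segment $I \subset \T_N$ with $\big||\eta_{|I}(t)| - \frac{K}{N}|I|\big| \ge s\frac{K}{N}\sqrt{P}$ forces the existence of $k,l\in\T_K$ with $\big|\sum_{j=k}^{l+1}(\sigma_j(t)-\frac{P}{K})\big| \ge s\sqrt{P}-1$.

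The heart of the argument is an exact identity for \emph{particle-aligned} segments. Writing $x_{k(t)+l}(\eta(t)) = X(t)+\sum_{j=1}^{l}(2-\sigma_j(t))$ from \eqref{eq:sigma_mapping}, I would consider the clockwise segment $\tilde I = (x_{k(t)+l_1},x_{k(t)+l_2}]$, whose occupied sites are exactly the particles of ranks $k(t)+l_1+1,\dots,k(t)+l_2$. Then $|\eta_{|\tilde I}(t)| = l_2-l_1$ while $|\tilde I| = \sum_{j=l_1+1}^{l_2}(2-\sigma_j(t))$. Substituting and using $P=2K-N$, hence $1-\frac{2K}{N} = -\frac{P}{N}$ and $2-\frac{P}{K} = \frac{N}{K}$, a direct computation collapses the boundary-free case to
\eqs{|\eta_{|\tilde I}(t)| - \frac{K}{N}|\tilde I| = \frac{K}{N}\sum_{j=l_1+1}^{l_2}\Big(\sigma_j(t)-\frac{P}{K}\Big).}

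It then remains to reduce an arbitrary segment to a particle-aligned one. Given a segment $I$ realising the left-hand fluctuation, let $k(t)+l_1+1,\dots,k(t)+l_2$ be the clockwise-contiguous ranks of the particles it contains and let $\tilde I$ be the associated particle-aligned segment, which carries the same particles, so $|\eta_{|I}(t)| = |\eta_{|\tilde I}(t)|$. Because $\eta(t)$ is ergodic, all gaps between consecutive particles equal $1$ or $2$, so the endpoints of $I$ and $\tilde I$ differ by at most one site on each side and $\big||I|-|\tilde I|\big| \le 1$; since $\frac{K}{N}\le 1$, the centred count changes by at most $\frac{K}{N}\le 1$, whence $\big||\eta_{|\tilde I}(t)| - \frac{K}{N}|\tilde I|\big| \ge s\frac{K}{N}\sqrt{P}-\frac{K}{N}$. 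Dividing the identity above by $\frac{K}{N}$ and taking $k=l_1+1$, $l=l_2-1$ yields $\big|\sum_{j=k}^{l+1}(\sigma_j(t)-\frac{P}{K})\big| \ge s\sqrt{P}-1$, as required.

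The only delicate point is this last bookkeeping of the boundary: one must verify that replacing $I$ by $\tilde I$ costs at most one unit of length, which is exactly what produces the $-1$ in the statement and no worse constant, and that a clockwise arc of $\T_N$ does select a clockwise interval of ranks in $\T_K$. The degenerate cases where $I$ contains no particle or all $K$ particles are harmless: there the left-hand fluctuation is at most $\frac{K}{N}\le 1\le s\sqrt{P}$, so either the left event cannot occur or the threshold $s\sqrt{P}-1$ is non-positive and the right event holds trivially.
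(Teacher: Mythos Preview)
Your proof is correct and follows essentially the same route as the paper's: both arguments exploit the exact identity between the centred particle count on a ``nice'' FEP segment and the centred sum over the corresponding SSEP segment, then reduce an arbitrary segment to a nice one at a cost of $1$ in the threshold. The only cosmetic difference is the choice of nice segments --- you take half-open intervals $(x_{l_1},x_{l_2}]$ ending at a particle and adjust both endpoints simultaneously (noting that the two endpoint corrections enter the length with opposite signs, so $\big||I|-|\tilde I|\big|\le 1$), whereas the paper takes intervals starting at a particle and only explicitly adjusts the left endpoint. Your boundary bookkeeping is in fact slightly cleaner.
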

\begin{proof}
Notice that a segment $J$ of size $j$ in $\sigma(t_1)$ corresponds to $j$ consecutive particles in $\eta(t_1)$, which are contained in a segment of size $\sum_{k\in J} 2 - \sigma_k(t_1)$. Indeed, if there is a particle at one site in $\sigma(t_1)$, it means there is no space between the corresponding particle in $\eta(t_1)$ and its right neighbour, so this corresponding particle occupies a space of 1 in $\eta(t_1)$. If there is no particle at a site of $\sigma(t_1)$, it means the corresponding particle in $\eta(t_1)$ is followed by an empty site in $\eta(t_1)$, so it occupies 2 spaces in $\eta(t_1)$. 

So given a segment $J$ in the SSEP, there is a corresponding segment in the FEP, starting with a particle, of length $\sum_{k\in J} \left(2-\sigma_j(t_1)  \right)$. Now, replacing $P$ by $2K-N$, notice that:

\begin{multline}\bP_\sigma\left(\exists k, l : \left|\sum_{j=k}^{l} \left(\sigma_j(t_1) - \frac{2K-N}{K} \right) \right|\ge s\sqrt{P} \right) \\ = \bP_\sigma\left(\exists k, l, \left| \frac{N}{K}(l-k +1) - \sum_{j=k}^{l} \left(2 - \sigma_j(t_1)\right)\right| \ge s\sqrt{P}\right),
\end{multline}
which is  therefore the probability that in the FEP, there exists a segment $I$ starting with a particle such that $\left| |\eta_{|I}(t_1)| - \frac{K}{N} |I| \right| \ge s\frac{K}{N} \sqrt{P}$. 

If there exists a segment $I = [x,y]$ starting with an empty site
such that $\left| |\eta_{|I}(t_1)| - \frac{K}{N} |I| \right| \ge s \frac{K}{N}\sqrt{P}$, then by ergodicity, the segment $I' = [x+1, y]$ starts with a particle, and we have $\left| |\eta_{|I'}(t_1)| - \frac{K}{N} |I'| \right| \ge \frac{K}{N} (s\sqrt{P}-1)$. So, 
\begin{align}
&\,\PP_\eta\left(\exists \hbox{ a segment } I, \left| |\eta_{|I}(t_1)| - \frac{K}{N} |I| \right| \ge s\frac{K}{N} \sqrt{P}\right)
 \nonumber\\
\le &\,\PP_\eta\bigg(\exists \hbox{ a segment } I \hbox{ starting with a particle }, \nonumber\\
& \qquad\qquad\qquad\qquad\qquad\qquad \left| |\eta_{|I}(t_1)| - \frac{K}{N} |I| \right| \ge \frac{K}{N}(s\sqrt{P}-1)\bigg) \nonumber\\
=& \,\bP_\sigma\left(\exists \hbox{ a segment }J \subset \T_K, \left|\sum_{k\in J}\left(\sigma_k(t_1) - \frac{P}{K}\right)\right| \ge s\sqrt{P}-1\right).
 \end{align}

\end{proof}
\begin{remark} \label{rem:sigmaprime}
For the case when $N-K \le 2K-N$, the same result can be shown by considering $\sigma' = 1-\sigma$ and $P' = N-K$. 
\end{remark}
By Proposition \ref{prop:flu_ssep}, we can choose $s$ such that 
\eqs{\bP_\sigma\left(\exists k, l : \left|\sum_{j=k}^{l} \left(\sigma_j(t_1) - \frac{P}{K} \right)  \right|\ge s\sqrt{P} - 1\right) \le \frac{\varepsilon}{4}.} 
Then, by Proposition \ref{prop:link_fep_ssep}, for all segment $I$ in $\T_N$, 
\eq{eq:prob_seg_eta}{\PP_\eta\left(|\eta_{|I}(t_1)| \ge |I|\frac{K}{N} + s\frac{K}{N}\sqrt{P} \right)\le\frac{\varepsilon}{4}.} We conclude the computation of the law of $X^{(2)}(t_1)$ by taking, for all $x \in \T_N$, $I = \big[x-2\entc{s'\sqrt{P}}, x - \entc{s'\sqrt{P}}-1\big]$, and applying \eqref{eq:probI} and \eqref{eq:prob_seg_eta}:
\begin{align}
\PP\big(X^{(2)}(t_1) + U = x\mod N\big) &= \frac{1}{K}\frac{1}{|I|} \E\left[|\eta_{|I}(t_1)|\right] \nonumber\\
&\le \frac{1}{K}\frac{1}{|I|}\left(|I| \frac{K}{N}  + s \frac{K}{N}\sqrt{P} + \frac{\varepsilon}{4} |I| \right)\nonumber\\
&= \frac{1}{N}  + \frac{1}{N} \frac{s \sqrt{P}}{|I|} + \frac{\varepsilon}{4K}\nonumber\\
&\le \frac{1}{N}  + \frac{1}{N} \frac{s}{s'} + \frac{\varepsilon}{2N}. 
\end{align}
Setting $s' = \frac{2}{\varepsilon} s$, we have $\PP\big(X(t_1) + U = x\big) \le \frac{1}{N} + \frac{\varepsilon}{N}$, and therefore $d_{\textsc{tv}}\big(\PP(X(t_1) + U \in \cdot), \mathcal{U}(\T_N)\big) \le \varepsilon$.

\subsection{Upper bound when the number of particles is close to the edges}\label{subsec:UBc}
Assume now that for all $N$, $K \le N/2 + a_N$ or $K \ge N-a_N$.
We will prove Proposition \ref{prop:UBc}, following a similar proof strategy as for Proposition \ref{prop:UB}: we aim to find a time such that height functions have merged. However, we are in a regime where the height functions have much smaller amplitude as before, so we will not wait a first period of time before attempting the coupling. 
To prove Proposition \ref{prop:UBc}, we will show an upper bound as a function of $P$. \begin{proposition} \label{prop:UBc2}
For all $\varepsilon \in (0,1)$, for all $N,K$ such that $N/2 < K < N$ and $\min(2K-N,N-K)\le 2a_N$, there exists $C_\varepsilon > 0$ such that
\eq{eq:propUBc2}{\tau^{\FEP}_{\mathcal{E}_{N,K}}(\varepsilon) \le  C_\varepsilon (KP)^2,}
where $P=\min(2K-N,N-K)$.
\end{proposition} 
This bound is less precise than Proposition \ref{prop:UB} when $P$ is large, but still holds if $P$ is bounded. 

We are still in the setting described at the beginning of Section \ref{sec:u}, 
with $\eta(0) \in \mathcal{E}_{N,K}$ and $k(0)$ the initial rank of the tagged particle uniform in $\T_K$. We study $(X(t),\sigma(t))$ as defined in \eqref{eq:defXsigma}, and consider as in Section \ref{subsec:coup} $Y(0)=X(0)$ and $Y(t)-Y(0)\in\Z$ the total current through the origin in $\left(\sigma(s)\right)_{s\le t}$. We consider the height function $\zeta(t) = \Psi\left(Y(t),\sigma(t)\right)$ as defined in Section \ref{subsec:hf}. We wish to show that after a time $C_\varepsilon (KP)^2$, $\zeta(t)$ has probably been merged with height functions $\zeta^{(1)}$ and $\zeta^{(2)}$ which are initially close to equilibrium.

\paragraph{Initial height} We introduce height functions $\zeta^{(1)}$ and $\zeta^{(2)}$ and show that, for $i\in\{1,2\}$, $\Psi^{-1}(\zeta^{(i)})$ with the first coordinate taken modulo $N$ is close to $\mathcal{U}(\T_N) \otimes \pi_{K,2K-N}^\SSEP$.
\begin{definition}
Take $\sigma' \sim \pi_{K,P}^\SSEP$ independent of $k(0)$ and the Poisson clocks, and take $U \sim \mathcal{U}\left(\left[2P+1, 2P + \left\lceil \frac{4P}{\varepsilon}\right\rceil\right]\right)$ independent of $k(0)$, the Poisson clocks and $\sigma'$. Then we set
\eqs{\zeta^{(1)}(0) = \Psi(Y(0) - U, \sigma') \quad \text{and} \quad \zeta^{(2)}(0) = \Psi(Y(0) + U, \sigma').}
Almost surely,
\eq{eq:ineq-zeta-zero}{\zeta^{(2)}(0) \le \zeta(0) \le \zeta^{(1)}(0).}
\end{definition}
The inequality \eqref{eq:ineq-zeta-zero} comes from the fact that for any height function $\xi \in \Omega_{K,P}$, its maximum amplitude $\Delta(\xi)$ is less than $P$. 

We now show a result analogous to Lemma \ref{lem:init_height} on the distributions of $-Y(0) + U\mod N$ and $-Y(0) - U\mod N$.
\begin{lemma}[Distribution of the initial height] \label{lem:init_height2}
For all $i \in \{1,2\}$, set $X^{(i)}(0) = -\zeta^{(i)}_0(0) \mod N$. Then,
\eq{eq:dtv_init_h2}{d_{\textsc{tv}}\left(\Q(X^{(i)}(0) \in \cdot), \mathcal{U}(\T_N)\right) \le 3\varepsilon/4.}
\end{lemma}

\begin{proof}
By the same proof as for Equation \eqref{eq:probI}, for all $x\in \T_N$, 
\eqs{\PP\big(Y(0)+U = x \mod N\big) = \frac{1}{K}\frac{1}{|I|} \E\left[|\eta_{|I}(0)|\right],} where $I = \big[x-2P-\left\lceil \frac{4P}{\varepsilon}\right\rceil, x-2P-1\big]$.

If $K \le N/2 + a_N$, the density of particles in the FEP should be close to $\frac12$ everywhere. More precisely, for all segment $J$ of length greater than $P$, the maximal number of particles in $J$ is $P + \entc{(|J| - P)/2}$: this is the case if $P$ particles are grouped together and the rest of the configuration alternates between empty sites and particles. Then, since $\frac{1}{K}\le \frac{2}{N}$:
\begin{align} 
\PP(Y(0)+U = x \mod N) &\le \frac{2}{N} \frac{1}{|I|}(P + 1+ (|I| - P)/2)\nonumber \\
& \le \frac{1}{N} + \frac{3\varepsilon}{4N}.
\end{align}
If $K \ge N-a_N$, the density of particles in the FEP is close to 1. In particular, for every segment $J$ of length greater than $2P$, it contains at least $|J|-P$ particles: this happens if all the $P$ empty sites are in $J$. Then, since $\frac{1}{K}\ge \frac{1}{N}$:
\begin{align}
\PP(Y(0)+U = x \mod N) &\ge \frac{1}{N} \frac{1}{|I|}(|I| - P)\nonumber \\
& \ge \frac{1}{N} - \frac{\varepsilon}{4N}.
\end{align}
\end{proof}

By the same proof as for Lemma \ref{lem:lawt1}, we obtain for all $t\ge 0$:
 \eq{eq:loiPpetit}{d_{\textsc{tv}}\left(\left(X^{(2)}(t),\zeta^{(2)}(t)\right),\mathcal{U}(\T_N) \otimes \pi_{K,P}^\SSEP\right) \le \frac{3\varepsilon}{4}.}

\paragraph{Time to couple the height functions}
Let us compute the time for $\zeta^{(1)}$ and $\zeta^{(2)}$ to merge. 
Our approach follows the ideas of \cite{lacoin_simple_2017}, but we look for a coarser estimate so we do not need such a precise argument. Recall the coupling is defined with upwards and downwards clocks at each point of the grid.
Let 
\eqs{\mathcal{A}(t) = \sum_{k\in \T_K} \left(\zeta^{(1)}_k(t) - \zeta^{(2)}_k(t)\right)} the area between the two height functions. At each jump, $\mathcal{A}$ may increase or decrease by 1, and it is absorbed at zero. 
Each local maximum of $\zeta^{(1)}$ that is not merged with $\zeta^{(2)}$ can decrease $\mathcal{A}$ at rate 1 by flipping downwards. Each local minimum of $\zeta^{(1)}$ that is not merged with $\zeta^{(2)}$ can increase $\mathcal{A}$ at rate 1 by flipping upwards. A similar reasoning holds for the corners of $\zeta^{(2)}$. 
If we denote by $\vee$ a local minimum and $\wedge$ a local maximum of a height function, and write that a local minimum (or maximum) centered in $k$ belongs to $\zeta^{(1)}\cap \zeta^{(2)}$ if $(\zeta^{(1)}_{k-1},\zeta^{(1)}_k,\zeta^{(1)}_{k+1}) = (\zeta^{(2)}_{k-1},\zeta^{(2)}_{k},\zeta^{(2)}_{k+1}) $,
one can check that the rate at which $\mathcal{A}$ increases by 1 is:
\eqs{|\{\vee \text{ of } \zeta^{(1)}\}| - |\{\vee \text{ of } \zeta^{(1)}\cap\zeta^{(2)}\}|+ |\{\wedge \text{ of } \zeta^{(2)}\}|  - |\{\wedge \text{ of } \zeta^{(1)}\cap\zeta^{(2)}\}|,}
and the rate at which $\mathcal{A}$ decreases by 1 is:
\eqs{|\{\wedge \text{ of } \zeta^{(1)}\}| - |\{\wedge \text{ of } \zeta^{(1)}\cap\zeta^{(2)}\}|+ |\{\vee \text{ of } \zeta^{(2)}\}|  - |\{\vee \text{ of } \zeta^{(1)}\cap\zeta^{(2)}\}|.}

For any height function of $\Omega_{K,P}$, the number of $\wedge$ is equal to the number of $\vee$, so the jump rates of $\mathcal{A}$ are symmetric. As noticed in \cite[Section 6]{lacoin_simple_2017}, $\mathcal{A}(t)$ is thus a time change of a simple random walk on $\Z$ absorbed at zero. One can therefore couple $\mathcal{A}$ with a continuous time random walk $Z$, started at $\mathcal{A}(0)$, that jumps left or right at rate 1 and is absorbed at zero, such that, for all $t$, 
\eqs{\Q\left(\mathcal{A}(t) > 0\right) \le \Q\left(Z(t) > 0\right).} 
Since $\mathcal{A}(0) \le 12 KP /\varepsilon$, there exists $C_\varepsilon$ such that 
\eqs{\Q\left(Z\left(C_\varepsilon (KP)^2\right)> 0\right) <\varepsilon/4,}
so at this time, the area has probably reached zero, and therefore $\zeta^{(1)},\zeta^{(2)}$ and $\zeta$ have probably merged.  

Combining this with \eqref{eq:loiPpetit} and Proposition \ref{prop:stat_meas_map} allows us to conclude that 
\eqs{\tau^{\FEP}_{\mathcal{E}_{N,K}}(\varepsilon) \le  C_\varepsilon (KP)^2.}

Let us stress that the proof from \cite{lacoin_simple_2017} is much more involved: here, we compared a martingale that jumps at rate between 2 and $4P$ to a random walk that jumps at rate 2, which was sufficient for our purpose because $P$ is small. In the context of \cite{lacoin_simple_2017} $P$ is large, so this simple comparison is not precise enough to obtain a sharp bound on the coupling time, and a multi-scale argument is developed to control precisely the jump rates and show that the merging occurs fast. This is in fact the result that we used in Proposition \ref{prop:coupl} for the case where $P$ is large.

\section{Proof of the lower bound }\label{sec:b}
In this Section, we set $P = \min(2K-N,N-K)$ and assume that $P\to \infty$. It is the minimum between the number of holes and the number of particles in the SSEP representation of size $K$, in particular we always have $P\le K/2$. The proof is formulated for the case where $2K-N \le N-K$. To show the lower bound when $2K-N > N-K$, simply exchange the role of particles and holes in the SSEP representations. We fix a specific initial configuration 
\eq{eq=eta_lb}{\bar{\eta} = \underbrace{\bullet \bullet ... \,\bullet}_{2K-N}\underbrace{\bullet\circ\bullet\circ ...\bullet\circ}_{2(N-K)}.} 
Our aim is to show that for $t < \frac{1}{8 \pi^2} K^2 \log P - C_\varepsilon K^2$, there is an event that has small probability under the invariant measure but occurs with higher probability for $\bar{\eta}(t) $.
To define this event, let us introduce some notation. For all $\eta \in \mathcal{E}_{N,K}$ and $k\in \T_K$, denote by $\Phi^{(k)}(\eta) \in \Gamma_{K,P}$ the second coordinate of $\Phi(k,\eta)$. 
We define the following functions from $\Gamma_{K,P}$ to $\R$: 
\eqs{\begin{cases}\varphi_1:\sigma \mapsto \sum_{k\in \T_K} \sigma_k \cos\left(\frac{2\pi k}{K}\right) \\
\psi_1:\sigma\mapsto \sum_{k\in \T_K} \sigma_k \sin\left(\frac{2\pi k}{K}\right). \\
\end{cases}}

These are eigenfunctions of the generator $\mathcal{L}_{K,P}^\SSEP$, associated to the eigenvalue $-\lambda_1$, where
\eqs{\lambda_1 = 2\left(1-\cos\left(\frac{2\pi}{K}\right)\right).}
We now define our event, by setting for all $s> 0$
\eqs{A_s = \left\{\eta \in \mathcal{E}_{N,K} : \exists k, \, \varphi_1\left(\Phi^{(k)}(\eta)\right)  > s \sqrt{P}\right\}.}

Then, we will show the following lemma:
\begin{lemma} For all $0<\varepsilon<1$, there exist $C_{\varepsilon}>0$ and $s=s_\varepsilon >0 $ such that for $N$ large enough, 
for all $t \le \frac{K^2}{8\pi^2} \log P - C_\varepsilon K^2$, 
\eq{eq:piA1}{\pi_{N,K}^\FEP(A_s) \le \varepsilon'}
\eq{eq:2eps1}{\PP_{\bar{\eta}}(\bar{\eta}(t) \in A_s) > \varepsilon+\varepsilon',}
where $\varepsilon' = \min(\varepsilon, \frac{1-\varepsilon}{2})$.
\end{lemma}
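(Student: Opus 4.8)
The plan is to prove the two estimates separately, exploiting the density-fluctuation machinery already developed for the SSEP via the mapping $\Phi$ and the height-function representation $\Psi$.

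For the stationary bound \eqref{eq:piA}, I would translate the event $A_s$ through the mapping. By the static mapping and Proposition \ref{prop:link_fep_ssep}, an excess in the FEP particle count over a segment $I$ corresponds to an excess in the quantity $\sum_{k\in J}(\sigma_k - \tfrac{P}{K})$ over the associated SSEP segment $J$. Since $\Phi$ sends the stationary FEP measure $\mathcal{U}(\T_K)\otimes\pi^{\FEP}_{N,K}$ to $\mathcal{U}(\T_N)\otimes\pi^{\SSEP}_{K,P}$ (Proposition \ref{prop:stat_meas_map}), the $\pi^{\FEP}_{N,K}$-probability of $A_s$ is controlled by a stationary SSEP density-fluctuation probability. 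Then the Proposition on \emph{Fluctuations of the density in the stationary SSEP} gives a bound $2\exp(-cs^2)$ (up to the harmless $\pm1$ shift from Proposition \ref{prop:link_fep_ssep}), so choosing $s=s_\varepsilon$ large enough makes $\pi^{\FEP}_{N,K}(A_s)<\varepsilon'$. This step should be essentially immediate.

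For the dynamic lower bound \eqref{eq:2eps}, the idea is that the chosen initial configuration $\eta$ from \eqref{eq=eta_lb} places a large block of $P=2K-N$ full sites, which under the mapping corresponds to a very atypical, highly concentrated SSEP profile: all $P$ particles of $\sigma(0)$ sit in one block. I would pass to the height function $\zeta(0)=\Psi(Y(0),\sigma(0))$, which has a macroscopic height difference of order $P$ (a ``wedge'' of height $\asymp P\cdot(1-\tfrac{P}{K})\asymp P$). The corner-flip dynamics smooths this profile only diffusively, so up to time $t\approx \tfrac{1}{4\pi^2}K^2\log\tfrac{K}{\sqrt P}$ the height difference—equivalently, $\max_{k,l}|\sum_{j=k+1}^l(\sigma_j(t)-\tfrac{P}{K})|$—remains at least $\asymp s\sqrt P$ with high probability. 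Translating back through Proposition \ref{prop:link_fep_ssep}, this means $\eta(t)\in A_s$ with probability exceeding $\varepsilon+\varepsilon'$.

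The main obstacle is the dynamic lower bound: I must show the initial surplus persists for the full logarithmic time, and the natural tool is the slowest Fourier mode of the corner-flip (lattice-Laplacian) dynamics, whose relaxation rate is $\asymp \pi^2/K^2$. Concretely I would project $\zeta(t)$ onto the lowest eigenfunction of the discrete Laplacian on $\T_K$, compute that this projection is a scalar process decaying in expectation like $e^{-(\text{const})\,\pi^2 t/K^2}$ while its variance stays bounded by $O(t)=O(K^2\log\tfrac{K}{\sqrt P})$, and invoke a second-moment (Chebyshev) argument: at time $t<\tfrac{1}{4\pi^2}K^2\log\tfrac{K}{\sqrt P}-C_\varepsilon K^2$ the mean is still $\gg$ the standard deviation and $\gg s\sqrt P$, forcing a large density fluctuation. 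The delicate points are getting the constant $\tfrac{1}{4\pi^2}$ sharp—which comes from matching the $e^{-2\pi^2 t/K^2}$ decay of the \emph{variance} of the lowest mode against the initial amplitude of order $P$—and ensuring the fluctuation detected at the level of $\sigma(t)$ indeed forces membership in $A_s$ via the $\pm1$-robust estimate of Proposition \ref{prop:link_fep_ssep}.
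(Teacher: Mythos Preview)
Your plan for \eqref{eq:piA} is exactly what the paper does: push $A_s$ through the mapping and invoke the stationary SSEP fluctuation bound. For \eqref{eq:2eps} you are in the same spirit as the paper---a spectral argument on the mapped SSEP, governed by the first Laplacian eigenvalue on $\T_K$---but the execution differs, and a few of your details are off.

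The paper does \emph{not} work with the height function here. It takes the statistic $|\sigma_{|J}(t)|$ with $J=[1,P]$ (the number of SSEP particles still in the initial block), observes that $u_k(t):=\E_\sigma[\sigma_k(t)]$ solves the discrete heat equation exactly, and writes $\E_\sigma[|\sigma_{|J}(t)|]-P^2/K = K\sum_{l\ge 1} c_l^2 e^{-\lambda_l t}\ge Kc_1^2 e^{-\lambda_1 t}$ with $\lambda_1=2(1-\cos\tfrac{2\pi}{K})\sim 4\pi^2/K^2$. Concentration is then obtained by Cantelli's inequality together with negative dependence of the $\sigma_k(t)$ (so $\mathrm{Var}(|\sigma_{|J}(t)|)\le \E[|\sigma_{|J}(t)|]$). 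This statistic is convenient because it \emph{is} a density fluctuation over an interval, so no conversion step is needed to land in $A_s$; the paper only needs the lower-bound analogue of Proposition~\ref{prop:link_fep_ssep} (note: the proposition you cite goes the wrong way---it bounds the FEP probability from above---and the paper states and proves the reverse inequality separately).

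Your height-function/Fourier-mode route can be made to work, but two points need fixing. First, the variance of the lowest Fourier mode of $\zeta(t)$ does not grow like $O(t)$: that projection is an Ornstein--Uhlenbeck type process whose variance saturates (it depends only on $\sigma(t)$, which lives in a compact space), so the relevant scale is the equilibrium fluctuation size $\asymp\sqrt{P}$, not $t$. Second, the decay rate is $e^{-\lambda_1 t}$ with $\lambda_1\sim 4\pi^2/K^2$ (not $2\pi^2/K^2$), and it is the \emph{mean} of the mode, not its variance, that decays at this rate; the constant $\tfrac{1}{4\pi^2}=1/\lambda_1$ then drops out from solving $e^{-\lambda_1 t}\cdot(\text{initial amplitude}) \asymp s\sqrt{P}$. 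If you repair these, you will also need an extra step to pass from ``large Fourier coefficient of $\zeta$'' to ``some interval with surplus $\ge s\sqrt{P}$'', which the paper's choice of statistic avoids entirely.
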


The proof will rely on the fact that the event $A_s$ is invariant by translation, and can therefore be directly expressed as an event on the SSEP. Then the study of the event on the SSEP follows the same ideas as \cite[Section 2]{lacoin_simple_2017}, namely exploiting eigenfunctions to lower-bound the mixing time. 
\paragraph{Properties of the eigenfunctions} Since $\varphi_1$ and $\psi_1$ are eigenfunctions of $\mathcal{L}_{K,P}^\SSEP$ with eigenvalue $-\lambda_1$, for all probability $\mu$ on $\Gamma_{K,P}$, for all $t\ge 0$, for all $\chi_1 \in \{\varphi_1,\psi_1\}$,
\eq{eq:eig_gen}{\mathbf{E}_\mu\left[\chi_1\left(\sigma(t)\right)\right] = e^{-\lambda_1}\mathbf{E}_\mu\left[\chi_1\left(\sigma(0)\right)\right] ,}
with $\bP_\mu\left(\sigma(t)\in\cdot\right)$ being the distribution of a SSEP at time $t$ conditionally to $\sigma(0) \sim \mu$. We will also use an estimate of the variance of these functions taken from \cite[Section 2.2]{lacoin_simple_2017}. For any initial configuration $\sigma \in \Gamma_{K,P}$, for all $K\ge 3$, $t\ge 0$ and $\chi_1\in\{\varphi_1,\psi_1\}$, 
\eq{eq:var_eig}{\mathrm{Var}_\sigma\big(\chi_1\left(\sigma(t)\right)\big) \le 2P.}
This estimate is proved for $\varphi_1$ in \cite[Section 2.2]{lacoin_simple_2017}, using that the maximal jump rate of the process is $2P$ and bounding the quadratic variation of the martingale $\varphi_1\left(\sigma(t)\right)$. The exact same proof can be done for $\psi_1$.
\paragraph{Upper bound for the stationary measure} We first show we can find $s = s_\varepsilon$ such that \eqref{eq:piA1} holds. 
For all $\sigma\in\Gamma_{K,P}$, $k \in \T_K$, define $\tau^k \sigma = \left(\sigma_{k+l}\right)_{l\in \T_K}$ the translation of $\sigma$ by $k$.
Then, for all $\eta\in\mathcal{E}_{N,K}$,
\eqs{\{\Phi^{(k)}\left(\eta\right),k\in \T_K\} = \{\tau^k\Phi^{(1)}\left(\eta\right),k\in \T_K\} .}
Next, for all $\sigma\in\Gamma_{K,P}$,  $k\in \T_K$, by a direct computation,
\eqs{\varphi_1(\tau^k \sigma) = \cos\left(\frac{2\pi k}{K}\right) \varphi_1(\sigma) + \sin\left(\frac{2\pi k}{K}\right) \psi_1(\sigma).}
This enables us to express $A_s$ as an event on only one SSEP. For $\eta \sim \pi_{N,K}^\FEP$, let $\sigma = \Phi^{(1)}(\eta)$, then:
\begin{align}
\eta\in A_s &\iff \exists k, \varphi_1(\tau^k \sigma) > s\sqrt{P} \\
&\iff \exists k, \cos\left(\frac{2\pi k}{K}\right) \varphi_1(\sigma) + \sin\left(\frac{2\pi k}{K}\right) \psi_1(\sigma) > s\sqrt{P}.
\end{align}
For $\eta \sim \pi_{N,K}^\FEP$, $\sigma:= \Phi^{(1)}(\eta)$ has distribution $\pi_{K,P}^\SSEP$,  therefore
\eqs{\pi_{N,K}^\FEP(A_s) = \pi_{K,P}^\SSEP\left( \exists k, \cos\left(\frac{2\pi k}{K}\right) \varphi_1(\sigma) + \sin\left(\frac{2\pi k}{K}\right) \psi_1(\sigma) > s\sqrt{P}\right).}We upper-bound the latter probability, by noticing that if
there exists $k$ such that $ \cos\left(\frac{2\pi k}{K}\right) \varphi_1(\sigma) + \sin\left(\frac{2\pi k}{K}\right) \psi_1(\sigma) > s\sqrt{P}$, then $|\varphi_1(\sigma)|$ or $|\psi_1(\sigma)|$ is greater than $ s\sqrt{P}/2$.
\eqs{\pi^{\FEP}_{N,K}(A_s) \le \pi^\SSEP_{K,P}\left(|\varphi_1(\sigma)|\ge s\sqrt{P}/2\right) + \pi^\SSEP_{K,P}\left(|\psi_1(\sigma)|\ge s\sqrt{P}/2\right).} 
Under $\pi_{K,P}^\SSEP$, the expectations of $\varphi_1(\sigma)$ and $\psi_1(\sigma)$ are zero (this can be shown by applying \eqref{eq:eig_gen} with $\mu = \pi_{K,P}^\SSEP$). Taking $t$ to infinity in \eqref{eq:var_eig}, for $\chi_1\in \{\varphi_1,\psi_1\}$, 
\eqs{\mathrm{Var}_{\pi_{K,P}^\SSEP}\left(\chi_1(\sigma)\right) \le 2P.}
Therefore, by Chebyshev's inequality, for $\chi_1\in \{\varphi_1,\psi_1\}$,\eqs{\pi_{K,P}^\SSEP\left(|\chi_1(\sigma)|\ge s\sqrt{P}/2\right)  \le 8/s^2 .}
By choosing  $s_\varepsilon \ge 4/\sqrt{\varepsilon'}$, we obtain $\pi_{N,K}^\FEP(A_s)\le \varepsilon'$.
\paragraph{Lower bound for $\bar{\eta}(t)$} We now show that, for all $s$, we can find $C_\varepsilon>0$ such that for all $t \le \frac{K^2}{8\pi^2}\log P-C_\varepsilon K^2$ and for $N$ large enough, \eqref{eq:2eps1} holds. 
Set $\bar{\sigma}=\Phi^{(\entc{P/2}+1)}\left(\bar{\eta}\right)$, then
\eqs{\bar{\sigma} = \un_{ \left[-\entc{P/2}+1,\ent{P/2}\vphantom{\frac12}\right] },}
with the interval being a clockwise modulo $K$ interval.
For all $t\ge 0$, set $r(t)$ the rank in $\bar{\eta}(t)$ of the tagged particle that had initially rank $\entc{P/2}+1$ in $\bar{\eta}$, and $\bar{\sigma}(t)=~\Phi^{\left(r(t)\right)}\left(\bar{\eta}(t)\right)$, then $\bar{\sigma}(t)$ is a SSEP at time $t$ started from $\bar{\sigma}$. If $\varphi_1\left(\bar{\sigma}(t)\right) > s\sqrt{P}$, then $\eta(t)\in A_s$. Let us now study $\varphi_1\left(\bar{\sigma}(t)\right)$: by \eqref{eq:eig_gen}, for all $t\ge 0$,
\eq{eq:eig1}{\mathbf{E}_{\bar{\sigma}}\left[\varphi_1(\bar{\sigma}(t))\right] = e^{-\lambda_1 t}\varphi_1(\bar{\sigma}(0)).}
We compute $\varphi_1(\bar{\sigma}(0))$:
\eqs{\varphi_1(\bar{\sigma}(0)) = \frac{\sin(\pi P/K)}{\sin(\pi/K)}\left(\un_{\{P \text{ is odd}\}} + \cos(\pi/K)\un_{\{P \text{ is even}\}} \right) .}
One can check by analysing the function that for all $K$, if $P\le K/2$, 
\eq{eq:phi10}{\varphi_1(\bar{\sigma}(0))\ge P/2.} 
For all $C<\sqrt{P}/2$, at time $t_C = \frac{1}{\lambda_1}\log \big(\sqrt{P}/(2 C)\big)$, combining \eqref{eq:eig1} and \eqref{eq:phi10} yields that \eqs{\mathbf{E}_{\bar{\sigma}}\left[\varphi_1\left(\bar{\sigma}(t_C)\right)\right] \ge C\sqrt{P }.}
We wish to choose $C>s$ so that the expectation of $\varphi_1(\bar{\sigma}(t_C))$ is greater than  $s\sqrt{P}$, which will help making the event $A_s$ more probable. However we also require that $C<\sqrt{P}/2$ so that $t_C>0$ and \eqref{eq:eig1} makes sense. This is why we need that $P\to \infty$: under this assumption, we are free to choose $C = C_\varepsilon$ as large as we want, and for $N$ large enough the argument will be valid. 
We thus take $C > s$, and using \eqref{eq:var_eig} to estimate the variance, we can apply Cantelli's inequality for all $t \le t_C$.
\begin{align}
\PP_{\bar{\eta}}\left(\bar{\eta}(t) \in A_s\right) &\ge \bP_{\bar{\sigma}}\left(\varphi_1(\bar{\sigma}(t))> s\sqrt{P}\right) \nonumber\\
&\ge \bP_{\bar{\sigma}}\left(\varphi_1(\bar{\sigma}(t)) - \mathbf{E}_{\bar{\sigma}}\left[\varphi_1(\bar{\sigma}(t))\right] >-(C-s)\sqrt{P}\right) \nonumber\\
&\ge \frac{(C-s)^2 P}{(C-s)^2 P + \mathrm{Var}_{\bar{\sigma}}\left(\varphi_1(\bar{\sigma}(t))\right)} \nonumber\\
&\ge \frac{1}{1+2/(C-s)^2}.
\end{align}
Choosing $C > s+\sqrt{\frac{2}{\frac{1}{\varepsilon+\varepsilon'}-1}}$ ensures that
\eq{eq:lbprob}{\PP_{\bar{\eta}}(\bar{\eta}(t) \in A_s) > \varepsilon+\varepsilon'.}
Last, since $\lambda_1 = \frac{4\pi^2}{K^2} + \mathcal{O}(1)$, there exists $C_\varepsilon>0$ such that $t_C\ge \frac{K^2}{8\pi^2}\log P - C_\varepsilon K^2$, so for all $t \le\frac{K^2}{8\pi^2}\log P - C_\varepsilon K^2 $, \eqref{eq:2eps1} is verified.

\section*{Acknowledgements} The author would like to thank Paul Chleboun, Clément Erignoux and Cristina Toninelli for helpful comments and discussions. The author also thanks the anonymous referees, whose useful suggestions helped improving the clarity of the article. 

\bibliographystyle{unsrt}
\bibliography{bib_mix}

\end{document}